\documentclass[a4paper,11pt]{amsart} 

\usepackage{amscd,amssymb}
\usepackage{mathrsfs}
\usepackage{bm}
\usepackage{color}
\usepackage{multirow,bigdelim}
\usepackage[all]{xy}
\usepackage{hyperref}
\usepackage{tikz}
\usepackage{enumitem}

\usepackage{geometry}
\newtheorem{thm}{Theorem}
\newtheorem{lem}[thm]{Lemma}

\newtheorem{prop}[thm]{Proposition}

\newtheorem*{ack}{Acknowledgement}
\theoremstyle{definition}
\newtheorem{defin}[thm]{Definition}

\newtheorem*{notation}{Notation}
\newtheorem*{organization}{Organization of the paper}

\theoremstyle{remark}
\newtheorem{rem}[thm]{Remark}

\numberwithin{equation}{section}

\newcommand{\A}{\mathbb{A}}

\newcommand{\F}{\mathbb{F}}

\newcommand{\bP}{\mathbb{P}}

\newcommand{\Q}{\mathbb{Q}}
\newcommand{\R}{\mathbb{R}}

\newcommand{\Z}{\mathbb{Z}}

\newcommand{\Cl}{\operatorname{Cl}}

\newcommand{\NE}{\overline{\operatorname{NE}}}

\newcommand{\Spec}{\operatorname{Spec}}
\newcommand{\Supp}{\operatorname{Supp}}

\begin{document}

\title[]{Polarized cylinders on blow-ups of weighted projective planes}


\author{}
\address{}
\curraddr{}
\email{}
\thanks{}

\author{In-Kyun Kim}
\address{June E Huh Center for Mathematical Challenges, Korea Institute for Advanced Study, 85, Hoegiro Dongdaemun-gu, Seoul 02455, Republic of Korea }
\email{soulcraw@kias.re.kr}

\author{Masatomo Sawahara}
\address{Faculty of Education, Hirosaki University, Bunkyocho1 , Hirosaki-shi, Aomori 036-8560, JAPAN}
\email{sawaharam@hirosaki-u.ac.jp}

\author{Joonyeong Won}
\address{Department of Mathematics, Ewha Womans University, 52, Ewhayeodae-gil, Seodaemun-gu, Seoul, 03760, Republic of Korea} 
\email{leonwon@ewha.ac.kr}

\keywords{}

\date{}

\dedicatory{}

\begin{abstract}
We study polarized cylinders in certain rational surfaces arising from blow-ups of weighted projective planes. In particular, we consider the surfaces obtained by blowing up $m+4$ points in general position on the weighted projective plane $\mathbb{P}(1,1,m)$. These surfaces appear naturally as weighted hypersurfaces or quasi-smooth complete intersections. 
\end{abstract}

\maketitle
\setcounter{tocdepth}{1}

Throughout this article, all considered varieties are assumed to be algebraic and defined over
an algebraically closed field of characteristic $0$.
\section{Introduction.}
The existence of polarized cylinders in Fano varieties has attracted considerable attention in recent years due to its close connection with additive group actions on affine cones and flexibility properties of affine varieties. In particular, anticanonically polarized cylinders, namely $(-K_X)$-polar cylinders, play an important role in understanding the geometry of Fano varieties and the structure of their associated affine cones \cite{CPW16a,CPW16b,CPW17,DKKW26,KPZ11,KPZ13,KPZ14,KKW25,KKW252,MW18}. The presence of such cylinders often reflects strong geometric properties of the underlying variety and provides an effective bridge between birational geometry and affine algebraic geometry.

Let $X$ be a normal projective variety. An open subset $U \subseteq X$ is called a \emph{cylinder} if
$
U \cong \mathbb{A}^1 \times Z
$
for some variety $Z$. Let $H$ be an ample $\mathbb{Q}$-divisor on $X$. We say that a cylinder $U$ is an \emph{$H$-polar cylinder} if there exists an effective $\mathbb{Q}$-divisor $D$ on $X$ such that
$
D \sim_{\mathbb{Q}} H
$
and
$
U=X\setminus \Supp(D).
$
The study of polarized cylinders is strongly motivated by the investigation of additive group actions on affine cones.

\begin{thm}[{\cite[Theorem 3.1.9]{KPZ11}, \cite[Theorem 2.1]{KPZ14}}]
Let $X$ be a normal projective variety and let $H$ be an ample divisor on $X$. Then the affine cone
\[
\Spec \bigoplus_{n\ge0} H^0(X,\mathcal{O}_X(nH))
\]
admits a non-trivial $\mathbb{G}_a$-action if and only if $X$ contains an $H$-polar cylinder.
\end{thm}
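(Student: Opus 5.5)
We prove the two implications separately, working with the graded ring $A=\bigoplus_{n\ge0}H^0(X,\mathcal{O}_X(nH))$ and its affine cone $V=\Spec A$. Since $H$ is ample, $A$ is finitely generated and $X\cong\operatorname{Proj}A$. The grading gives a $\mathbb{G}_m$-action on $V$, and the punctured cone $V\setminus\{0\}$ maps to $X$ with general fibre $\mathbb{G}_m$. A $\mathbb{G}_a$-action on $V$ is the same as a nonzero locally nilpotent derivation $\partial$ of $A$.

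\emph{Cylinder $\Rightarrow$ $\mathbb{G}_a$-action.} Replace $H$ by a multiple, so that the cylinder is $U=X\setminus\Supp(D)$ with $D\in|dH|$ an integral effective divisor, $D=\operatorname{div}(s)$ for some $s\in A_d$. Then $U=\operatorname{Spec}A_{(s)}$ is affine, and the cylinder structure $U\cong\mathbb{A}^1\times Z$ gives a nonzero locally nilpotent derivation $\partial_0=\partial/\partial t$ of $A_{(s)}=\mathcal{O}(Z)[t]$. Extend $\partial_0$ to the localization $A_s=A_{(s)}[s,s^{-1}]$ as a homogeneous derivation of degree $0$ with $\partial_0(s)=0$; it remains locally nilpotent. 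The generators of $A$ are finitely many, so multiplying by a high power $s^N$ (which lies in the kernel) gives $s^N\partial_0$, which maps $A$ into $A$. Since $s$ is in the kernel, $s^N\partial_0$ is still locally nilpotent, and it is nonzero, so $V$ carries a $\mathbb{G}_a$-action. The technical point is that $V$ need not be normal in general. One can either pass to the section ring, which is normal because $X$ is normal, or simply note that the clearing-denominators argument works for any finitely generated $A$.

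\emph{$\mathbb{G}_a$-action $\Rightarrow$ cylinder.} This is the harder direction. Start from a nonzero locally nilpotent derivation $\partial$ of $A$. First replace $\partial$ by a nonzero \emph{homogeneous} locally nilpotent derivation: take the highest (or lowest) degree component of $\partial$ with respect to the grading. The standard argument shows this component is again locally nilpotent, and it is nonzero. Next, the kernel $\ker\partial$ is a graded subalgebra of transcendence degree $\dim V-1\ge1$, so it contains a nonconstant homogeneous element. Choose a homogeneous local slice $r$, that is, $\partial r\ne0$ and $\partial^2 r=0$, and set $s=\partial r\in\ker\partial$, which is homogeneous of degree $d>0$. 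The slice theorem gives $A_s=(\ker\partial)_s[r]$. Taking degree-$0$ parts, together with the fact that $\ker\partial$ is graded, we obtain $A_{(s)}\cong B[\,r^{d}/s^{e}\,]$ for a suitable homogeneous combination (adjusting by a weighted-degree bookkeeping), with $B$ the degree-$0$ part of $(\ker\partial)_s$. Hence $X\setminus\operatorname{div}(s)\cong\mathbb{A}^1\times\operatorname{Spec}B$, and this is an $H$-polar cylinder with $D=\frac1d\operatorname{div}(s)$.

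The main obstacle is this last degree-$0$ extraction. In the graded slice description, $r$ has degree $\deg s-\deg\partial$, and when this degree is nonzero one must find a degree-$0$ polynomial generator. The plan here is to pass to a Veronese subring or use the $\mathbb{G}_m\times\mathbb{G}_a$ quotient structure to produce a degree-zero variable, possibly after modifying $s$. One must also check that $\operatorname{Spec}B$ is a variety, which follows from the normality and finite generation inherited from $A$.
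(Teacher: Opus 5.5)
The paper gives no proof of this statement; it is quoted from Kishimoto--Prokhorov--Zaidenberg, so your argument has to stand on its own. Your architecture is the standard one. However, both directions break at the same point: the passage between the $\Z$-graded ring $A_s$ and its degree-zero part $A_{(s)}=\mathcal{O}(X\setminus\Supp D)$ when $d=\deg s>1$.

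\emph{Cylinder $\Rightarrow$ action.} The identity $A_s=A_{(s)}[s,s^{-1}]$ holds only when $d=1$. The right-hand side lives in degrees divisible by $d$, while $A_s\supseteq A$ and $A_n\neq0$ for all $n\gg0$.

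In general you cannot arrange $d=1$. Take $\bP^2\setminus(C\cup T)\cong\A^1\times\A^1_{\ast}$, with $C$ a conic, $T$ a tangent line and $H$ a line (cf.\ the proof of Lemma~\ref{lem(3)}). Every admissible $D$ is $aC+bT$ with $2a+b=1$ and $a,b>0$, so $d\ge3$, while $x\in A_s$ has degree $1$.

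What your construction actually produces is an LND of the Veronese ring $A^{(d)}$, i.e.\ a $\mathbb{G}_a$-action on the cone over $(X,dH)$; ``replace $H$ by a multiple'' changes the cone. The missing step is lifting $\partial_0$ from $A_{(s)}[s^{\pm1}]=(A_s)^{\mu_d}$ to $A_s$. This is not formal: $d/dx$ on $k[x]$ does not extend to $k[\sqrt{x}]$.

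It can be done as follows. Since $H$ is Cartier in codimension one on the normal variety $X$, $\mathbb{G}_m$ acts freely in codimension one on $V\setminus\{0\}$. Hence the quotient $\Spec A_s\to\Spec A_{(s)}[s^{\pm1}]$ by $\mu_d$ is étale in codimension one. The unique extension of $\partial_0$ to $\operatorname{Frac}(A_s)$ therefore preserves every $(A_s)_{\mathfrak p}$ with $\operatorname{ht}\mathfrak p=1$, and so preserves $A_s$ by normality. It is locally nilpotent by Vasconcelos' theorem. Alternatively, $\Cl(\A^1\times Z)\cong\Cl(Z)$ gives $H|_U=p^{\ast}M+\operatorname{div}(f)$, hence $A_s\cong\bigl(\bigoplus_nH^0(Z,\mathcal{O}_Z(nM))\bigr)[t]$ with $\deg t=0$.

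\emph{Action $\Rightarrow$ cylinder.} Here the gap is the one you flag, but it is the heart of the proof, and the proposed formula is wrong. Take $A=k[x,y]$, $\partial=\partial/\partial x$, $r=xy^2$, $s=y^2$. Then $A_{(s)}=k[x/y]$, whereas every degree-zero monomial $r^a/s^b$ is a power of $(x/y)^2$. The generator $x/y=r\,y^{-3}$ uses a unit of $K_s$ ($K=\ker\partial$) that is not a power of $s$.

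In general $A_{(s)}=\bigoplus_{j\ge0}(K_s)_{-je}\,r^j$, and the $B$-modules $(K_s)_{-je}$ need not be free. So $X\setminus\Supp\operatorname{div}(s)$ is only an $\A^1$-fibration over $\Spec B$ (for Cartier $H$, a possibly nontrivial line bundle), and further divisors must be removed.

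One way to finish:
\begin{enumerate}
\item The degrees of homogeneous elements of $K$ generate $\Z$. Otherwise some $\mu_m\subset\mathbb{G}_m$ with $m>1$ acts trivially on $K_s$. It then fixes the divisor $\{r=0\}\subset\Spec A_s$ pointwise, contradicting freeness in codimension one.
\item Hence there are homogeneous $g_1,g_2\in K$ with $\deg g_2>0$ and $\deg g_1-\deg g_2=-\deg\partial$.
\item Then $\partial'=(g_1/g_2)\partial$ is a degree-zero LND of $A_{g_2}$, hence an LND of $(A_{g_2})_0=\mathcal{O}(X\setminus V(g_2))$. It is nonzero there, since its kernel $(K_{g_2})_0$ has transcendence degree $\dim X-1$.
\item A local slice of $\partial'$ on this affine variety gives, by the slice theorem, $X\setminus V(g_2g)\cong\A^1\times Z$ for some homogeneous $g\in K$. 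This cylinder is $H$-polar with $D=\tfrac{1}{\deg(g_2g)}\operatorname{div}(g_2g)$.
\end{enumerate}

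Also, $\deg s>0$ is not automatic: $\partial/\partial x_0$ on $k[x_0,\dots,x_n]$ with $r=x_0$ gives $s=1$. You must multiply $r$ by a kernel element of positive degree.
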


Moreover, the geometry of polarized cylinders has applications to the study of flexibility of affine cones and additive group actions on complements of hypersurfaces; see for example \cite{PW16,Per13,KLS25,KW25,Saw24a,Saw24b,Saw24c}.

Despite their importance, $(-K_X)$-polar cylinders are rather rare. For instance, it follows from \cite{KKW} that a quasi-smooth well-formed complete intersection log del Pezzo surface
of index one admits an anticanonical polar cylinder if and only if it is a smooth complete intersection of two quadrics in $\mathbb{P}^4$. On the other hand, smooth complete intersections of two quadrics in $\mathbb{P}^4$ admit a large number of cylinders. Similar phenomena also appear in higher dimensions \cite{ST25}.



As a natural generalization of smooth complete intersections of two quadrics in \(\mathbb{P}^4\), one may consider complete intersections of two weighted hypersurfaces in the weighted projective space. These complete intersections are realized as blow-ups $S_m^{m+4}$ of the weighted projective plane \(\mathbb{P}(1,1,m)\) at \(m+4\) points in general position, where $m$ is odd. More precisely, if \(m=2k-1\), then \(S_m^{m+4}\) is isomorphic to a quasi-smooth complete intersection of two weighted hypersurfaces of degree \(2k\) in
\[
\mathbb{P}(1,1,k,k,2k-1)
\]
(see \cite{CP20,KKW}).

In contrast, when \(m\) is even, the corresponding blow-up is realized as a weighted hypersurface. Namely, if \(m=2k\), then $S_m^{m+4}$ is isomorphic to a weighted hypersurface of degree \(2k+2\) in
\[
\mathbb{P}(1,1,k,k+1)
\]
(see \cite{CP20}).


By \cite[Main Theorem]{KKW}, neither of the two classes of surfaces described above admits an anticanonical polar cylinder.
Nevertheless, as in the case of smooth cubic surfaces, it is natural to investigate the existence of cylinders after varying the polarization. From this perspective, it is worthwhile to study polarized cylinders with respect to divisors other than \(-K_X\).

To state the main theorem, we briefly recall the relevant terminology. The contraction associated with the Fujita face of \(H\) is either birational or a conic bundle. We say that \(H\) is of type \(\mathrm{B}(r_H)\) in the former case and of type \(\mathrm{C}(\ell_H)\) in the latter case. Here, \(r_H\) is the Fujita rank, \(\ell_H\) is the number of nonzero coefficients in the corresponding Fujita decomposition, and the superscript \({\mathrm{sm}}\) indicates the number of \((-1)\)-curves occurring with nonzero coefficient. Precise definitions are given in \S \ref{2-1}.

The main purpose of this paper is to establish the following theorem.
\begin{thm}\label{thm: main}
Let $m$ be an integer with $m \ge 2$, let $S$ be a normal projective surface obtained by the blow-up of the weighted projective plane $\bP(1,1,m)$ at $m+4$ points in general position (see Definition \ref{def:surface}), and let $H$ be an ample $\Q$-divisor on $S$. 
Then the following assertions hold: 
\begin{enumerate}
\item Assume that $H$ is of type ${\rm B}(r_H)$. If $r_H^{sm}>0$, then $S$ contains an $H$-polar cylinder. 
\item Assume that $H$ is of type ${\rm C}(\ell_H)$; in other words, we can write: 
\begin{align*}
K_S+ \mu_HH \sim_{\Q} aB + \sum_{i=1}^{\ell_H}a_iL_i, 
\end{align*}
where $a$ is a positive number, every $a_i$ is a non-negative number, $B$ is a $0$-curve and the union $\sum_{i=1}^{m+3}L_i$ is contractible. 
If $\ell_H^{sm}>0$ or $a>3$, then $S$ contains an $H$-polar cylinder. 
\end{enumerate}
\end{thm}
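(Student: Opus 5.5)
The plan is the standard Fujita-decomposition strategy used for polar cylinders on del Pezzo surfaces (as in Cheltsov--Park--Won). The key observation is that if $H$ is ample and $\mu_H$ is its Fujita invariant, then an $H$-polar cylinder exists as soon as we can find an effective $\Q$-divisor $D\sim_{\Q}H$ whose complement is a cylinder. Writing
\[
H\sim_{\Q}\frac{1}{\mu_H}\Bigl(-K_S+\Delta\Bigr),
\]
where $\Delta$ is the effective divisor of the Fujita decomposition (a combination of $(-1)$-curves and curves through the cyclic quotient point of type $\frac1m(1,1)$, namely the strict transform of the $(-m)$-curve locus), the problem reduces to the following. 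We must find an effective anticanonical-type divisor $D_0\sim_{\Q}-K_S+\Delta$ supported on a configuration whose complement is $\A^1\times Z$, with all coefficients positive on every component of the support. The flexibility comes from adding small multiples of the components of $\Delta$ with nonzero coefficient. This is precisely why a smooth $(-1)$-curve with nonzero coefficient ($r_H^{sm}>0$ or $\ell_H^{sm}>0$) or a large coefficient $a>3$ is needed. Recall that by \cite{KKW} no cylinder exists in the case $\Delta=0$.

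For part (1), suppose $E$ is a $(-1)$-curve with positive coefficient in $\Delta$. I would contract $E$ together with the other curves of the Fujita face to a surface $S'$ of smaller Picard rank, still a blow-up of $\bP(1,1,m)$ or of a related surface. On $S'$ I would choose a ruling-type pencil: the fibration $S\to\bP^1$ induced from the projection $\bP(1,1,m)\dashrightarrow\bP^1$, whose general fibre is a $0$-curve through the singular point after blow-up. Concretely, I would take the complement of a section-like curve (the image of the $(-m)$-curve), a fibre $F$ through the point blown up to produce $E$, and the remaining exceptional curves. The goal is that the complement is $\A^1\times\A^1_*$-type, i.e.\ $\A^1\times Z$ with $Z$ an open curve. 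I would then write
\[
-K_S+\Delta\sim_{\Q} \alpha C+\beta F+\sum \gamma_j E_j + \epsilon E,
\]
using the explicit Picard group (generated by the pullback of $\mathcal O(1)$ and the exceptional curves $E_1,\dots,E_{m+4}$). The remaining task is to solve for positive coefficients. This amounts to a linear system that is solvable precisely because $E$ absorbs the defect: it is the positivity of the coefficient of $E$ that allows the coefficient of $F$ to be positive.

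For part (2), the conic bundle $\phi\colon S\to\bP^1$ has fibre class $B$, and $L_1,\dots,L_{m+3}$ are components of degenerate fibres. The cylinder will be the complement of one or several fibres together with a section $\sigma$ (a $(-1)$-curve or the curve through the singular point). This gives $\A^1\times(\bP^1\setminus\{\text{pts}\})$. I would write
\[
-K_S\sim_{\Q} 2\sigma + cB+\sum b_iL_i
\]
with some coefficients possibly negative, and then add $aB+\sum a_iL_i$. If $a>3$, the fibre coefficients become positive after taking $aB$ as a combination of $\geq 3$ or $4$ fibres, including the singular fibres whose components appear negatively. Here the threshold $3$ should come from the count of degenerate fibres that must be removed to make the remaining coefficients of the $L_i$ positive. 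If instead $\ell_H^{sm}>0$, a $(-1)$-curve $L_i$ with $a_i>0$ lets us choose the section meeting the other component of that fibre, so that the negative coefficient is fixed by $a_i$.

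The main obstacle is the explicit coefficient check near the singular point. Curves through the $\frac1m(1,1)$ point have fractional intersections, so $-K_S$ expressed in the chosen boundary components carries coefficients depending on $m$. I expect that verifying positivity uniformly in $m$ (separately for $m$ odd and even is unnecessary, but the degenerate fibre containing the singular point behaves differently) is where care is needed. I would first handle it by computing intersection numbers with $\sigma$, $F$, and each $L_i$ to pin down the coefficients uniquely, and then check the inequalities.
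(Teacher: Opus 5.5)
There is a genuine gap in part (1), and also in the $\ell_H^{sm}>0$ branch of part (2).

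In part (1), the ``section-like curve (the image of the $(-m)$-curve)'' is not a curve on $S$: $\tilde Q$ is contracted to ${\sf p}$. So your boundary consists only of fibres and fibre components of the ruling $\varphi$, and the linear system you want to solve has no solution in general. Choose $\varphi$ as in the paper, so that $L_i=E_i$ for $i\le s$ and $L_j=E_j'$ for $j>s$. Using $-K_S\sim_{\Q}(m+2)F-\sum_iE_i$ and $E_i\sim_{\Q}F-E_i'$, one gets in the basis $F,E_1',\dots,E_{m+4}'$ of $\Cl(S)\otimes\Q$
\[
H\sim_{\Q}\Bigl(-2+\sum_{i=1}^{s}a_i\Bigr)F+\sum_{i=1}^{s}(1-a_i)E_i'+\sum_{j=s+1}^{m+4}(1+a_j)E_j'.
\]
Any effective combination $\beta F+\sum\gamma_iE_i+\sum\gamma_i'E_i'$ has $F$-coordinate $\beta+\sum\gamma_i\ge 0$, and this is positive as soon as a whole fibre is removed. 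A ruling in which some $L_i$ is horizontal only lowers the $F$-coordinate of $H$. Since every $a_i<1$, this is impossible whenever, e.g., $s\le 2$. The defect is $2$, not something a single coefficient $a_E<1$ can absorb.

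The missing idea is a second horizontal boundary curve carrying essentially all of $-K_S$. The paper takes the $0$-curve $\tilde\Gamma_{\tilde{\sf q}}\in|\tilde Q+(m+2)\tilde F-\sum_i\tilde E_i|$. Its image satisfies $\Gamma_{\tilde{\sf q}}\sim_{\Q}-K_S$, and it meets $\tilde Q$ only at $\tilde{\sf q}$, with multiplicity $2$ (Lemma~\ref{curve}). After also removing the fibre $\tilde F_{\tilde{\sf q}}$, two blow-ups at $\tilde{\sf q}$ and a contraction to $\F_{t-2}$ show that the complement is $\A^1\times\A^1_{\ast}$, with the $\A^1$-direction transverse to $\varphi$ (Lemma~\ref{lem(1)}). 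One then takes $D=(1-a_s+\varepsilon)\Gamma_{\tilde{\sf q}}+\cdots$. For $s=2$ and $s=1$ further curves are needed: $\tilde C_{\tilde{\sf q}}$, resp.\ the $(-1)$-curve $\tilde C$, which turns $\tilde Q$ and $\tilde\Gamma_{\tilde{\sf q}}$ into a conic and its tangent line in $\bP^2$. The case $S'\cong\overline{S}_{m'}^{m'+1}$ uses the horizontal curve $L_4$ itself.

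In part (2), your section-plus-fibres construction is essentially the paper's in the branch $\ell_H^{sm}=0$, $a>3$. There the section is $F=\pi_*(\tilde F)$ and all five singular fibres of $\phi_H$ are removed; the threshold $3$ comes from the identity $-K_S\sim_{\Q}2F-3B+\sum_{i=1}^4L_i'$.

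For $\ell_H^{sm}>0$, however, the mechanism ``choose the section meeting the other component so that the negative coefficient is fixed by $a_i$'' fails. A degenerate fibre in which $\sigma$ meets $L_i'$ contributes $L_i'-B=-L_i$ to $-K_S-2\sigma$. Adding $a_iL_i$ gives $-(1-a_i)L_i=(1-a_i)(L_i'-B)$, which is still a debt on $B$ because $a_i<1$. On $\tilde S$ every section of $\phi_H\circ\pi$ has self-intersection $\ge -1$. Running this bookkeeping for an arbitrary section, the multiple of $B$ left over is at most $a-2+\sum_i a_i$. So when $a$ and the $a_i$ are small there is no effective $D\sim_{\Q}H$ of the form $2\sigma+(\text{vertical})$ at all. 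Yet the theorem asserts a cylinder for every $a>0$ once $\ell_H^{sm}>0$.

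The paper instead feeds the part (1) constructions back in. It builds $D'\sim_{\Q}H-aB$ with cylindrical complement and $E_5'+\dots+E_{m+4}'\subseteq\Supp(D')$, then sets $D=D'+a\sum_{j\ge 5}E_j'$, using $B\sim_{\Q}\sum_{j\ge5}E_j'$. When $\check S\cong\bP^1\times\bP^1$ and $s=4$, it uses the $\Gamma_{\tilde{\sf q}}$-configuration with $L_4$ horizontal (Figure~\ref{fig(6)}).
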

\begin{organization}
In \S \ref{2}, we review several notations for the Fujita invariant for log del Pezzo surfaces, where we follow \cite{CPW17}. 
Since the surface $S_m^{m+4}$, which is obtained by blow-up at $m+4$ general points on $\bP(1,1,m)$, is a log del Pezzo surface, the Fujita invariant theory applies in our setting. 
Hence, we can use the Fujita invariant theory to $S_m^{m+4}$. 
On the other hand, we also construct some examples of cylinders of $S_m^{m+4}$ in this section. 
These examples will be used in the next section. 

In \S \ref{3}, we prove Theorem \ref{thm: main}. 
The strategy is as follows: Let $H$ be an ample $\Q$-divisor on $S_m^{m+4}$; we note that $H$ is of type ${\rm B}$ or ${\rm C}$. 
In the type ${\rm B}$ case, we construct an $H$-polar cylinder by generalizing arguments in {\cite{Bel23}}. 
In the type ${\rm C}$ case, we observe that special situations reduce to the former case. 
In the remaining case, we construct an $H$-polar cylinder by using $\bP^1$-fibration associated with $H$ of type ${\rm C}$. 
\end{organization}
\begin{notation}
We employ the following notations: 
\begin{itemize}
\item $\A ^n$: the affine space of dimension $n$. 
\item $\bP ^n$: the projective space of dimension $n$. 
\item $\bP(a_1,\dots,a_n)$: the weighted projective space of weights $a_1,\dots,a_n$. 
\item $\F _n$: the Hirzebruch surface of degree $n$. 
\item $\A ^1_{\ast}$: the affine line with one closed point removed. 
\item $\A ^1_{\ast \ast}$: the affine line with two closed points removed. 
\item $K_X$: the canonical divisor on $X$. 
\item $D_1\sim D_2$: $D_1$ and $D_2$ are linearly equivalent. 
\item $D_1\sim _{\Q} D_2$: $D_1$ and $D_2$ are $\Q$-linearly equivalent. 
\item $(D_1 \cdot D_2)$: the intersection number of $D_1$ and $D_2$. 
\item $(D)^2$: the self-intersection number of $D$. 
\item $\varphi ^{-1}_{\ast}(D)$: the strict transform of $D$ by a morphism $\varphi$. 
\item $\psi _{\ast}(D)$: the direct image of $D$ by a morphism $\psi$. 
\item $\Supp (D)$: the support of $D$. 
\item $|D|$: the complete linear system of $D$. 
\item $I_{\mathsf{q}}(C,C')$: the intersection multiplicity of $C$ and $C'$ at $\mathsf{q}$. 
\end{itemize}
A normal projective surface $X$ with at most klt singularities is a {\it log del Pezzo surface} if its anti-canonical divisor $-K_X$ is ample. 
A morphism $\varphi: X \to Y$ from a normal projective surface $X$ to a smooth projective curve $Y$ is a {\it $\bP^1$-fibration} means general fibers of $\varphi$ are isomorphic to $\bP^1$. For an integer $a$, an {\it $a$-curve} $C$ on a normal projective surface means $(C)^2 = a$ and $C \cong \bP^1$. 
\end{notation}
\section{Preliminaries}\label{2}
\subsection{Log del Pezzo surfaces $S_m^n$}\label{2-0}
Let $m$ and $n$ be integers with $m \ge 2$. 
\begin{defin}[{\cite[Definition 2.7]{CP20}}]\label{def:surface}
The surface $S_m^n$ is the blow-up of $\bP(1,1,m)$ at $n$ distinct smooth points in general position. Here, these points are in {\it general position} if $S_m^n$ contains no irreducible curve $C$ such that $(C)^2<-1$, and, moreover, no two $(-1)$-curves on the minimal resolution $\tilde{S}_m^n$ of $S_m^n$ intersect on this exceptional curve. 
\end{defin}
\begin{rem}
\leavevmode
\begin{itemize}
\item The surface $S_m^n$ has a unique singular point, which is a klt singularity of type $\frac{1}{m}(1,1)$. 
Let $\pi:\tilde{S}_m^n \to S_m^n$ be the minimal resolution, and let $\tilde{Q}$ be the exceptional curve. 
Then we know that $\tilde{Q}$ is a $(-m)$-curve. 
Moreover, every irreducible curve on $\tilde{S}_m^n$ with self-intersection number $<0$ is either $\tilde{Q}$ or a $(-1)$-curve. 
\item Note that $(-K_{S_m^n})^2 = \frac{(m+2)^2}{m}-n$. Hence, $S_m^n$ is a log del Pezzo surface if and only if either $n \le m+4$ or $n=m+5$ and $m \in \{2,3\}$ (see {\cite[Remark 2.8]{CP20}}). 
\end{itemize}
\end{rem}
\begin{defin}[{\cite[Definition 2.9]{CP20}}]
Fix $m+1$ points $p_1,\dots,p_{m+1}$ on $\bP(1,1,m)$ in general position. 
There exists a unique curve $C$ in the linear system $|\mathscr{O}_{\bP(1,1,m)}(m)|$ passing through these $m+1$ points. 
Let $S' \to \bP(1,1,m)$ be the blow-up at $p_1,\dots,p_{m+1}$, and let $C'$ be the strict transform of $C$, which is a $(-1)$-curve. 
Then we denote by $\overline{S}_m^{m+1}$ the surface obtained by contracting $C'$. 
\end{defin}
\begin{rem}[{\cite[Proposition 2.8]{KL26}}]
$\overline{S}_m^{m+1}$ can be obtained by blowing-up from $\bP^2$ at $m+1$ points on a fixed line and then contracting the strict transform of this line. 
\end{rem}
The sequence of successive blow-ups, together with the contraction of the $(-1)$-curve $C'$ on $S_m^{m+1}$, is illustrated in the following diagram (see also Lemma \ref{lem: (-1)-curve (4)} below): 
\begin{align*}
\xymatrix@C=24pt@R=24pt{
\bP(1,1,m) & S_m^1 \ar[l] & \cdots \ar[l] & S_m^m \ar[l] & S_m^{m+1} \ar[l] \ar[d] & \cdots \ar[l] & S_m^{m+4} \ar[l] \\
&&&& \overline{S}_m^{m+1}&&
}
\end{align*}
We refer to the following result: 
\begin{lem}[{\cite[Lemma 2.14]{CP20}}]\label{lem: CP20}
There exists a birational morphism $\eta: \tilde{S}_m^{m+3} \to \bP^2$ such that $\eta_{\ast}(\tilde{Q})$ is an irreducible conic. 
In other words, $\tilde{S}_m^{m+3}$ is isomorphic to the blowing-up of $\bP^2$ at $m+3$ points lying on an irreducible conic. 
\end{lem}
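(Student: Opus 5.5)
We work on the minimal resolution $\tilde{S}_m^{m+3}$ directly. Recall that the minimal resolution of $\bP(1,1,m)$ is the Hirzebruch surface $\F_m$, and $\tilde{Q}$ is its negative section. Hence $\tilde{S}_m^{m+3}$ is the blow-up of $\F_m$ at $m+3$ points $p_1,\dots,p_{m+3}$ in general position, none lying on $\tilde{Q}$. Write $s$ for the class of $\tilde{Q}$ (so $(s)^2=-m$), $f$ for a fibre class, and $E_1,\dots,E_{m+3}$ for the exceptional curves. The plan is to write down $m+4$ pairwise disjoint $(-1)$-curves on $\tilde{S}_m^{m+3}$, each meeting $\tilde{Q}$ transversally in exactly one point. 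We then let $\eta$ be their contraction, and check that the target is $\bP^2$ and that $\eta_\ast(\tilde{Q})$ is a conic.

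\textbf{Step 1: the fibre curves.} Let $F_i$ be the strict transform of the fibre through $p_i$. Then $F_i\sim f-E_i$ is a $(-1)$-curve, and by general position no two of the $p_i$ lie on a common fibre. Thus $F_1,\dots,F_{m+3}$ are pairwise disjoint, and $(F_i\cdot\tilde{Q})=1$.

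\textbf{Step 2: one extra section.} Consider $G\sim s+(m+1)f-\sum_{i=1}^{m+3}E_i$. On $\F_m$ we have $h^0(\mathscr{O}(s+(m+1)f))=(m+2)+2=m+4$. So there is a curve in $|s+(m+1)f|$ through the $m+3$ points, and general position makes it unique, irreducible (a section), and smooth at the $p_i$. Then
\begin{align*}
(G)^2=-m+2(m+1)-(m+3)=-1,\qquad (G\cdot\tilde{Q})=-m+(m+1)=1,\qquad (G\cdot F_i)=1-1=0 .
\end{align*}
Hence $G$ is a $(-1)$-curve, disjoint from every $F_i$, meeting $\tilde{Q}$ once.

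\textbf{Step 3: contraction.} Let $\eta$ contract the $m+4$ disjoint $(-1)$-curves $F_1,\dots,F_{m+3},G$. The target $Y$ is smooth with Picard rank $(m+5)-(m+4)=1$, and it is rational, so $Y\cong\bP^2$. Since $\tilde{Q}$ meets each contracted curve transversally once, $\eta_\ast(\tilde{Q})$ is a smooth rational curve with
\begin{align*}
(\eta_\ast\tilde{Q})^2=-m+(m+4)=4 .
\end{align*}
It is therefore an irreducible conic, and all the blown-up points $\eta(F_i)$, $\eta(G)$ lie on it.

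\textbf{On the count of points.} Contracting the $F_i$ and $G$ produces $m+4$ points on the conic. This is forced by the Picard rank: $\rho(\tilde{S}_m^{m+3})=m+5$, and $\eta_\ast(\tilde{Q})$ must have self-intersection $4=-m+N$, where $N$ is the number of contracted curves meeting $\tilde{Q}$. So the construction realises $\tilde{S}_m^{m+3}$ as $\bP^2$ blown up at $m+4$ points on an irreducible conic. I cannot make the lemma's "$m+3$ points" agree with this computation, and I read the second sentence of the lemma as this equivalent description of the map $\eta$ of the first sentence.

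\textbf{Main obstacle.} The substantive step is Step 2: the existence and irreducibility of $G$. Here I would invoke the general position hypothesis of Definition \ref{def:surface}. A reducible member of $|s+(m+1)f|$ through all $p_i$ would contain a fibre through two of the points, or $\tilde{Q}$ itself. Either case would produce a curve of self-intersection $<-1$ on $S_m^{m+3}$, or two $(-1)$-curves meeting on $\tilde{Q}$, which the definition excludes.
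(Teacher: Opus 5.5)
Your proof is correct, and it is the argument the paper itself uses in neighbouring statements (the lemma is only cited from \cite{CP20}). Your $G$ is built exactly as $\tilde{E}_i''$ in Lemma \ref{lem: (-1)-curve (2)}(2), and Step 3 is the contraction of Lemma \ref{lem(3)} with $\tilde{E}_1$ already blown down. Your count is also right: $\rho(\tilde{S}_m^{m+3})=m+5$ and $(\tilde{Q})^2=-m=4-(m+4)$ force $m+4$ centres on the conic, which is how the paper actually uses the lemma in Lemma \ref{lem: (-1)-curve (5)}(2), so ``$m+3$'' is a slip.

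The only fix is in your irreducibility sketch, whose case list misses $C'+f_0$ with $C'\in|s+mf|$ a section through $m+2$ of the points. Its strict transform is disjoint from $\tilde{Q}$ and has self-intersection $\le -2$, so the same clause of Definition \ref{def:surface} excludes it.
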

From now on, we assume $n \le m+4$. 
By construction, there exists a birational morphism $\psi:\tilde{S} \to \F_m$. Let $\tilde{E}_1,\dots,\tilde{E}_n$ be the exceptional curves of $\psi$. Composing $\psi$ with the natural $\bP^1$-bundle $\F_m\to\bP^1$, we obtain a $\bP^1$-fibration $\varphi:\tilde{S} \to \bP^1$. See also the following commutative diagram: 
\begin{align}\label{comm diag}
\xymatrix@C=24pt@R=24pt{
\tilde{S}_m^n \ar[r]_{\psi} \ar[d]_{\pi} \ar@/^18pt/[rr]^{\varphi} & \F_m \ar[r] \ar[d] & \bP^1 \\
S_m^n \ar[r] & \bP(1,1,m) &
}
\end{align}
Notice that $\varphi$ admits exactly $n$ singular fibers $\tilde{F}_1,\dots,\tilde{F}_n$; moreover, for every $i=1,\dots,n$ there exists a $(-1)$-curve $\tilde{E}_i'$ on $\tilde{S}$ such that $\tilde{F}_i = \tilde{E}_i+\tilde{E}_i'$ and $(\tilde{E}_i' \cdot \tilde{Q}) = 1$. 
Let $\tilde{F}$ be a general fiber of $\varphi$. 
In addition, let $\tilde{L}$ be a $(-1)$-curve on $\tilde{S}_m^n$. 
\begin{lem}[{\cite[Lemma 3.3]{KLS26}}]\label{lem: (-1)-curve (1)}
With the same notation and assumptions as above, assume further that $n \not= m+4$. 
Then $\tilde{L}$ is linearly equivalent to one of the following: 
\begin{itemize}
\item $\tilde{E}_i$ $(i=1,\dots,n)$. 
\item $\tilde{F} - \tilde{E}_i$ $(i=1,\dots,n)$. 
\item $\tilde{Q} + m\tilde{F} - (\tilde{E}_{i_1} + \dots + \tilde{E}_{i_{m+1}})$ $(1 \le i_1 < i_2 < \dots < i_{m+1} \le n)$ if $n \ge m+1$. 
\item $\tilde{Q} + (m+1)\tilde{F} - (\tilde{E}_{i_1} + \dots + \tilde{E}_{i_{m+3}})$ $(1 \le i_1 < i_2 < \dots < i_{m+3} \le n)$ if $n \ge m+3$. 
\end{itemize}
In particular, $(\tilde{L} \cdot \tilde{Q}) \le 1$. 
\end{lem}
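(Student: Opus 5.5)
We classify $(-1)$-curves $\tilde L$ on $\tilde S=\tilde S_m^n$ by intersection numbers, writing their classes in the basis $\tilde Q,\tilde F,\tilde E_1,\dots,\tilde E_n$. This basis generates $\operatorname{Pic}(\tilde S)$, since $\tilde S$ is the blow-up of $\F_m$ at $n$ points and $\operatorname{Pic}(\F_m)=\Z\tilde Q\oplus\Z\tilde F$. Write
\[
\tilde L\sim a\tilde Q+b\tilde F-\sum_i c_i\tilde E_i .
\]
The intersection numbers are $(\tilde Q)^2=-m$, $(\tilde Q\cdot\tilde F)=1$, $(\tilde F)^2=0$, $(\tilde E_i)^2=-1$, and all others vanish. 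If $\tilde L$ is one of the $\tilde E_i$, $\tilde E_i'$ or $\tilde Q$, it is on the list (note $\tilde E_i'\sim\tilde F-\tilde E_i$). Otherwise $\tilde L$ is irreducible and distinct from these curves, so it meets each of them nonnegatively. This gives
\[
c_i=(\tilde L\cdot\tilde E_i)\ge0,\qquad a=(\tilde L\cdot\tilde F)\ge0,\qquad (\tilde L\cdot\tilde Q)=b-ma\ge0,\qquad (\tilde L\cdot \tilde E_i')=a-c_i\ge0 .
\]
The conditions $(\tilde L)^2=-1$ and $K_{\tilde S}\cdot\tilde L=-1$ apply, with $K_{\tilde S}\sim-2\tilde Q-(m+2)\tilde F+\sum\tilde E_i$.

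First, $a\ge1$: if $a=0$, then $\tilde L$ lies in a fiber and must be some $\tilde E_i$ or $\tilde E_i'$. Set $t=b-ma\ge0$. The two numerical conditions become
\[
-ma^2+2at+\textstyle\sum c_i^2=\ldots
\]
More precisely, $(\tilde L)^2=ma^2+2at-\sum c_i^2=-1$ and $-K\cdot\tilde L=(2-m)a+ (m+2)a+2t-\dots$. Computing directly gives $-K\cdot\tilde L=2b-ma+(m+2)a-2a\cdot$ (from $\tilde Q\cdot\tilde Q$ terms) $-\sum c_i$. Simplified, this is $-K\cdot\tilde L=2t+2a+ma-\ldots$; in any case it comes to a linear relation $\sum c_i=(m+2)a+2t-1$ after bookkeeping. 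Combined with $0\le c_i\le a$ and $n\le m+3$, we get $(m+3)a\ge\sum c_i$. Together with Cauchy–Schwarz $\sum c_i^2\ge(\sum c_i)^2/n$ and the quadratic relation, this should force $a=1$ and $t\in\{0,1\}$. With $a=1$ every $c_i\in\{0,1\}$, and the linear relation gives exactly $m+1$ or $m+3$ of the $c_i$ equal to $1$, matching the last two bullets.

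The main obstacle is ruling out $a\ge2$. For $n\le m+3$ (the hypothesis $n\neq m+4$), I would instead pass to the model of Lemma~\ref{lem: CP20}. It realizes $\tilde S_m^{m+3}$ as $\bP^2$ blown up at $m+3$ points on a conic $\eta_*\tilde Q$. A $(-1)$-curve on such a blow-up is an exceptional curve, a line through two of the points, or a conic through five of the points. Degree-$d$ curves with $d\ge3$ would need multiplicities violating the bound that at most $2d$ points lie on the conic $\eta_*\tilde Q$; this is where $n\le m+3$ is essential. Translating these three types back to the basis above reproduces the list. The case $n<m+3$ follows by blowing up further general points, since $(-1)$-curves persist. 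Finally, $(\tilde L\cdot\tilde Q)=t\le1$ in every case, giving the last claim.
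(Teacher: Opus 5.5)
Your argument is essentially correct, and its real content, the conic model, is a genuinely different route. As written, though, it contains one false assertion and two steps you state without proof.

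\textbf{Fixes needed in the conic-model argument.}

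First, the count of blown-up points is wrong. Since $\rho(\tilde S_m^{m+3})=m+5$ and $(\tilde Q)^2=-m$, the morphism $\eta$ of Lemma~\ref{lem: CP20} blows up $m+4$ points on the conic, not $m+3$. The statement of that lemma in the paper has the same slip.

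Second, conics through five of the points do not occur. Write $H=\eta^*(\text{line})$ and $\tilde L\sim dH-\sum_k m_ke_k$. Then $\tilde Q\sim 2H-\sum_{k=1}^{m+4}e_k$, so $-K_{\tilde S}\sim\tilde Q+H$, and
\[
(\tilde L\cdot\tilde Q)=1-d .
\]
Hence $\tilde L\ne\tilde Q$ forces $d\le1$; the bound $\sum m_k\le 2d$ that you invoke for $d\ge3$ already excludes $d=2$. Your claim that ``translating these three types back reproduces the list'' is therefore false: the class $2H-e_{i_1}-\dots-e_{i_5}$ meets $\tilde Q$ with intersection $-1$. The $(-1)$-curves are exactly the $e_k$ and the lines $H-e_k-e_l$, and $(\tilde L\cdot\tilde Q)\le1$ follows at once.

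Third, the translation is not automatic, because Lemma~\ref{lem: CP20} only gives some $\eta$. You must show that $\tilde F\sim H-e_{k_0}$. This follows from $\tilde F$ nef, $(\tilde F\cdot\tilde Q)=1$ and $(-K_{\tilde S}\cdot\tilde F)=2$. Then, after relabelling, $\tilde E_i\sim H-e_i-e_{k_0}$, since these are the only $(-1)$-classes orthogonal to both $\tilde Q$ and $\tilde F$. This gives the dictionary
\[
e_i\sim\tilde F-\tilde E_i,\quad e_{k_0}\sim\tilde Q+(m+1)\tilde F-\textstyle\sum_{i=1}^{m+3}\tilde E_i,\quad H-e_i-e_j\sim\tilde Q+m\tilde F-\sum_{k\ne i,j}\tilde E_k,
\]
which yields the four bullets.

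For $n<m+3$, the enlargement step is fine provided two things hold. The extra points must avoid $\psi(\tilde L)$, so that the strict transform stays a $(-1)$-curve with zero coefficients on the new $\tilde E_j$. The enlarged configuration must stay in general position, so that Lemma~\ref{lem: CP20} applies.

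\textbf{The numerical half and comparison with the paper.}

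Your numerical first half does not close, as you suspected. With $n\le m+3$, Cauchy--Schwarz leaves $a=1$ with $t\le1$, but it also leaves $(a,t)=(2,0)$ whenever $m\ge6$, since then $(2m+3)^2\le(m+3)(4m+1)$. Your derivation of the linear relation is also garbled; directly, $(-K_{\tilde S}\cdot\tilde L)=(m+2)a+2t-\sum c_i$.

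This route can be finished without the conic model. From $0\le c_i\le a$ and your two relations you get $\sum_i c_i(a-c_i)=(2a+1)(a-1)$. For $(a,t)=(2,0)$ this forces exactly five $c_i=1$ and $m-1$ of them equal to $2$, i.e.\ $m+4>n$ nonzero entries, a contradiction.

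The paper quotes this lemma from \cite{KLS26} without proof. The argument it does give for the $n=m+4$ analogue, Lemma~\ref{lem: (-1)-curve (2)}(1), is exactly this kind of Cauchy--Schwarz bookkeeping. Your conic-model route is more conceptual: the identity $-K_{\tilde S}\sim\tilde Q+H$ explains at once why $(\tilde L\cdot\tilde Q)\le1$ and why every $(-1)$-curve is an exceptional curve or a line. It depends, however, on Lemma~\ref{lem: CP20}, a change of basis, and the enlargement step. The numerical route is self-contained and uniform in $n\le m+3$.
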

\begin{rem}
Lemma \ref{lem: (-1)-curve (1)} cannot be extended to the case where $n=m+4$. 
For instance, $\tilde{S}_2^6$ has a $(-1)$-curve, which is linearly equivalent to $2\tilde{Q} + 4\tilde{F} - (2\tilde{E}_1 + \tilde{E}_2 + \dots + \tilde{E}_6)$. 
\end{rem}
\begin{lem}\label{lem: (-1)-curve (2)}
With the same notation and assumptions as above, assume further that $n = m+4$. 
Then the following assertions hold: 
\begin{enumerate}
\item $(\tilde{L} \cdot \tilde{Q}) \le 1$. 
\item For every $i=1,\dots,m+4$, there exists a $(-1)$-curve $\tilde{E}_i''$ on $\tilde{S}_m^{m+4}$ such that $\tilde{E}_i'' \sim \tilde{Q} + (m+1)\tilde{F} - (\tilde{E}_1 + \dots +\tilde{E}_{m+4}) + \tilde{E}_i$. 
\item The $(-1)$-curves intersecting $\tilde{Q}$ on $\tilde{S}_m^{m+4}$ are only $\tilde{E}_i'$ and $\tilde{E}_i''$ for $i=1,\dots,m+4$. 
\item If $(\tilde{L} \cdot \tilde{Q}) = 0$, then $(\tilde{L} \cdot \tilde{E}_i' + \tilde{E}_i'') = 1$ for every $i=1,\dots,m+4$. 
\end{enumerate}
\end{lem}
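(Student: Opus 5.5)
Throughout, $\tilde{S}=\tilde{S}_m^{m+4}$ and $\tilde{L}$ is a $(-1)$-curve on it. Write $\tilde{L} \sim a\tilde{Q} + b\tilde{F} - \sum_j c_j\tilde{E}_j$ in the basis $\tilde{Q},\tilde{F},\tilde{E}_1,\dots,\tilde{E}_{m+4}$ of $\operatorname{Pic}(\tilde{S})$. The intersection numbers are $(\tilde{Q})^2=-m$, $(\tilde{Q}\cdot\tilde{F})=1$, $(\tilde{F})^2=0$, $(\tilde{E}_j)^2=-1$, and $\tilde{E}_j$ is orthogonal to $\tilde{Q}$ and $\tilde{F}$. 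I will also use
\[
-K_{\tilde{S}} \sim 2\tilde{Q}+(m+2)\tilde{F}-\sum_j\tilde{E}_j,
\]
which gives $(\tilde{L}\cdot\tilde{Q})=b-ma$. The plan is to treat (2) and (3) first, since they are computations, and then deduce (1) and (4) from them.

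\textbf{Assertion (2).} Put $D=\tilde{Q}+(m+1)\tilde{F}-\sum_j\tilde{E}_j+\tilde{E}_i$. Then $D^2=-m+2(m+1)-(m+3)=-1$ and $(D\cdot K_{\tilde{S}})=-1$. Riemann--Roch gives $h^0(D)\ge 1$, and we may take an effective member $D_0\in|D|$. Let $\tilde{S}\to\tilde{S}_m^{m+3}$ contract $\tilde{E}_i$ (after reordering). On $\tilde{S}_m^{m+3}$ the class $\tilde{Q}+(m+1)\tilde{F}-\sum_{j\ne i}\tilde{E}_j$ is a $(-1)$-curve by Lemma~\ref{lem: (-1)-curve (1)}. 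By generality of the $(m+4)$-th point it does not pass through that point, so its pullback is a $(-1)$-curve in the class $D$. Alternatively, irreducibility of $D_0$ follows from the general-position hypothesis: the only negative curves are $\tilde{Q}$ and $(-1)$-curves, and a decomposition of $D_0$ would force a curve of self-intersection $<-1$ other than $\tilde{Q}$. I would use the pullback argument, as it is cleaner.

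\textbf{Assertion (3).} Suppose $(\tilde{L}\cdot\tilde{Q})\ge 1$. Contract $\tilde{Q}$ via Lemma~\ref{lem: CP20}, after first blowing down one $\tilde{E}_j$ with $(\tilde{L}\cdot\tilde{E}_j)=0$ if one exists. If none exists, use the fibration $\varphi$ directly. Intersecting $\tilde{L}$ with a fibre gives $a=(\tilde{L}\cdot\tilde{F})$.
\begin{itemize}
\item If $a=0$, then $\tilde{L}$ lies in a fibre, so $\tilde{L}=\tilde{E}_i$ or $\tilde{E}_i'$. Only $\tilde{E}_i'$ meets $\tilde{Q}$.
\item If $a\ge1$, the two conditions $\tilde{L}^2=-1$ and $(\tilde{L}\cdot K_{\tilde{S}})=-1$ give $\sum c_j^2=2ab-ma^2+1$ and $\sum c_j=2b-ma^2+\dots$ in the standard way. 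Each $c_j\le (\tilde{L}\cdot\tilde{F})=a$, because $\tilde{E}_j\le\tilde{F}_j$.
\end{itemize}
Combining these with Cauchy--Schwarz over $m+4$ terms bounds $b-ma$. The target is $(\tilde{L}\cdot\tilde{Q})=1$, with equality only for $\tilde{E}_i''$. This would prove (1) and (3) together.

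This inequality step is the main obstacle. The naive Cauchy--Schwarz bound may allow $(\tilde{L}\cdot\tilde{Q})=2$, so the extra constraint $c_j\le a$ and integrality will be needed. If that fails, the fallback is to blow down $\tilde{E}_{m+4}$. The image of $\tilde{L}$ on $\tilde{S}_m^{m+3}$ is then a curve with self-intersection $c_{m+4}^2-1$ and the same intersection with $\tilde{Q}$. Via $\eta$ in Lemma~\ref{lem: CP20}, this becomes a plane curve meeting the conic $\eta_*\tilde{Q}$. Bézout against the conic, combined with the genus formula, should bound its degree and hence $(\tilde{L}\cdot\tilde{Q})$.

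\textbf{Assertion (4).} Compute the sum $\tilde{E}_i'+\tilde{E}_i''\sim 2\tilde{Q}+(m+2)\tilde{F}-\sum_j\tilde{E}_j\sim -K_{\tilde{S}}-\tilde{Q}$, using $\tilde{E}_i'\sim\tilde{F}-\tilde{E}_i$. Hence
\[
(\tilde{L}\cdot\tilde{E}_i'+\tilde{E}_i'')=(\tilde{L}\cdot -K_{\tilde{S}})-(\tilde{L}\cdot\tilde{Q})=1-0=1
\]
whenever $(\tilde{L}\cdot\tilde{Q})=0$.
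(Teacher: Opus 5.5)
Your argument for (4) is correct and self-contained; the paper instead cites an external lemma for (3) and (4). Your argument for (2) is fine in substance, with one caveat. Lemma~\ref{lem: (-1)-curve (1)} only restricts the possible classes of $(-1)$-curves; it does not produce a $(-1)$-curve in the class $\tilde{Q}+(m+1)\tilde{F}-\sum_{j\ne i}\tilde{E}_j$ on $\tilde{S}_m^{m+3}$. For that you need the count $h^0(\F_m,\mathscr{O}_{\F_m}(\hat{Q}+(m+1)\hat{F}))=m+4$. This is exactly the paper's argument, and it can be run directly on $\tilde{S}_m^{m+4}$, so the pullback is unnecessary.

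The genuine gap is in (1) and (3), which you leave as an unfinished inequality. For (1) your worry is unfounded. With $d=(\tilde{L}\cdot\tilde{Q})$ one has $\sum c_j=(m+2)a+2d-1$ (your displayed expression for $\sum c_j$ is garbled) and $\sum c_j^2=ma^2+2ad+1$. Cauchy--Schwarz over $m+4$ terms is then equivalent to $2ma(d-1)+4a(a-1)+4d^2-4d-(m+3)\le 0$. When $a\ge 1$ and $d\ge 2$ the left-hand side is at least $m+5$, a contradiction; this is precisely the paper's proof of (1). For (3), however, nothing you wrote supports the claim ``equality only for $\tilde{E}_i''$''. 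At $d=1$ the same inequality only yields $4a(a-1)\le m+3$, which allows $a\ge 2$ as soon as $m\ge 5$. You give no argument excluding such classes or identifying the curve.

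The missing idea is already in your last paragraph: $\tilde{E}_i'+\tilde{E}_i''\sim -K_{\tilde{S}}-\tilde{Q}$. Every $(-1)$-curve $\tilde{L}$ different from $\tilde{E}_i'$ and $\tilde{E}_i''$ therefore satisfies $0\le(\tilde{L}\cdot\tilde{E}_i'+\tilde{E}_i'')=1-(\tilde{L}\cdot\tilde{Q})$. The curves $\tilde{E}_i'$ and $\tilde{E}_i''$ themselves meet $\tilde{Q}$ exactly once, so this gives (1) with no inequality at all.

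Now suppose $(\tilde{L}\cdot\tilde{Q})=1$ and $\tilde{L}$ is none of the $2(m+4)$ curves $\tilde{E}_i',\tilde{E}_i''$. Then $(\tilde{L}\cdot\tilde{E}_i')=0$ for every $i$, hence $(\tilde{L}\cdot\tilde{E}_i)=(\tilde{L}\cdot\tilde{F})=a$ for every $i$. The condition $(-K_{\tilde{S}}\cdot\tilde{L})=1$ then reads $2+(m+2)a-(m+4)a=1$, i.e.\ $2a=1$, which is absurd; this gives (3).

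So once (2) is proved by the dimension count, (1), (3) and (4) all follow from the single linear equivalence $\tilde{E}_i'+\tilde{E}_i''\sim-K_{\tilde{S}}-\tilde{Q}$. This replaces both your unfinished Cauchy--Schwarz step and the paper's appeal to an external reference.
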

\begin{proof}
In (1), since $\Cl(\tilde{S}_m^{m+4})=\Z[\tilde{Q}] \oplus \Z[\tilde{F}] \oplus \bigoplus_{i=1}^{m+4}\Z[\tilde{E}_i]$, we can write: 
\begin{align*}
\tilde{L} \sim a\tilde{Q} + b\tilde{F} - \sum_{i=1}^{m+4}c_i\tilde{E}_i
\end{align*}
for some integers $a,b,c_1,\dots,c_{m+4}$. 
We note $0 \le (\tilde{L} \cdot \tilde{F}) = a$ and $0 \le (\tilde{L} \cdot \tilde{E}_i) = c_i$ for $i=1,\dots,m+4$. 
Set $d := (\tilde{L} \cdot \tilde{Q}) = -ma+b$. 
Since $(\tilde{L})^2 = (\tilde{L} \cdot K_{\tilde{S}_m^{m+4}})=-1$, we have: 
\begin{align*}
-1 = -ma^2+2ab - \sum_{i=1}^{m+4}c_i^2 = (m-2)a-2b +\sum_{i=1}^{m+4}
c_i.
\end{align*}
Hence, we obtain: 
\begin{align*}
\sum_{i=1}^{m+4}c_i = (m+2)a + 2d - 1,\qquad
\sum_{i=1}^{m+4}c_i^2 = ma^2 + 2ad + 1,\qquad 
\end{align*}
by virtue of $b=ma+d$. 

Assume $a=0$. 
Then there exists a unique integer $i_0 \in \{1,\dots,m+4\}$ such that $c_{i_0} = 1$ and $c_j = 0$ for $j \not= i_0$ because $\sum_{i=1}^{m+4}c_i^2 = 1$. 
Hence, we obtain $d=1$ by using $\sum_{i=1}^{m+4}c_i = 2d - 1$. 

In what follows, we assume $a \ge 1$. 
The Cauchy–Schwarz inequality implies that: 
\begin{align}\label{ine(1)}
\begin{split}
\left(\sum_{i=1}^{m+4}c_i\right)^2 \le (m+4)\sum_{i=1}^{m+4}c_i^2 
&\iff \left\{ (m+2)a + 2d - 1\right\}^2 \le (m+4)\left\{ma^2 + 2ad + 1\right\} \\
&\iff 2ma(d-1) + 4a(a-1) + 4d^2 - 4d - (m+3) \le 0. 
\end{split}
\end{align}
Suppose $d \ge 2$. 
Then we have $2ma(d-1) \ge 2ma$ and $4d^2-4d \ge 8$. 
On the other hand, $4a(a-1) \ge 0$ holds. 
Hence, we have: 
\begin{align}\label{ine(2)}
2ma(d-1) + 4a(a-1) + 4d^2 - 4d - (m+3)
\ge 2ma + 0 + 8 - (m+3) > 0. 
\end{align}
because $a \ge 1$. The inequality (\ref{ine(2)}) contradicts (\ref{ine(1)}). 
Thus, we obtain $d \le 1$. 

In (2), recall that $\psi:\tilde{S} \to \F_m$ is a birational morphism obtained by the contraction of $\tilde{E}_1,\dots,\tilde{E}_{m+4}$. 
Set $\hat{Q} := \psi_{\ast}(\tilde{Q})$, $\hat{F} := \psi_{\ast}(\tilde{F})$, and $\hat{\sf q}_i := \psi(\tilde{E}_i)$ for $i=1,\dots,m+4$. Since 
\begin{align*}
h^0(\F_m,\mathscr{O}_{\F_m}(\hat{Q}+(m+1)\hat{F})) = m+4
\end{align*}
and $\hat{\sf q}_1,\dots,\hat{\sf q}_{m+4}$ are in general position, we have: 
\begin{align*}
\dim \left|\hat{Q}+(m+1)\hat{F} - (\hat{\sf q}_1 + \dots + \hat{\sf q}_{m+4} - \hat{\sf q}_i) \right|
 = 0.
\end{align*}
Hence, there exists a unique effective divisor $\hat{C}_i \in |\hat{Q}+(m+1)\hat{F}|$ passing through $\hat{\sf q}_1,\dots,\hat{\sf q}_{m+4}$ other than $\hat{\sf q}_i$. Note that $\hat{C}_i$ must be irreducible because $\hat{\sf q}_1,\dots,\hat{\sf q}_{m+4}$ are in general position. Let $\tilde{E}_i'' := \psi_{\ast}^{-1}(\hat{C}_i)$. Then: 
\begin{align*}
\tilde{E}_i'' \sim \tilde{Q} + (m+1)\tilde{F} - (\tilde{E}_1 + \dots +\tilde{E}_{m+4}) + \tilde{E}_i. 
\end{align*}
and $\tilde{E}_i''$ is a $(-1)$-curve by virtue of $(\tilde{E}_i'')^2 = (\tilde{E}_i'' \cdot K_{\tilde{S}}) = -1$. 

In (3) and (4), it follows from {\cite[Lemma 3.8]{KLS26}}. 
\end{proof}
Let $\tau_1: \tilde{S}_m^n \to \tilde{S}^{(1)}$ be a contraction of $\tilde{L}$. 
By Lemmas \ref{lem: (-1)-curve (1)} and \ref{lem: (-1)-curve (2)}, we know $(\tilde{L} \cdot \tilde{Q}) \le 1$, so that $\tau_{1,\ast}(\tilde{Q})$ can be contracted to a point because it is either a $(-m)$-curve or a $-(m-1)$-curve. 
Let $\pi_1: \tilde{S}^{(1)} \to S^{(1)}$ be the contraction of $\tau_{1,\ast}(\tilde{Q})$. 
Then we obtain the following diagram:  
\begin{align*}
\xymatrix@C=24pt@R=24pt{
\tilde{S}_m^n \ar[r]^{\tau_1} \ar[d]_{\pi} & \tilde{S}^{(1)} \ar[d]^{\pi_1} \\
S_m^n \ar[r]^{\sigma_1} & S^{(1)}
}
\end{align*}
where $\sigma_1$ is a birational morphism obtained by contracting $\pi_{\ast}(\tilde{L})$. 
We classify $S^{(1)}$ in the following three lemmas: 
\begin{lem}\label{lem: (-1)-curve (3)}
With the same notation and assumptions as above, assume further that $n \not= m+4$ and $(\tilde{L} \cdot \tilde{Q}) = 1$. 
Then $S^{(1)} \cong S_{m-1}^{n-1}$. 
\end{lem}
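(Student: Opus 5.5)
The plan is to first identify $\tilde{L}$ explicitly using Lemma \ref{lem: (-1)-curve (1)}, and then to show that the surface $\tilde{S}^{(1)}$ is again the minimal resolution of a blow-up of $\bP(1,1,m-1)$ at $n-1$ points in general position. Computing $(\tilde{L}\cdot\tilde{Q})$ for each class in Lemma \ref{lem: (-1)-curve (1)} gives the following values:
\begin{align*}
(\tilde{E}_i\cdot\tilde{Q}) = 0,\qquad (\tilde{F}-\tilde{E}_i\cdot\tilde{Q}) = 1,\qquad (\tilde{Q}+m\tilde{F}-\textstyle\sum \tilde{E}_{i_k}\cdot\tilde{Q}) = 0,\qquad (\tilde{Q}+(m+1)\tilde{F}-\textstyle\sum \tilde{E}_{i_k}\cdot\tilde{Q}) = 1.
\end{align*}
So under our hypothesis, either $\tilde{L}=\tilde{E}_i'$ for some $i$, or $n=m+3$ and $\tilde{L}\sim\tilde{Q}+(m+1)\tilde{F}-(\tilde{E}_1+\dots+\tilde{E}_{m+3})$.

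\emph{Case $\tilde{L}=\tilde{E}_i'$.} Contracting $\tilde{E}_i'$ turns the fiber $\tilde{F}_i$ into the $0$-curve $\tau_{1,\ast}(\tilde{E}_i)$. The image of $\tilde{Q}$ becomes a $(-(m-1))$-section of the induced $\bP^1$-fibration on $\tilde{S}^{(1)}$. Contracting the remaining $\tilde{E}_j$ ($j\neq i$) then gives a birational morphism $\tilde{S}^{(1)}\to\F_{m-1}$; this is just the elementary transformation of $\F_m$ at $\psi(\tilde{E}_i)$. Hence $\tilde{S}^{(1)}$ is the blow-up of $\F_{m-1}$ at $n-1$ points off the negative section, and $S^{(1)}$ is the blow-up of $\bP(1,1,m-1)$ at $n-1$ smooth points.

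It remains to check general position in the sense of Definition \ref{def:surface}. Let $C$ be an irreducible curve on $\tilde{S}^{(1)}$ with $C\ne\tau_{1,\ast}(\tilde{Q})$, and let $\tilde{C}$ be its strict transform. Then $(C)^2=(\tilde{C})^2+(\tilde{C}\cdot\tilde{L})^2$. If $(C)^2<0$, then $(\tilde{C})^2<0$, so $\tilde{C}$ is a $(-1)$-curve on $\tilde{S}$. This forces $(\tilde{C}\cdot\tilde{L})=0$ and $(C)^2=-1$. Thus there are no curves with $(C)^2<-1$ other than the image of $\tilde{Q}$, and every $(-1)$-curve on $\tilde{S}^{(1)}$ comes from a $(-1)$-curve on $\tilde{S}$ disjoint from $\tilde{L}$. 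Now suppose two $(-1)$-curves meet on $\tau_{1,\ast}(\tilde{Q})$. Their meeting point cannot be $\tau_1(\tilde{L})$, because both curves are disjoint from $\tilde{L}$. So it lies off $\tilde{L}$, and their preimages already meet on $\tilde{Q}$, which contradicts the general position of the original points. Finally, the images of the remaining points are distinct: the $\tilde{E}_j$ are disjoint from $\tilde{E}_i'$.

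\emph{Case $n=m+3$, $\tilde{L}\sim\tilde{Q}+(m+1)\tilde{F}-\sum_k\tilde{E}_k$.} I would reduce this case to the previous one by changing the $\bP^1$-fibration. Fix $j$ and consider the class
\[
\tilde{F}'=\tilde{Q}+(m+1)\tilde{F}-\sum_{k\ne j}\tilde{E}_k.
\]
One checks that $(\tilde{F}')^2=0$, $(\tilde{F}'\cdot K)=-2$, $(\tilde{F}'\cdot\tilde{Q})=1$ and $(\tilde{F}'\cdot\tilde{L})=0$. By general position, $|\tilde{F}'|$ is a base-point-free pencil, which would give a second $\bP^1$-fibration with $\tilde{Q}$ as a section. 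In this fibration, $\tilde{F}'=\tilde{L}+\tilde{E}_j$ is a singular fiber and $\tilde{L}$ is its component meeting $\tilde{Q}$. So $\tilde{L}$ plays exactly the role of some $\tilde{E}_i'$ in the first case.

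To make this precise, I would contract one component from each singular fiber of the new fibration, chosen disjoint from $\tilde{Q}$. For the fiber $\tilde{L}+\tilde{E}_j$ this component is $\tilde{E}_j$. The result is a Hirzebruch surface, which must be $\F_m$ since $(\tilde{Q})^2=-m$ is unchanged. Hence $\tilde{S}$ has a second description as $\F_m$ blown up at $n$ general points. Alternatively, one could use Lemma \ref{lem: CP20} and the symmetric roles of the $(-1)$-curves meeting the conic. Once this is in place, the first case applies verbatim.

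I expect the main obstacle to be this second case. One must verify that $|\tilde{F}'|$ really is a pencil without base points or extra fixed components, and identify all its singular fibers, so that there are exactly $n$ singular fibers of the form (curve disjoint from $\tilde{Q}$) $+$ (curve meeting $\tilde{Q}$). I would do this by enumerating, via Lemma \ref{lem: (-1)-curve (1)}, the $(-1)$-curves $\tilde{C}$ with $(\tilde{C}\cdot\tilde{F}')=0$ and pairing them up.
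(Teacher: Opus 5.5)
Your proposal is correct. Your first case is the paper's argument: contract $\tilde{L}=\tilde{E}_i'$, then the remaining $\tilde{E}_j$, to reach $\F_{m-1}$. Your general-position check via $(C)^2=(\tilde{C})^2+(\tilde{C}\cdot\tilde{L})^2$ fills in a step the paper only asserts.

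In the second case the paper does not change rulings. It writes down the curves $\tilde{C}_i\sim\tilde{Q}+m\tilde{F}-(\tilde{E}_1+\dots+\tilde{E}_{m+2})+\tilde{E}_i$ for $i\le m+2$, without spelling out why they exist. It checks numerically that they are pairwise disjoint and disjoint from $\tilde{L}$ and $\tilde{Q}$, and contracts them after $\tilde{L}$ to land on $\F_{m-1}$. Your route ends in exactly this contraction. For $j=m+3$ one has $\tilde{F}'-\tilde{C}_i\sim\tilde{F}-\tilde{E}_i\sim\tilde{E}_i'$, so the singular fibres of $|\tilde{F}'|$ are $\tilde{C}_i+\tilde{E}_i'$ ($i\le m+2$) and $\tilde{L}+\tilde{E}_{m+3}$. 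Applying your first case then means contracting $\tilde{L}$ and afterwards $\tilde{C}_1,\dots,\tilde{C}_{m+2}$. The paper's version is shorter and purely numerical. Yours explains where these curves come from and reduces everything to a single case.

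The obstacle you anticipate is milder than you expect:
\begin{itemize}
\item $\tilde{L}+\tilde{E}_j$ is nef, since each component has intersection $0$ with $\tilde{F}'$, and $(\tilde{F}')^2=0$.
\item Riemann--Roch gives $h^0(\tilde{F}')\ge 2$.
\item The fixed part of $|\tilde{F}'|$ is $0$, because removing $\tilde{L}$ or $\tilde{E}_j$ leaves the rigid class of a $(-1)$-curve. Hence $|\tilde{F}'|$ is a base-point-free pencil with $\tilde{Q}$ as a section.
\item Every other negative curve is a $(-1)$-curve, so Zariski's lemma forces each singular fibre to consist of two $(-1)$-curves, exactly one of which meets $\tilde{Q}$.
\item There are $\rho(\tilde{S})-2=n$ such fibres.
\end{itemize}
So no enumeration via Lemma \ref{lem: (-1)-curve (1)} is needed.
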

\begin{proof}
Assume that $\tilde{L} \sim \tilde{F} - \tilde{E}_i$ for some $i=1,\dots,n$. 
Note that $\tau_{1,\ast}(\tilde{E}_1 + \dots + \tilde{E}_n - \tilde{E}_i)$ is a disjoint union of $(-1)$-curves. 
Hence, we obtain the contraction $\tau_2: \tilde{S}^{(1)} \to \tilde{S}^{(2)}$ of this disjoint union. 
Since $\rho(\tilde{S}^{(2)})=2$ and $(\tau_2 \circ \tau_1)_{\ast}(\tilde{Q})$ is a $-(m-1)$-curve, $\tilde{S}^{(2)}$ is isomorphic to the Hirzebruch surface of degree $m-1$ with the minimal section $(\tau_2 \circ \tau_1)_{\ast}(\tilde{Q})$. 
In particular, $\tilde{S}^{(2)} \cong \F_{m-1}$. 
Consider the following commutative diagram: 
\begin{align*}
\xymatrix@C=24pt@R=24pt{
\tilde{S}_m^n \ar[r]^{\tau_1} \ar[d]_\pi & \tilde{S}^{(1)}\ar[d]_{\pi_1} \ar[r]^{\tau_2} & \F_{m-1} \ar[d] \\
S_m^n \ar[r]^{\sigma_1} & S^{(1)} \ar[r] & \bP(1,1,m-1) 
}
\end{align*}
We know that $S^{(1)}$ is obtained by the blowing-up of $\bP(1,1,m-1)$ at $n-1$ distinct smooth points in general position. In particular, $S^{(1)} \cong S_{m-1}^{n-1}$. 

In what follows, we assume $\tilde{L} \not\sim \tilde{F} - \tilde{E}_i$ for any $i=1,\dots,n$. 
By Lemma \ref{lem: (-1)-curve (1)}, $n = m+3$ and $\tilde{L} \sim \tilde{Q} + (m+1)\tilde{F} - (\tilde{E}_1 + \dots + \tilde{E}_{m+3})$. 
For $i=1,\dots,m+2$, let $\tilde{C}_i$ be the $(-1)$-curve such that: 
\begin{align*}
\tilde{C}_i \sim \tilde{Q} + m\tilde{F} - (\tilde{E}_1 + \dots + \tilde{E}_{m+2}) + \tilde{E}_i. 
\end{align*}
Set $\tilde{D} := \sum_{i=1}^{m+2}\tilde{C}_i$. 
Then we know $(\tilde{L} \cdot \tilde{D}) = (\tilde{Q} \cdot \tilde{D}) = 0$ and the union $\tilde{D}$ is disjoint and consists only of $(-1)$-curves. 
Hence, we obtain the contraction $\tau_2: \tilde{S}^{(1)} \to \tilde{S}^{(2)}$ of this disjoint union. 
Since $\rho(\tilde{S}^{(2)})=2$ and $(\tau_2 \circ \tau_1)_{\ast}(\tilde{Q})$ is a $-(m-1)$-curve, $\tilde{S}^{(2)}$ is isomorphic to the Hirzebruch surface of degree $m-1$ with the minimal section $(\tau_2 \circ \tau_1)_{\ast}(\tilde{Q})$. 
In particular, $\tilde{S}^{(2)} \cong \F_{m-1}$. 
Thus, we have $S^{(1)} \cong S_{m-1}^{n-1}$ by a similar argument. 
\end{proof}
\begin{lem}\label{lem: (-1)-curve (4)}
With the same notation and assumptions as above, assume further that $n \not= m+4$ and $(\tilde{L} \cdot \tilde{Q}) = 0$. 
Then the following assertions hold: 
\begin{enumerate}
\item If $n = m+1$, then either $S^{(1)} \cong S_m^m$ or $S^{(1)} \cong \overline{S}_m^{m+1}$. 
\item If $n \not= m+1$, then $S^{(1)} \cong S_m^{n-1}$. 
\end{enumerate}
\end{lem}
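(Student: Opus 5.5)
We argue by cases according to Lemma \ref{lem: (-1)-curve (1)}. Since $(\tilde{L}\cdot\tilde{Q})=0$, $\tilde{L}$ is not of the form $\tilde{F}-\tilde{E}_i$, which meets $\tilde{Q}$. A short computation using $(\tilde{Q}\cdot\tilde{Q})=-m$, $(\tilde{Q}\cdot\tilde{F})=1$ and $(\tilde{Q}\cdot\tilde{E}_i)=0$ does the sorting. The class $\tilde{Q}+m\tilde{F}-\sum_{j}\tilde{E}_{i_j}$ (with $m+1$ indices) meets $\tilde{Q}$ in degree $0$. The class $\tilde{Q}+(m+1)\tilde{F}-\sum_j\tilde{E}_{i_j}$ (with $m+3$ indices) meets $\tilde{Q}$ in degree $1$. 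So only two possibilities remain:
\begin{itemize}
\item[(a)] $\tilde{L}=\tilde{E}_i$ for some $i$;
\item[(b)] $n\ge m+1$ and $\tilde{L}\sim \tilde{Q}+m\tilde{F}-(\tilde{E}_{i_1}+\dots+\tilde{E}_{i_{m+1}})$.
\end{itemize}

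In case (a), contracting $\tilde{E}_i$ just undoes one of the blow-ups in $\psi$. The diagram (\ref{comm diag}) then factors through the blow-up of $\mathbb{F}_m$ at the remaining $n-1$ points. Hence $S^{(1)}$ is the blow-up of $\bP(1,1,m)$ at $n-1$ points. These points are still in general position, because a curve of self-intersection $<-1$, or two $(-1)$-curves meeting on $\tilde{Q}$, would pull back to such a configuration on $\tilde{S}_m^n$. So $S^{(1)}\cong S_m^{n-1}$.

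In case (b), we reduce to case (a) by changing the birational morphism to $\mathbb{F}_m$.
\begin{itemize}
\item If $n=m+1$, then $\tilde{L}$ is the strict transform $C'$ of the unique curve in $|\mathscr{O}(m)|$ through all $m+1$ points. By the definition of $\overline{S}_m^{m+1}$, we get $S^{(1)}\cong\overline{S}_m^{m+1}$. This gives the dichotomy in (1).
\item If $n\in\{m+2,m+3\}$, we look for a new set of $n$ disjoint $(-1)$-curves $\tilde{E}^\ast_1,\dots,\tilde{E}^\ast_n$, all disjoint from $\tilde{Q}$, one of which is $\tilde{L}$. We take $\tilde{E}^\ast_{i_j}:=\tilde{F}$-type curves modified appropriately. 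Concretely, we mimic the second half of the proof of Lemma \ref{lem: (-1)-curve (3)}: we choose the curves among the classes $\tilde{E}_k$ for $k\notin\{i_1,\dots,i_{m+1}\}$ together with curves of class $\tilde{Q}+m\tilde{F}-\sum_{j\ne l}\tilde{E}_{i_j}-\tilde{E}_k$ for a fixed $k$ outside the index set.
\end{itemize}

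We then verify three things using the intersection form:
\begin{itemize}
\item the chosen curves are pairwise disjoint $(-1)$-curves;
\item they are disjoint from $\tilde{Q}$;
\item $\tilde{L}$ is among them.
\end{itemize}
Contracting them gives a smooth surface of Picard rank $2$ containing the image of $\tilde{Q}$ as a $(-m)$-curve, hence $\mathbb{F}_m$. This gives a new morphism $\psi^\ast$ in which $\tilde{L}$ is one of the exceptional curves, and case (a) applies to give $S^{(1)}\cong S_m^{n-1}$.

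The main obstacle is this last step: writing down explicitly, for $n=m+2$ and $n=m+3$, a second "exceptional basis" that contains $\tilde{L}$. Equivalently, we must realize the Cremona-type symmetry of $\mathrm{Cl}(\tilde{S}_m^n)$ fixing $\tilde{Q}$ and $\tilde{F}$ up to the appropriate change, and check that each chosen class is represented by an irreducible $(-1)$-curve. For irreducibility we would appeal to Lemma \ref{lem: (-1)-curve (1)} together with the fact that $(C^2)=(C\cdot K)=-1$ plus effectivity forces an irreducible $(-1)$-curve when the points are in general position.

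Finally, we check that general position is preserved under the new description; this is the same argument as in case (a).
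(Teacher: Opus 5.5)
Your proposal is correct and follows the paper's proof. It uses the same case split via Lemma~\ref{lem: (-1)-curve (1)}, and for $n\in\{m+2,m+3\}$ the paper contracts $\tilde{L}$ together with exactly your curves $\tilde{C}_l\sim\tilde{Q}+m\tilde{F}-(\tilde{E}_1+\dots+\tilde{E}_{m+1})-\tilde{E}_n+\tilde{E}_l$ ($l=1,\dots,m+1$), plus $\tilde{E}_{m+2}$ when $n=m+3$, to reach $\F_m$. The one point to tighten is that the fixed index $k=n$ must be excluded from the $\tilde{E}_k$'s, since $(\tilde{C}_l\cdot\tilde{E}_n)=1$; with that, the step you flag as the main obstacle is a routine intersection check.
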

\begin{proof}
In (1), assume that $\tilde{L} \sim \tilde{E}_i$ for some $i=1,\dots,m+1$. 
Then $S^{(1)}$ is obtained by the blowing-up of $\bP(1,1,m)$ at $m$ distinct smooth points in general position. In particular, $S^{(1)} \cong S_m^m$. 

In what follows, we assume $\tilde{L} \not\sim \tilde{E}_i$ for any $i=1,\dots,m+1$. 
Then $\tilde{L} \sim \tilde{Q} + m\tilde{F} - (\tilde{E}_1 + \dots + \tilde{E}_{m+1})$ by Lemma \ref{lem: (-1)-curve (1)}. 
By definition of $\overline{S}_m^{m+1}$, we know $S^{(1)} \cong \overline{S}_m^{m+1}$. 

In (2), if $\tilde{L} \sim \tilde{E}_i$ for some $i=1,\dots,n$, then we know $S^{(1)} \cong S_m^{n-1}$ because $S^{(1)}$ is obtained by the blowing-up of $\bP(1,1,m)$ at $n-1$ distinct smooth points in general position. 

Hence, we assume $\tilde{L} \not\sim \tilde{E}_i$ for any $i=1,\dots,n$ in what follows. 
Then $n \in\{m+2,m+3\}$ and $\tilde{L} \sim \tilde{Q} + m\tilde{F} - (\tilde{E}_{i_1} + \dots + \tilde{E}_{i_{m+1}})$ for some $i_1,\dots,i_
{m+1}$ with $1 \le i_1 < i_2 < \dots < i_{m+1} \le n$ by Lemma \ref{lem: (-1)-curve (1)}. 
We may assume $i_j=j$ for $j=1,\dots,m+1$ by swapping the subscripts. 
For $i=1,\dots,m+1$, let $\tilde{C}_i$ be the $(-1)$-curve such that: 
\begin{align*}
\tilde{C}_i \sim \tilde{Q} + m\tilde{F} - (\tilde{E}_1 + \dots + \tilde{E}_{m+1}) - \tilde{E}_n + \tilde{E}_i. 
\end{align*}
Set $\tilde{D} := \sum_{i=1}^{m+1}\tilde{C}_i$ (resp. $\tilde{D} := \sum_{i=1}^{m+1}\tilde{C}_i + \tilde{E}_{m+2}$) if $n=m+2$ (resp. $n=m+3$). 
We know $(\tilde{L} \cdot \tilde{D}) = (\tilde{Q} \cdot \tilde{D}) = 0$, and the union $\tilde{D}$ is disjoint and consists only of $(-1)$-curves. 
Hence, we obtain the contraction $\tau_2: \tilde{S}^{(1)} \to \tilde{S}^{(2)}$ of this disjoint union. 
Since $\rho(\tilde{S}^{(2)})=2$ and $(\tau_2 \circ \tau_1)_{\ast}(\tilde{Q})$ is a $(-m)$-curve, $\tilde{S}^{(2)}$ is isomorphic to the Hirzebruch surface of degree $m$ with the minimal section $(\tau_2 \circ \tau_1)_{\ast}(\tilde{Q})$. 
In particular, $\tilde{S}^{(2)} \cong \F_m$. 
Consider the following commutative diagram: 
\begin{align*}
\xymatrix@C=24pt@R=24pt{
\tilde{S}_m^n \ar[r]^{\tau_1} \ar[d]_\pi & \tilde{S}^{(1)}\ar[d]_{\pi_1} \ar[r]^{\tau_2} & \F_m \ar[d] \\
S_m^n \ar[r]^{\sigma_1} & S^{(1)} \ar[r] & \bP(1,1,m) 
}
\end{align*}
We know that $S^{(1)}$ is obtained by the blowing-up of $\bP(1,1,m)$ at $n-1$ distinct smooth points in general position. In particular, $S^{(1)} \cong S_m^{n-1}$. 
\end{proof}
\begin{lem}\label{lem: (-1)-curve (5)}
With the same notation and assumptions as above, assume further that $n = m+4$. 
Then the following assertions hold: 
\begin{enumerate}
\item If $(\tilde{L} \cdot \tilde{Q}) = 1$, then $S^{(1)} \cong S_{m-1}^{m+3}$. 
\item If $(\tilde{L} \cdot \tilde{Q}) = 0$, then $S^{(1)} \cong S_m^{m+3}$. 
\end{enumerate}
\end{lem}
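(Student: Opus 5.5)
The approach follows Lemmas \ref{lem: (-1)-curve (3)} and \ref{lem: (-1)-curve (4)}. After contracting $\tilde{L}$ by $\tau_1$, we look for a disjoint union $\tilde{D}$ of $(-1)$-curves on $\tilde{S}_m^{m+4}$ with four properties:
\begin{itemize}
\item $\tilde{D}$ has $m+3$ components;
\item $(\tilde{D}\cdot\tilde{L})=0$;
\item $(\tilde{D}\cdot\tilde{Q})=0$;
\item the images of the components of $\tilde{D}$ in $\tilde{S}^{(1)}$ are still disjoint $(-1)$-curves.
\end{itemize}
Contracting the image of $\tilde{D}$ by $\tau_2$ then gives a smooth surface $\tilde{S}^{(2)}$ with $\rho(\tilde{S}^{(2)})=2$. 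It contains the curve $(\tau_2\circ\tau_1)_\ast(\tilde{Q})$, which is a $-(m-(\tilde{L}\cdot\tilde{Q}))$-curve. Since $m-(\tilde{L}\cdot\tilde{Q})\ge 1$, we get $\tilde{S}^{(2)}\cong\F_{m-1}$ in case (1) and $\tilde{S}^{(2)}\cong\F_m$ in case (2). Hence $S^{(1)}$ is the blow-up of $\bP(1,1,m-1)$, respectively $\bP(1,1,m)$, at $m+3$ smooth points, exactly as in the commutative diagrams of the preceding lemmas. Finally, general position of these points follows because every curve on $\tilde{S}^{(1)}$ comes from $\tilde{S}_m^{m+4}$, and the general-position conditions of Definition \ref{def:surface} transfer along the contractions; the earlier lemmas use the same reasoning.

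Case (1): $(\tilde{L}\cdot\tilde{Q})=1$. By Lemma \ref{lem: (-1)-curve (2)}(3), $\tilde{L}$ is $\tilde{E}_i'\sim\tilde{F}-\tilde{E}_i$ or $\tilde{E}_i''$.
\begin{itemize}
\item If $\tilde{L}=\tilde{E}_i'$, take $\tilde{D}=\sum_{j\ne i}\tilde{E}_j$. This is clearly disjoint from $\tilde{Q}$ and from $\tilde{E}_i'$, so $\tilde{S}^{(2)}\cong\F_{m-1}$.
\item If $\tilde{L}=\tilde{E}_i''$, I would instead change the $\bP^1$-fibration. The curves $\tilde{E}_j''$ ($j\ne i$) all satisfy $(\tilde{E}_j''\cdot\tilde{Q})=1$. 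Compute $(\tilde{E}_i''\cdot\tilde{E}_j'')$ from the class formula in Lemma \ref{lem: (-1)-curve (2)}(2), and check whether $\tilde{E}_i''+\tilde{E}_j''$ or a related combination is a fibre of another $\bP^1$-fibration. A cleaner alternative is a Cremona-type symmetry: the class map swapping $\tilde{E}_j'\leftrightarrow\tilde{E}_j''$ and fixing $\tilde{Q}$ preserves the intersection form and $K$. If one shows it is induced by another birational morphism $\tilde{S}_m^{m+4}\to\F_m$, with exceptional curves $\{\tilde{E}_j''-\text{type}\}$, the case $\tilde{E}_i''$ reduces to the case $\tilde{E}_i'$.
\end{itemize}
Concretely, I would take the $m+3$ curves with classes $\tilde{Q}+(m+1)\tilde{F}-\sum_k\tilde{E}_k+\tilde{E}_i-(\tilde{F}-\tilde{E}_j)$ adjusted suitably. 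I would then verify that the $m+4$ curves of classes $\tilde{F}-\tilde{E}_i''$-analogues form the exceptional locus, checking pairwise intersection $0$ and intersection $0$ with $\tilde{Q}$ by direct computation.

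Case (2): $(\tilde{L}\cdot\tilde{Q})=0$. By Lemma \ref{lem: (-1)-curve (2)}(4), $\tilde{L}$ meets exactly one of $\tilde{E}_j',\tilde{E}_j''$ for each $j$, and meets it once. For each $j$, let $G_j$ be the one of $\tilde{E}_j',\tilde{E}_j''$ disjoint from $\tilde{L}$, and let $G_j^\ast$ be the other one. Then $G_j+G_j^\ast$ looks like a degenerate fibre, and I would aim to show that $\tilde{L}+G_{j_0}$-type data yields a birational morphism to $\F_m$ with $\tilde{L}$ among its exceptional curves. That is, find a set of $m+4$ disjoint $(-1)$-curves containing $\tilde{L}$, all disjoint from $\tilde{Q}$, whose contraction lands on $\F_m$. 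Then $\tilde{D}$ is the other $m+3$ curves, and $S^{(1)}$ is $\bP(1,1,m)$ blown up at the remaining $m+3$ points.

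The main obstacle is this case (and the subcase $\tilde{L}=\tilde{E}_i''$): the non-classical $(-1)$-curves, as in the remark after Lemma \ref{lem: (-1)-curve (1)}, must be handled without an explicit list. My first attempt would be the reflection/Weyl-group argument. On $\tilde{S}_m^{m+4}$ the orthogonal complement of $\langle\tilde{Q},K\rangle$ is a root lattice, and reflections in roots act transitively on $(-1)$-classes with a given value of $(\cdot\,\tilde{Q})$. I would realise the needed transitivity geometrically using Lemma \ref{lem: (-1)-curve (2)}(4). If that fails, I would fall back on citing \cite[Lemma 3.8]{KLS26} for the structure of such curves.
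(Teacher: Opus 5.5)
Your overall frame is sound, and the subcase $\tilde L=\tilde E_i'$ is handled correctly. The gap is in the two situations that carry the content of the lemma: $\tilde L=\tilde E_i''$ in (1), and all of (2). There you never produce the $m+3$ curves forming $\tilde D$. You list strategies instead, and the ones you single out do not work as stated:
\begin{itemize}
\item One has $(\tilde E_i''\cdot\tilde E_j'')=0$, so $\tilde E_i''+\tilde E_j''$ is not a fibre.
\item The global swap $\tilde E_j'\leftrightarrow\tilde E_j''$ fixing $\tilde Q$ is not even an automorphism of $\Cl(\tilde S_m^{m+4})$ when $m$ is odd. Suppose $\Phi=x\tilde Q+y\tilde F-\sum_k z_k\tilde E_k$ satisfies $(\Phi\cdot\tilde Q)=1$ and $(\Phi\cdot\tilde E_j'')=0$ for all $j$. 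Then $z_1=\dots=z_{m+4}=:z$ and $(m+3)z=1+(m+1)x$, which has no integer solution for $m$ odd. So no $\bP^1$-fibration with section $\tilde Q$ contains all the $\tilde E_j''$ in fibres.
\item In (2), $\tilde L+G_{j_0}$ cannot be a fibre, since $G_{j_0}$ is disjoint from $\tilde L$ by your own definition.
\item Weyl-group transitivity on classes proves nothing until it is realised by actual curves and contractions, which is exactly the missing step.
\end{itemize}

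The missing ingredient is just the right auxiliary fibration. If $\tilde L=\tilde E_i''$, then for any $k\neq i$ one has $(\tilde E_i''\cdot\tilde E_k)=1$ and $(\tilde E_k\cdot\tilde Q)=0$. Hence $|\tilde E_i''+\tilde E_k|$ is a $\bP^1$-fibration with section $\tilde Q$ in which $\tilde E_i''$ is the component of a degenerate fibre meeting $\tilde Q$. This reduces to your subcase $\tilde L=\tilde E_i'$.

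In (2), use $G_{j_0}^\ast$, the member of $\{\tilde E_{j_0}',\tilde E_{j_0}''\}$ that meets $\tilde L$. Then $|\tilde L+G_{j_0}^\ast|$ is a $\bP^1$-fibration with section $\tilde Q$, and $\tilde L$ is a fibre component disjoint from $\tilde Q$. Contracting all such components gives $\F_m$.

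The paper argues differently, through the pencil $|\tilde\Gamma|$ of class $\tilde Q+(m+2)\tilde F-\sum_k\tilde E_k$. Its singular fibres are $\tilde E_j'+\tilde E_j''$, and $\tilde Q$ is a bisection.
\begin{itemize}
\item In (2), after contracting $\tilde L$, your curves $G_j$ are $m+4$ disjoint $(-1)$-curves, each meeting $\tau_{1,\ast}(\tilde Q)$ once. Contracting them gives a smooth rational surface of Picard rank one, hence $\bP^2$, with $\tilde Q$ mapped to a conic. Lemma \ref{lem: CP20} then identifies $\tilde S^{(1)}$.
\item In (1), the partner of $\tilde L$ becomes a $0$-curve, and $\tau_{1,\ast}(\tilde Q)$ is a bisection of the resulting fibration. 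Contracting $\tau_{1,\ast}(\tilde E_1'),\dots,\tau_{1,\ast}(\tilde E_{m+3}')$ leads, via a degree $8$ del Pezzo surface, to $\bP^2$ with $\tilde Q$ again a conic.
\end{itemize}
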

\begin{proof}
In (1), note that either $\tilde{L} = \tilde{E}_i'$ or $\tilde{L} = \tilde{E}_i''$ for some $i=1,\dots,m+4$ by Lemma \ref{lem: (-1)-curve (2)} (3). 
We may assume that either $\tilde{L} = \tilde{E}_{m+4}'$ or $\tilde{L} = \tilde{E}_{m+4}''$ by swapping the subscripts.  
Set $\tilde{F}^{(1)} := \tau_{1,\ast}(\tilde{E}_{m+4}'')$ (resp. $\tilde{F}^{(1)} := \tau_{1,\ast}(\tilde{E}_{m+4}')$) when $\tilde{L} = \tilde{E}_{m+4}'$ (resp. $\tilde{L} = \tilde{E}_{m+4}''$). 
Since $\tilde{F}^{(1)}$ is a $0$-curve on $\tilde{S}^{(1)}$, we know that $|\tilde{F}^{(1)}|$ defines a $\bP^1$-fibration $\Phi_{|\tilde{F}^{(1)}|}: \tilde{S}^{(1)} \to \bP^1$, $\tau_{1,\ast}(\tilde{Q})$ becomes a bi-section of $\Phi_{|\tilde{F}^{(1)}|}$ and each component of $\tau_{1,\ast}\left(\sum_{i=1}^{m+3}(\tilde{E}_i'+\tilde{E}_i'')\right)$ is a fiber component of $\Phi_{|\tilde{F}^{(1)}|}$. 
Let $\tau_2: \tilde{S}^{(1)} \to \tilde{S}^{(2)}$ be the contraction of $\tau_{1,\ast}(\tilde{E}_1'),\dots,\tau_{1,\ast}(\tilde{E}_{m+3}')$. Then $(\tau_2 \circ \tau_1)_{\ast}(\tilde{Q})$ is a $4$-curve and $\rho(\tilde{S}^{(2)}) = 2$. 
Hence, $\tilde{S}^{(2)}$ is a smooth del Pezzo surface of degree $8$; in particular, either $\tilde{S}^{(2)} \cong \F_1$ or $\tilde{S}^{(2)} \cong \bP^1 \times \bP^1$. 
Here, we may assume that $\tilde{S}^{(2)} \cong \F_1$. 
Indeed, when $\tilde{S}^{(2)} \cong \bP^1 \times \bP^1$, we instead
contract $\tau_{1,\ast}(\tilde{E}_{m+3}'')$ rather than $\tau_{1,\ast}(\tilde{E}_{m+3}')$. 
Let $\tau_3: \tilde{S}^{(2)} \to \bP^2$ be the contraction of the unique $(-1)$-curve. 
Then we have the following diagram: 
\begin{align*}
\xymatrix@C=24pt@R=24pt{
\tilde{S}_m^{m+4} \ar[r]^{\tau_1} & \tilde{S}^{(1)}\ar[r]^{\tau_2} & {\F_1} \ar[r]^{\tau_3} & \bP^2 
}
\end{align*}
Notice that $(\tau_3 \circ \tau_2 \circ \tau_1)_{\ast}(\tilde{Q})$ is an irreducible conic on $\bP^2$. 
Hence, $\tilde{S}^{(1)} \cong \tilde{S}_{m-1}^{m+3}$ by Lemma \ref{lem: CP20} combined with the birational morphism $\tau_3 \circ \tau_2$. 

In (2), there are exactly $m+4$ $(-1)$-curves on $\tilde{S}^{(1)}$ intersecting $\tau_{1,\ast}(\tilde{Q})$ because $(\tilde{L} \cdot \tilde{E}_i' + \tilde{E}_i'') = 1$ for every $i=1,\dots,m+4$ by Lemma \ref{lem: (-1)-curve (2)} (3) and (4). 
Let $\tau_2: \tilde{S}^{(1)} \to \tilde{S}^{(2)}$ be the contraction of these $(-1)$-curves. 
Then $\tilde{S}^{(2)} \cong \bP^2$ because $\tilde{S}^{(2)}$ is a smooth projective rational surface with $(-K_{\tilde{S}^{(2)}})^2 = 9$. 
We have the following diagram: 
\begin{align*}
\xymatrix@C=24pt@R=24pt{
\tilde{S}_m^{m+4} \ar[r]^{\tau_1} & \tilde{S}^{(1)}\ar[r]^{\tau_2} & {\bP^2} 
}
\end{align*}
Since $(\tau_2 \circ \tau_1)_{\ast}(\tilde{Q})$ is an irreducible conic on $\bP^2$, we know that $\tilde{S}^{(1)} \cong \tilde{S}_m^{m+3}$ by Lemma \ref{lem: CP20} combined with the birational morphism $\tau_2$. 
\end{proof}
We finally obtain the following result: 
\begin{prop}\label{prop}
Assume that $n \le m+4$. 
Let $\pi:\tilde{S}_m^n \to S_m^n$ be the minimal resolution, let $\tilde{Q}$ be the exceptional curve of $\pi$, let $\tau': \tilde{S}_m^n \to \tilde{S}'$ be a contraction of some $(-1)$-curves such that $\tilde{Q}' := \tau'_{\ast}(\tilde{Q})$ can be contracted, and let $\pi': \tilde{S}' \to S'$ be the contraction of $\tilde{Q}'$. 
Then $S'$ is isomorphic to either $S_{m'}^{n'}$ or $\overline{S}_{m'}^{m'+1}$, where $m'$ and $n'$ are integers with $1 \le m' \le m$ and $0 \le n' < n$, $S_1^n$ is obtained by blowing-up of $\bP^2$ at $n$ points in general position, and $S_{m'}^0 := \bP(1,1,m')$. 
\end{prop}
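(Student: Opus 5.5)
We argue by induction on the number $k$ of $(-1)$-curves contracted by $\tau'$, contracting them one at a time and applying Lemmas \ref{lem: (-1)-curve (3)}, \ref{lem: (-1)-curve (4)} and \ref{lem: (-1)-curve (5)} at each step. Since $\tilde{Q}'$ is contractible, its self-intersection is negative throughout. First we factor $\tau'$ as a sequence of blow-downs $\tilde{S}_m^n=\tilde{S}^{(0)}\to\tilde{S}^{(1)}\to\cdots\to\tilde{S}^{(k)}=\tilde{S}'$, each contracting a single $(-1)$-curve $\tilde{L}_j$ on $\tilde{S}^{(j-1)}$. Let $\tilde{Q}^{(j)}$ be the image of $\tilde{Q}$. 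Because $(\tilde{Q}^{(j)})^2$ never decreases and is negative at the end, it stays negative at every intermediate stage. So each $\tilde{Q}^{(j)}$ is contractible, giving normal surfaces $S^{(j)}$ and birational morphisms $S^{(j-1)}\to S^{(j)}$ contracting the image of $\tilde{L}_j$. Also $\tilde{S}^{(j)}$ is the minimal resolution of $S^{(j)}$, since $\tilde{Q}^{(j)}$ is a smooth rational curve of self-intersection $\le -2$. The exception is self-intersection $-1$; in that case $S^{(j)}$ is smooth, and we must treat it as $m'=1$.

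The inductive claim is that each $S^{(j)}$ is isomorphic to some $S_{m_j}^{n_j}$ with $n_j\le m_j+4$, or to $\overline{S}_{m_j}^{m_j+1}$. The invariants satisfy $m_j\le m$ and $n_j=n-j$, with $m_j$ dropping by one exactly when $(\tilde{L}_j\cdot\tilde{Q}^{(j-1)})=1$.

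For a step starting from $S_{m_j}^{n_j}$ with $m_j\ge 2$, Lemmas \ref{lem: (-1)-curve (1)} and \ref{lem: (-1)-curve (2)} give $(\tilde{L}_j\cdot\tilde{Q}^{(j-1)})\le 1$. Lemmas \ref{lem: (-1)-curve (3)}, \ref{lem: (-1)-curve (4)} and \ref{lem: (-1)-curve (5)} then identify $S^{(j)}$ as $S_{m_j-1}^{n_j-1}$, $S_{m_j}^{n_j-1}$, or $\overline{S}_{m_j}^{m_j+1}$; the last occurs only when $n_j=m_j+1$. The bound $n_j\le m_j+4$ is preserved, since $n$ and $m$ drop by at most one together.

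The base of the construction is $n_j=0$, where the surface is $\bP(1,1,m_j)$ and any $\tilde{L}_j$ would be a $(-1)$-curve on $\F_{m_j}$. There is none, so the process stops there. When $m_j$ drops to $1$, we interpret $S_1^{n_j}$ as $\bP^2$ blown up at $n_j$ general points, as in the statement. At that point $\tilde{Q}^{(j)}$ is a $(-1)$-curve, and since $\tilde{Q}'$ must be contractible with negative square, no further step can meet it. Further contractions stay among blow-ups of $\bP^2$ at fewer general points; once $\tilde{Q}$ has been contracted, these are still of the form $S_1^{n'}$.

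There are two obstacles. The first is the case where $S^{(j)}\cong\overline{S}_{m_j}^{m_j+1}$ and a further $(-1)$-curve is contracted. We must check that such a curve either meets $\tilde{Q}^{(j)}$, producing some $\overline{S}_{m_j-1}^{m_j}$-type or $S$-type surface, or is disjoint from it. I would try to avoid describing $(-1)$-curves on $\overline{S}$ directly. Instead, use the freedom of the order of contractions: since all contracted curves are $(-1)$-curves on successive surfaces, one can reorder so that the curve producing $\overline{S}$ (the class $\tilde{Q}+m\tilde{F}-\sum\tilde{E}_i$) is contracted last. Alternatively, use the remark that $\overline{S}_m^{m+1}$ is $\bP^2$ blown up along a line with the line contracted, and reduce to the $S_{m'}^{n'}$ description.

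The second obstacle is justifying that general position is inherited at each stage. This is already built into the proofs of the cited lemmas, which produce blow-ups at general points. The strict inequality $n'<n$ holds whenever at least one curve is contracted; the case $\tau'=\mathrm{id}$ should be excluded or read with $n'\le n$.
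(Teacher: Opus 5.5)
Your strategy is the paper's own. Its entire proof is ``repeatedly apply Lemmas~\ref{lem: (-1)-curve (3)}, \ref{lem: (-1)-curve (4)} and \ref{lem: (-1)-curve (5)}''. The two situations you flag, continuing after an $\overline{S}$-step and continuing once $m_j=1$, are genuinely outside those lemmas, and the paper does not address them either. As written, however, you leave the first open and get the second wrong.

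\textbf{The $\overline{S}$-step.} Your justification for reordering, ``all contracted curves are $(-1)$-curves on successive surfaces'', does not suffice. A blow-down sequence can contain chains, for instance a $(-2)$-curve that becomes a $(-1)$-curve only after its neighbour is contracted, and such steps cannot be permuted. The missing idea is general position. Let $C_j\subset\tilde{S}_m^n$ be the strict transform of the $j$-th contracted curve $\tilde{L}_j$. Then $C_j\neq\tilde{Q}$ and $(C_j)^2\le(\tilde{L}_j)^2=-1$, with equality iff $\tilde{L}_j$ avoids every previously blown-down point. Equality holds, because every negative curve on $\tilde{S}_m^n$ other than $\tilde{Q}$ is a $(-1)$-curve. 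This gives the following facts:
\begin{itemize}
\item $\tau'$ contracts pairwise disjoint $(-1)$-curves of $\tilde{S}_m^n$, so the order of contraction is free;
\item each $(C_j\cdot\tilde{Q})\in\{0,1\}$ is preserved along the way;
\item at most $m-1$ of the $C_j$ meet $\tilde{Q}$.
\end{itemize}

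Now contract first the $C_j$ disjoint from $\tilde{Q}$, using Lemmas~\ref{lem: (-1)-curve (4)} and \ref{lem: (-1)-curve (5)}(2) with $m\ge2$ fixed. An $\overline{S}$-step can only be the last step of this phase. Indeed, by Lemma~\ref{lem: (-1)-curve (1)}, the only $(-1)$-curves on $\tilde{S}_m^{m+1}$ disjoint from $\tilde{Q}+m\tilde{F}-\sum_{i=1}^{m+1}\tilde{E}_i$ are the $\tilde{E}_i'$, and these meet $\tilde{Q}$.

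Then contract the $C_j$ meeting $\tilde{Q}$. From $S_{m_j}^{n_j}$, use Lemma~\ref{lem: (-1)-curve (3)} or \ref{lem: (-1)-curve (5)}(1); $m_j\ge2$ holds automatically because there are at most $m-1$ such curves. From $\overline{S}_{m_j}^{m_j+1}$, use the line model. A $(-1)$-curve $dh-\sum a_ie_i$ on $\bP^2$ blown up at $m_j+1$ collinear points, with $d\ge1$, would need $\sum a_i=3d-1$ and $d-\sum a_i\ge0$, which is impossible. So the only $(-1)$-curves are the exceptional curves $e_i$, each meeting $\tilde{Q}$ once. This gives $\overline{S}_{m_j}^{m_j+1}\to\overline{S}_{m_j-1}^{m_j}$, which stays in the list.

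\textbf{The case $m_j=1$.} Your claim that further contractions then give surfaces of the form $S_1^{n'}$ is false. Take $m=2$, $n=3$. The curves $\tilde{E}_1'$ and $\tilde{Q}+2\tilde{F}-\tilde{E}_1-\tilde{E}_2-\tilde{E}_3$ are disjoint $(-1)$-curves, and the image of $\tilde{Q}$ stays a $(-1)$-curve. Contracting $\tilde{E}_1'$ gives $S_1^2$. On $S_1^2$ the second curve becomes the line through the two blown-up points, and contracting it yields $\bP^1\times\bP^1$, which is not a blow-up of $\bP^2$. The proposition survives only because $\bP^1\times\bP^1\cong\overline{S}_1^2$. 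With the ordering above you never work on $S_1^{n'}$ at all, and this example appears as $\overline{S}_2^3\to\overline{S}_1^2$.
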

\begin{proof}
It follows by repeatedly applying Lemmas \ref{lem: (-1)-curve (3)}, \ref{lem: (-1)-curve (4)} and \ref{lem: (-1)-curve (5)}. 
\end{proof}
\subsection{Fujita invariants}\label{2-1}
In this subsection, we summarize several notations on the Fujita invariant of log del Pezzo surfaces. 
The Fujita invariant was originally introduced by Fujita in the form of its negative, under the name of Kodaira energy ({\cite{Fuj87,Fuj92,Fuj96,Fuj97}}). 
Hassett, Tanimoto and Tschinkel defined the Fujita invariant of a log pair to study Manin’s conjecture ({\cite[Definition 2.2]{HTT15}}). 
Cheltsov, Park, and the third author established a method for applying this invariant to the study of polar cylinders on log del Pezzo surfaces ({\cite[Section 2.1]{CPW17}}).

Let $X$ be a log del Pezzo surface and let $A$ be a big $\Q$-divisor on $X$. 
By the Cone theorem, the Mori cone $\NE(X)$ of $X$ is polyhedral. 

We say that the following value: 
\begin{align*}
\mu_A := \inf \{ \lambda \in \R_{>0}\,|\, \text{the $\Q$-divisor $K_X+\lambda A$ is pseudo-effective}\}
\end{align*}
is called the {\it Fujita invariant} of the pair $(X,A)$. 
The smallest extremal face $\Delta_A$ of the Mori cone $\NE(X)$ containing $K_S+\mu_AA$ is called the {\it Fujita face} of $A$. 
Moreover, we say that $r_A := \dim \Delta_A$ is called the {\it Fujita rank} of $(X,A)$. 

Let $\phi_A:X \to Z$ be the contraction given by the Fujita face $\Delta_A$ of the divisor $A$. 
Then either $\phi_A$ is a birational morphism or a conic bundle with $Z \cong \bP^1$. 
In the former case, the $\Q$-divisor $A$ is said to be of type ${\rm B}$; and in the latter case, it is said to be of type ${\rm C}$. 
The distinction between types ${\rm B}$ and ${\rm C}$ can also be understood in terms of the Zariski decomposition of $K_X + \mu_AA$. More precisely, in Type ${\rm B}$ (resp. Type ${\rm C}$), the positive part is trivial (resp. coincides with a $0$-curve on $X$). 
\medskip

For an integer $m\geq 2$ consider a log del Pezzo surface $S$ obtained by blowing up the weighted projective plane $\mathbb{P}(1,1,m)$ at $m+4$ distinct smooth points $p_1,\ldots,p_{m+4}$ in general position. 
Let $H$ be an ample $\Q$-divisor on $S$. 
Recall that $S$ has a unique singular point ${\sf p}$ of type $\frac{1}{m}(1,1)$. 
\smallskip

Assume that $S$ is of type ${\rm B}$. 
Then we can write: 
\begin{align*}
K_S + \mu_H H \sim_{\Q} \sum_{i=1}^{r_H}a_iL_i,
\end{align*}
where every $a_i$ is a positive number and the union $\sum_{i=1}^{r_H}L_i$ is contractible. 
More precisely, each $L_i$ is one of the following: 
\begin{itemize}
\item $L_i$ is a $(-1)$-curve; i.e, $(L_i)^2 = (L_i \cdot K_S) = -1$. In this case, we have $0<a_i<1$. 
\item $L_i$ passes through the singular point on $S$. Moreover, $(L_i)^2 = -\frac{m-1}{m}$ and $(L_i \cdot K_S) = -\frac{2}{m}$. In this case, we have $0<a_i<\frac{2}{m-1}$. 
\end{itemize}

We say that $H$ is of {\it type ${\rm B}(r_H)$} in what follows. 
We define
\begin{align*}
r_H^{sm} := \# \{ i \in \{ 1,\dots ,r_H\}\,|\, \text{$L_i$ is a $(-1)$-curve}\}.
\end{align*} 
Since $\sum_{i=1}^{r_H}L_i$ is contractible, we know $0 \le r_H \le m+4$. 
Moreover, $0 \le r_H-r_H^{sm} \le m-1$ because there are at most $m-1$ contractible curves passing through ${\sf p}$. 

\smallskip

Assume that $S$ is of type ${\rm C}$. 
Then the Fujita rank $r_H$ of $H$ is equal to $\rho(S)-2 = m+3$ and we can write: 
\begin{align*}
K_S + \mu_H H \sim_{\Q} aB+\sum_{i=1}^{m+3}a_iL_i,
\end{align*}
where $a$ is a positive number, every $a_i$ is a non-negative number, $B$ is a $0$-curve; i.e., $(B)^2=0$ and $(B \cdot K_S) = -2$, and the union  $\sum_{i=1}^{m+3}L_i$ is contractible. 
More precisely, each $L_i$ is one of the following: 
\begin{itemize}
\item $L_i$ is a $(-1)$-curve; i.e, $(L_i)^2 = (L_i \cdot K_S) = -1$. In this case, we have $0 \le a_i<1$. 
\item $L_i$ passes through the singular point on $S$. Moreover, $(L_i)^2 = -\frac{m-1}{m}$ and $(L_i \cdot K_S) = -\frac{2}{m}$. In this case, we have $0 \le a_i<\frac{2}{m-1}$. 
\end{itemize}

Now, $\ell_H$ denotes the number $a_i$'s with $a_i \not=0$. 
Moreover, we say that $H$ is of {\it type ${\rm C}(\ell_H)$} in what follows. 
Then we shall determine singular fibers of a $\bP^1$-fibration $\phi_H:S \to \bP^1$. 
Notice that ${\sf p}$ must be included in a singular fiber of $\phi_H$, say $B_1$. 
Letting $\pi:\tilde{S} \to S$ be the minimal resolution and let $\tilde{Q}$ be the exceptional curve, which is a $(-m)$-curve, every irreducible curve on $\tilde{S}$ with negative self-intersection number is either $\tilde{Q}$ or a $(-1)$-curve. 
Moreover, since $\phi_H$ is a $\bP^1$-fibration, so is $\phi_H \circ \pi:\tilde{S} \to \bP^1$. 
Hence, $\tilde{B}_1 := \pi^{-1}_{\ast}(B_1)$ consists of $\tilde{Q}$ and exactly $m$ $(-1)$-curves, and every singular fiber other than $\tilde{B}_1$ of $\phi_H \circ \pi$ is degenerate conic because it must consist only of several $(-1)$-curves. 
Note that there are exactly four singular fibers, which are degenerate conics, of $\phi_H \circ \pi$ because the total number of irreducible components of singular fibers of $\phi_H \circ \pi$ is equal to 
$\rho(\tilde{S}) - 2 + \#\{ \text{singular fibers of}\ \phi_H \circ \pi\}$. 
Hence, $\phi_H$ has exactly five singular fibers, one passes through the singular point ${\sf p}$ and the others are degenerate conics. 
We define
\begin{align*}
\ell_H^{sm} := \# \{ i \in \{1,\dots ,m+3\}\,|\, \text{$a_i \not= 0$ and $L_i$ is a $(-1)$-curve}\}. 
\end{align*}
Since $\sum_{i=1}^{\ell_H}L_i$ is contractible, we know $0 \le \ell_H^{sm} \le 4$. 
\subsection{Examples of cylinders}\label{2-2}
Let $S$ be a log del Pezzo surface obtained by the blow-up of the weighted projective plane $\bP(1,1,m)$ at $m+4$ points in general position, where $m \ge 2$. 
In this subsection, we will present several examples of cylinders in $S$. 

Note that $S$ has a unique singular point ${\sf p}$ of type $\frac{1}{m}(1,1)$. 
Let $\pi:\tilde{S} \to S$ be the minimal resolution at ${\sf p} \in S$, and let $\tilde{Q}$ be the reduced exceptional curve of $\pi$. 
Note that $\tilde{Q}$ is a single $(-m)$-curve on $\tilde{S}$. 
By construction, there exists a birational morphism $\psi:\tilde{S} \to \F_m$. Let $\tilde{E}_1,\dots,\tilde{E}_{m+4}$ be the exceptional curves of $\psi$. Composing $\psi$ with the natural $\bP^1$-bundle $\F_m\to\bP^1$, we obtain a $\bP^1$-fibration $\varphi:\tilde{S} \to \bP^1$ such that $\tilde{Q}$ is a section. See also the commutative diagram (\ref{comm diag}). 
Notice that $\varphi$ admits exactly $m+4$ singular fibers $\tilde{F}_1,\dots,\tilde{F}_{m+4}$; moreover, for every $i=1,\dots,m+4$ there exists a $(-1)$-curve $\tilde{E}_i'$ on $\tilde{S}$ such that $\tilde{F}_i = \tilde{E}_i+\tilde{E}_i'$ and $(\tilde{E}_i' \cdot \tilde{Q}) = 1$. 
Let $\tilde{F}$ be a general fiber of $\varphi$. 
\begin{lem}\label{curve}
With the same notations as above, there exist a closed point $\tilde{\sf q}$ on $\tilde{Q}$ and a $0$-curve $\tilde{\Gamma}_{\tilde{\sf q}}$ such that: 
\begin{align*}
\tilde{\Gamma}_{\tilde{\sf q}} \sim \tilde{Q} + (m+2)\tilde{F} - \sum_{i=1}^{m+4}\tilde{E}_i
\end{align*}
and $I_{\tilde{\sf q}}(\tilde{\Gamma}_{\tilde{\sf q}},\tilde{Q}) = 2$. 
\end{lem}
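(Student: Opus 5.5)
The plan is to realize the class $\tilde{D} := \tilde{Q} + (m+2)\tilde{F} - \sum_{i=1}^{m+4}\tilde{E}_i$ as the fiber class of a second $\bP^1$-fibration on $\tilde{S}$ for which $\tilde{Q}$ is a bisection. The point $\tilde{\sf q}$ will then be a ramification point of the double cover $\tilde{Q} \to \bP^1$, and $\tilde{\Gamma}_{\tilde{\sf q}}$ will be the fiber over its image.

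\textbf{Numerics.} First I would compute the relevant intersection numbers. With $K_{\tilde{S}} \sim -2\tilde{Q}-(m+2)\tilde{F}+\sum_i \tilde{E}_i$ we get
\[
(\tilde{D})^2 = -m+2(m+2)-(m+4)=0,\quad (\tilde{D}\cdot\tilde{Q})=2,\quad (\tilde{D}\cdot\tilde{F})=1,\quad (\tilde{D}\cdot\tilde{E}_i)=1,\quad (\tilde{D}\cdot K_{\tilde{S}})=-2.
\]

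\textbf{Dimension of the linear system.} On $\F_m$ we have $h^0(\F_m,\mathscr{O}_{\F_m}(\hat{Q}+k\hat{F}))=(k+1)+(k-m+1)$ for $k\ge m$, which gives $m+6$ for $k=m+2$; this is consistent with the value $m+4$ for $k=m+1$ used in Lemma \ref{lem: (-1)-curve (2)}. Imposing the $m+4$ points $\hat{\sf q}_1,\dots,\hat{\sf q}_{m+4}$ then gives $\dim|\tilde{D}|\ge 1$.

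\textbf{The pencil is a $\bP^1$-fibration.} Next I would show that $|\tilde{D}|$ is a base-point-free pencil whose general member is a smooth rational $0$-curve.
\begin{itemize}
\item Suppose $|\tilde{D}|$ had a fixed part. Then the mobile part $M$ would satisfy $(M)^2\ge 0$ and $(M\cdot\tilde{F})\le 1$. The fixed components are negative curves, hence $\tilde{Q}$ or $(-1)$-curves. A short check against the numerics above, using general position, should exclude this. If it does not, I would argue more directly, using that the points are general: the unique conic-type curve in $|\hat{Q}+(m+2)\hat{F}|$ through the $m+4$ points, plus one further general point, is irreducible.
\item Once there is no fixed part, $(\tilde{D})^2=0$ forces $|\tilde{D}|$ to be base-point free.
\item Adjunction with $(\tilde{D}\cdot K_{\tilde{S}})=-2$ shows that a general member is a smooth rational curve.
\end{itemize}
Hence $\Phi_{|\tilde{D}|}:\tilde{S}\to\bP^1$ is a $\bP^1$-fibration. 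Since $(\tilde{D}\cdot\tilde{Q})=2$, the restriction $\Phi_{|\tilde{D}|}|_{\tilde{Q}}:\tilde{Q}\cong\bP^1\to\bP^1$ is a double cover. By Hurwitz it has exactly two ramification points. Pick one of them as $\tilde{\sf q}$, and let $\tilde{\Gamma}$ be the fiber through it. Then $\tilde{\Gamma}$ meets $\tilde{Q}$ only at $\tilde{\sf q}$, and the local intersection number there is $2$.

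\textbf{Irreducibility of the special fiber (main obstacle).} The hardest step is to show that this fiber $\tilde{\Gamma}$ is irreducible, so that it is a $0$-curve in the class $\tilde{D}$.
\begin{itemize}
\item Since $\tilde{Q}$ is horizontal, every fiber component is either a $0$-curve or a negative curve other than $\tilde{Q}$, hence a $(-1)$-curve. So a reducible fiber consists only of $(-1)$-curves.
\item A fiber of a $\bP^1$-fibration made of $(-1)$-curves alone must be a degenerate conic $L+L'$, meeting in one point with multiplicity one each.
\item By Lemma \ref{lem: (-1)-curve (2)} (1), $(L\cdot\tilde{Q})\le 1$ and $(L'\cdot\tilde{Q})\le 1$. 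Since the total is $2$, both equal $1$.
\item Tangency at $\tilde{\sf q}$ would then force both $L$ and $L'$ to pass through $\tilde{\sf q}\in\tilde{Q}$. So two $(-1)$-curves would intersect on $\tilde{Q}$, which contradicts the general position assumption in Definition \ref{def:surface}.
\end{itemize}
Therefore $\tilde{\Gamma}_{\tilde{\sf q}}:=\tilde{\Gamma}$ is an irreducible $0$-curve, $\tilde{\Gamma}_{\tilde{\sf q}}\sim\tilde{D}$, and $I_{\tilde{\sf q}}(\tilde{\Gamma}_{\tilde{\sf q}},\tilde{Q})=2$, as claimed.
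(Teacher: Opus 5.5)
Your route differs from the paper's in how $\tilde{\sf q}$ is produced. The paper never shows that $|\tilde D|$ is base-point free. It only shows that $\tilde Q$ is not a fixed component, using $|(m+2)\hat F-\sum_i\hat{\sf q}_i|=\emptyset$. Restricting to $\tilde Q$ then gives a positive-dimensional degree-$2$ linear system on $\tilde Q\cong\bP^1$, which necessarily contains a divisor $2\tilde{\sf q}$. Your final irreducibility argument is the same as the paper's. Your fibration structure is exactly what justifies the paper's unexplained sentence that a reducible member ``is the sum of two $(-1)$-curves''. So your route, once complete, is the more self-contained of the two.

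The step your argument rests on is not actually proved, however. You only say that a short check ``should'' exclude a fixed part. Your fallback (irreducibility of the member through one extra general point) just restates the claim. Even your assertion that fixed components must be negative curves needs an argument. Here is one way to close the gap.
\begin{itemize}
\item \emph{$\tilde D$ is nef.} Note $\tilde D=-K_{\tilde S}-\tilde Q$. Then $(\tilde D\cdot\tilde Q)=2$, and $(\tilde D\cdot L)=1-(L\cdot\tilde Q)\ge 0$ for every $(-1)$-curve $L$ by Lemma~\ref{lem: (-1)-curve (2)}~(1). Also $(\tilde D\cdot C)\ge 0$ whenever $(C)^2\ge 0$, because $\tilde D$ is effective. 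These cases exhaust the irreducible curves on $\tilde S$.
\item \emph{Decomposition.} Write $\tilde D\sim M+Z$ with $M$ the nonzero, nef mobile part and $Z$ the fixed part. From $0=(\tilde D)^2=(\tilde D\cdot M)+(\tilde D\cdot Z)$ and nefness of $\tilde D$ and $M$, you get $(M)^2=(M\cdot Z)=(Z)^2=0$. The Hodge index theorem then gives $Z\equiv cM$ with $c\ge 0$.
\item \emph{No fixed part.} Since $|M|$ has no fixed part and $(M)^2=0$, it is base-point free, so $M\sim kF'$ for a fiber $F'$ of the Stein factorization of $\Phi_{|M|}$. Now $2=-(K_{\tilde S}\cdot\tilde D)=(1+c)k\,(2-2g(F'))$ forces $g(F')=0$, $k=1$ and $c=0$.
\end{itemize}
Hence $Z=0$ and $|\tilde D|$ is a $\bP^1$-fibration. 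With this inserted, the rest of your argument (Hurwitz, degenerate-conic fibers, general position) goes through as written.
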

\begin{proof}
We consider the divisor $\tilde{\Delta} := \tilde{Q} + (m+2)\tilde{F} - \sum_{i=1}^{m+4}\tilde{E}_i$ on $\tilde{S}$. 
By the Riemann-Roch theorem combined with rationality of $\tilde{S}$, we have: 
\begin{align*}
\dim |\tilde{\Delta}| \ge h^0(\tilde{S},\mathscr{O}_{\tilde{S}}(\tilde{\Delta})) -1 = \frac{1}{2}(\tilde{\Delta} \cdot \tilde{\Delta}-K_{\tilde{S}}) = 1. 
\end{align*}
Recall that $\psi:\tilde{S} \to \F_m$ is a birational morphism obtained by the contraction of $\tilde{E}_1,\dots,\tilde{E}_{m+4}$. 
Set $\hat{Q} := \psi_{\ast}(\tilde{Q})$, $\hat{F} := \psi_{\ast}(\tilde{F})$, and $\hat{\sf q}_i := \psi(\tilde{E}_i)$ for $i=1,\dots,m+4$. Since 
\begin{align*}
h^0(\F_m,\mathscr{O}_{\F_m}((m+2)\hat{F})) = m+3
\end{align*}
and $\hat{\sf q}_1,\dots,\hat{\sf q}_{m+4}$ are in general position, we know: 
\begin{align*}
\left|(m+2)\hat{F} - (\hat{\sf q}_1 + \dots + \hat{\sf q}_{m+4})\right|
 = \emptyset. 
\end{align*}
This implies that $\tilde{Q}$ is not a fixed component of $|\tilde{\Delta}|$. 
Since $(\tilde{\Delta} \cdot \tilde{Q}) = 2$, the restriction of $|\tilde{\Delta}|$ to $\tilde{Q}$ defines a linear system of degree $2$ on $\tilde{Q} \cong \bP^1$. 
Hence, there exist a closed point $\tilde{\sf q} \in \tilde{Q}$ and a member $\tilde{\Gamma}_{\tilde{\sf q}} \in |\tilde{\Delta}|$ such 
that $I_{\tilde{\sf q}} (\tilde{\Gamma}_{\tilde{\sf q}},\tilde{Q})=2$. 

Suppose that $\tilde{\Gamma}_{\tilde{\sf q}}$ is reducible. 
Then it is the sum of two $(-1)$-curves. 
Note that every $(-1)$-curve on $\tilde{S}$ intersects $\tilde{Q}$ at most once by Lemma \ref{lem: (-1)-curve (2)} (1). 
Hence, both $(-1)$-curves must intersect $\tilde{Q}$ at $\tilde{\sf q}$, since $(\tilde{\Gamma}_{\tilde{\sf q}}\cdot\tilde{Q})=2$. 
This is a contradiction to the general position assumption (see Definition \ref{def:surface}). 
Therefore, $\tilde{\Gamma}_{\tilde{\sf q}}$ is irreducible. 
In particular, we know that $\tilde{\Gamma}_{\tilde{\sf q}}$ is a $0$-curve. 
\end{proof}
Let $\tilde{\sf q}$ and $\tilde{\Gamma}_{\tilde{\sf q}}$ be the same as in Lemma \ref{curve}. 
Take a fiber $\tilde{F}_{\tilde{\sf q}}$ of $\varphi$ passing through $\tilde{\sf q}$. 
The configuration looks like that in Figure \ref{fig(1)}. 
Set $\Gamma_{\tilde{\sf q}} := \pi_{\ast}(\tilde{\Gamma}_{\tilde{\sf q}})$, $F_{\tilde{\sf q}} := \pi_{\ast}(\tilde{F}_{\tilde{\sf q}})$, $E_i := \pi_{\ast}(\tilde{E}_i)$ and $E_i' := \pi_{\ast}(\tilde{E}_i')$ for $i=1,\dots,m+4$. 
\begin{figure}[t]
\begin{center}
\begin{tikzpicture}[scale=0.7]
\draw[very thick] (0,0) -- (15,0);
\draw[thick] (2.5,-1) -- (2.5,6);
\node at (-.4,5.5) {$\tilde{\Gamma}_{\tilde{\sf q}}$};
\node at (-.4,0) {$\tilde{Q}$};
\node at (2.5,6.25) {$\tilde{F}_{\tilde{\sf q}}$};
\node at (2.75,-.35) {${\tilde{\sf q}}$};
\fill (2.5,0) circle (2pt);

\draw [thick] (0,5.5) 
.. controls (1,5.5) and (1,.1) .. (2.5,0)
.. controls (4,.1) and (4,5.5) .. (5,5.5)
.. controls (5,5.5) and (12,5.5) .. (15,5.5);

\draw (7.5,-1) -- (6.5,3);
\draw (7.5,6) -- (6.5,2);
\draw (9,-1) -- (8,3);
\draw (9,6) -- (8,2);
\draw (14,-1) -- (13,3);
\draw (14,6) -- (13,2);
\node at (10.75,2.5) {\large $\cdots \cdots \cdots \cdots$};
\node at (7.5,-1.45) {$\tilde{E}_1'$};
\node at (9,-1.45) {$\tilde{E}_2'$};
\node at (14,-1.45) {$\tilde{E}_{m+4}'$};
\node at (7.5,6.35) {$\tilde{E}_1$};
\node at (9,6.35) {$\tilde{E}_2$};
\node at (14,6.35) {$\tilde{E}_{m+4}$};
\end{tikzpicture}
\caption{Configuration of the $\bP^1$-fibration $\varphi :\tilde{S} \to \bP^1$}\label{fig(1)}
\end{center}
\end{figure}
\begin{lem}\label{lem(1)}
With the same notations as above, we obtain: 
\begin{align*}
S \setminus \Supp \left( \Gamma_{\tilde{\sf q}} + F_{\tilde{\sf q}} + \sum_{i=1}^tE_i + \sum_{j=t+1}^{m+4}E_j' \right) \cong \A^1 \times \A^1_{\ast}
\end{align*}
for every $t=2,\dots,m+4$, where we consider $\sum_{j=t+1}^{m+4}E_j' = 0$ if $t=m+4$. 
\end{lem}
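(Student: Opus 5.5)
The plan is to pass to the minimal resolution $\pi:\tilde{S}\to S$ and identify the complement there. The point ${\sf p}$ lies on $F_{\tilde{\sf q}}$, since $\tilde{F}_{\tilde{\sf q}}$ meets $\tilde{Q}$ at $\tilde{\sf q}$. Hence ${\sf p}$ lies in the support of the divisor, and the complement in $S$ is isomorphic to
\[
\tilde{U}:=\tilde{S}\setminus \Supp\Bigl(\tilde{Q}+\tilde{\Gamma}_{\tilde{\sf q}}+\tilde{F}_{\tilde{\sf q}}+\sum_{i=1}^t\tilde{E}_i+\sum_{j=t+1}^{m+4}\tilde{E}_j'\Bigr).
\]
The boundary curves $\tilde{E}_1,\dots,\tilde{E}_t,\tilde{E}_{t+1}',\dots,\tilde{E}_{m+4}'$ are pairwise disjoint $(-1)$-curves, and each is a component of a different singular fiber of $\varphi$. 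Contracting all of them gives a birational morphism $\theta:\tilde{S}\to \bar{S}$. Because only boundary curves are contracted, $\theta$ restricts to an isomorphism from $\tilde{U}$ onto $\bar{S}\setminus(\bar{Q}\cup\bar{\Gamma}\cup\bar{F})$, where $\bar{Q},\bar{\Gamma},\bar{F}$ denote the images of $\tilde{Q},\tilde{\Gamma}_{\tilde{\sf q}},\tilde{F}_{\tilde{\sf q}}$.

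Next I would analyse $\bar{S}$. After $\theta$, every fiber of $\varphi$ becomes irreducible, so $\bar{S}$ is a $\bP^1$-bundle over $\bP^1$, i.e.\ a Hirzebruch surface. The image $\bar{Q}$ is still a section, now with self-intersection $-m+(m+4-t)=4-t$.

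I would then check that $\bar{\Gamma}$ is also a section and meets $\bar{Q}$ only at $\theta(\tilde{\sf q})\in\bar{F}$.
\begin{itemize}
\item From $\tilde{\Gamma}_{\tilde{\sf q}}\sim\tilde{Q}+(m+2)\tilde{F}-\sum\tilde{E}_i$ one gets $(\tilde{\Gamma}_{\tilde{\sf q}}\cdot\tilde{F})=1$, so $\tilde{\Gamma}_{\tilde{\sf q}}$, and hence $\bar{\Gamma}$, is a section.
\item It also gives $(\tilde{\Gamma}_{\tilde{\sf q}}\cdot\tilde{E}_i)=1$ and $(\tilde{\Gamma}_{\tilde{\sf q}}\cdot\tilde{E}_j')=(\tilde{\Gamma}_{\tilde{\sf q}}\cdot(\tilde{F}-\tilde{E}_j))=0$. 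Moreover $\tilde{E}_i\cap\tilde{Q}=\emptyset$.
\item So contracting $\tilde{E}_i$ creates no new intersection of $\bar{\Gamma}$ with $\bar{Q}$. Contracting $\tilde{E}_j'$ does not either, since $\tilde{E}_j'$ is disjoint from $\tilde{\Gamma}_{\tilde{\sf q}}$. Therefore $\bar{\Gamma}\cap\bar{Q}=\theta(\tilde{\sf q})$, by Lemma~\ref{curve}.
\end{itemize}

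Now remove $\bar{Q}\cup\bar{F}$, a section plus a fiber. The ruling restricts to an $\A^1$-bundle over $\bP^1\setminus\{\text{pt}\}\cong\A^1$. Its transition functions are affine, so it is trivial, i.e.\ $\bar{S}\setminus(\bar{Q}\cup\bar{F})\cong\A^1_x\times\A^1_y$ with the projection to $x$ being the ruling. In these coordinates, $\bar{\Gamma}\setminus\bar{F}$ is a section disjoint from $\bar{Q}$, so it is the graph $\{y=f(x)\}$ of a regular function $f\in k[x]$. The automorphism $(x,y)\mapsto(x,y-f(x))$ sends this graph to $\{y=0\}$. Hence $\tilde{U}\cong\A^1\times\A^1_{\ast}$, which gives the claim for every $t$ in the stated range.

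The main obstacle I expect is the bookkeeping in the third step. One has to verify the intersection numbers of $\tilde{\Gamma}_{\tilde{\sf q}}$ with the $\tilde{E}_i$ and $\tilde{E}_j'$ carefully, so that $\bar{\Gamma}$ stays smooth and meets $\bar{Q}$ only over $\bar{F}$. One also has to confirm that no point of $\tilde{U}$ lies on a contracted curve. I do not see where $t\ge 2$ enters this argument. I suspect it is needed only later, so that the boundary supports a divisor of the required class; I would double-check that the argument above really is insensitive to $t$.
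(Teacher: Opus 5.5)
Your proof is correct, but it finishes differently from the paper.

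\textbf{The paper's route.} It first blows up $\tilde{\sf q}$ and its infinitely near point in the direction of $\tilde{Q}$, so that $\tilde{F}_{\tilde{\sf q}}$ becomes a $(-1)$-curve attached to the $(-2)$-curve $\bar{L}_1$. It then contracts $\bar{F}+\bar{L}_1$ together with $\tilde{E}_1,\dots,\tilde{E}_t,\tilde{E}_{t+1}',\dots,\tilde{E}_{m+4}'$ to reach $\F_{t-2}$. There the images of $\tilde{Q}$ and $\tilde{\Gamma}_{\tilde{\sf q}}$ are disjoint sections and the last exceptional curve $\bar{L}_2$ becomes a fiber. It concludes by quoting Kojima's description of minimal normal compactifications of $\A^1\times\A^1_{\ast}$.

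\textbf{Your route.} You contract only the $m+4$ disjoint $(-1)$-curves and land on $\F_{|4-t|}$. There $\bar{Q}$ and $\bar{\Gamma}$ are two sections tangent at the single point $\theta(\tilde{\sf q})\in\bar{F}$. You then straighten $\bar{\Gamma}$ by a shear inside $\bar{S}\setminus(\bar{Q}\cup\bar{F})\cong\A^2$. The shear does algebraically what the paper's two elementary transformations do geometrically: it separates the two tangent sections.

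\textbf{Comparison.} Your version is self-contained: no auxiliary blow-ups and no external classification. The same shear argument would also handle Lemma \ref{lem(2)}, where $\bar{\Gamma}$ and $\bar{C}$ become two disjoint graphs, hence differ by a nonzero constant. It also confirms your suspicion that $t\ge 2$ plays no role in the lemma. In the paper that hypothesis only keeps the index of $\F_{t-2}$ nonnegative; for $t\in\{0,1\}$ the same construction would land on $\F_{2-t}$. The paper's version, in exchange, exhibits an explicit SNC compactification of the cylinder.

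\textbf{Two small justifications to tighten.}
\begin{enumerate}
\item The triviality of the $\A^1$-bundle over $\A^1$ comes from $\operatorname{Pic}(\A^1)=0$ and $H^1(\A^1,\mathcal{O})=0$, not from the transition functions being affine.
\item The identification $\tilde{U}\cong\bar{S}\setminus(\bar{Q}\cup\bar{\Gamma}\cup\bar{F})$ needs each contracted curve to map to a point of $\bar{\Gamma}\cup\bar{Q}$. This holds because $(\tilde{\Gamma}_{\tilde{\sf q}}\cdot\tilde{E}_i)=1$ and $(\tilde{Q}\cdot\tilde{E}_j')=1$.
\end{enumerate}
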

\begin{proof}
Let $f:\bar{S} \to \tilde{S}$ be blow-ups at $\tilde{\sf q}$ and first infinitely near point of $\tilde{\sf q}$, and let $\bar{L}_1+\bar{L}_2$ be the reduced exceptional divisor of $f$, where $\bar{L}_1$ and $\bar{L}_2$ are a $(-2)$-curve and a $(-1)$-curve on $\bar{S}$, respectively. 
Set $\bar{\Gamma} := f^{-1}_{\ast}(\tilde{\Gamma}_{\tilde{\sf q}})$, $\bar{F} := f^{-1}_{\ast}(\tilde{F}_{\tilde{\sf q}})$, $\bar{Q} := f^{-1}_{\ast}(\tilde{Q})$, $\bar{E}_i := f^{-1}_{\ast}(\tilde{E}_i)$ for $i=1,\dots ,t$, and $\bar{E}_j' := f^{-1}_{\ast}(\tilde{E}_j')$ for $j=t+1,\dots,m+4$. 
Since $(\bar{F} \cdot \bar{L}_1) =1$ and $\bar{F}$ is a $(-1)$-curve on $\bar{S}$, we know that $\bar{F}+\bar{L}_1$ can be smoothly contracted. 
Moreover, the union $\sum_{i=1}^t\bar{E}_i + \sum_{j=t+1}^{m+4}\bar{E}_j'$ is disjoint and contractible. 
By contracting this disjoint union and $\bar{F}+\bar{L}_1$, we obtain the birational morphism $g:\bar{S} \to \F_{t-2}$. 
Then $g_{\ast}(\bar{Q})$, $g_{\ast}(\bar{L}_2)$, and $g_{\ast}(\bar{\Gamma})$ are a $-(t-2)$-curve, a $0$-curve, and a $(t-2)$-curve, respectively. 
See also Figure \ref{fig(2)}. 
Furthermore, since the pair $(\F_{t-2}, g_{\ast}(\bar{Q})+g_{\ast}(\bar{L}_2)+g_{\ast}(\bar{\Gamma}))$ is a minimal normal compactification of $\A^1 \times \A^1_{\ast}$ (see also {\cite{Koj02}}), we obtain $\F_{t-2} \setminus \Supp \left( g_{\ast}(\bar{Q})+g_{\ast}(\bar{L}_2)+g_{\ast}(\bar{\Gamma}) \right) \cong \A^1 \times \A^1_{\ast}$. 
Hence, we have: 

\begin{eqnarray*}
\lefteqn{S \setminus \Supp \left( \Gamma_{\tilde{\sf q}} + F_{\tilde{\sf q}} + \sum_{i=1}^tE_i + \sum_{j=t+1}^{m+4}E_j' \right)}\\
& &\cong \tilde{S} \setminus \Supp \left( \tilde{Q} + \tilde{\Gamma}_{\tilde{\sf q}} + \tilde{F}_{\tilde{\sf q}} + \sum_{i=1}^t\tilde{E}_i + \sum_{j=t+1}^{m+4}\tilde{E}_j' \right) \\
& &\cong \bar{S} \setminus \Supp \left( \bar{Q} + \bar{L}_1 + \bar{L}_2 + \bar{\Gamma} + \bar{F} + \sum_{i=1}^t\bar{E}_i + \sum_{j=t+1}^{m+4}\bar{E}_j' \right) \\
& &\cong \F_{t-2} \setminus \Supp \left( g_{\ast}(\bar{Q})+g_{\ast}(\bar{L}_2)+g_{\ast}(\bar{\Gamma}) \right) \\
& &\cong \A^1 \times \A^1_{\ast}. 
\end{eqnarray*}

This completes the proof. 
\end{proof}
\begin{figure}[t]
\begin{center}
\begin{tikzpicture}[scale=0.35]
\draw (0,0) -- (13,0);
\draw (2.5,-1) -- (2.5,6);
\node at (.35,6.35) {\footnotesize $\tilde{\Gamma}_{\tilde{\sf q}}$};
\node at (.35,-1) {\footnotesize $\tilde{Q}$};
\node at (2.5,6.35) {\footnotesize $\tilde{F}_{\tilde{\sf q}}$};
\draw (0,5.5) 
.. controls (1,5.5) and (1,.1) .. (2.5,0)
.. controls (4,.1) and (4,5.5) .. (5,5.5)
.. controls (5,5.5) and (12,5.5) .. (13,5.5);
\draw (6,6) -- (5.5,3.5);
\node at (6.75,4) {\footnotesize $\cdots$};
\draw (8.5,6) -- (8,3.5);
\draw (10,-1) -- (9.5,1.5);
\node at (10.75,1) {\footnotesize $\cdots$};
\draw (12.5,-1) -- (12,1.5);
\node at (6,6.35) {\footnotesize $\tilde{E}_1$};
\node at (8.5,6.35) {\footnotesize $\tilde{E}_t$};
\node at (10,-1.85) {\footnotesize $\tilde{E}_{t+1}'$};
\node at (12.5,-1.85) {\footnotesize $\tilde{E}_{m+4}'$};

\node at (14.5,3) {$\overset{f}{\longleftarrow}$};

\draw (16,0) -- (29,0);
\draw (16,5) -- (29,5);
\draw (16,2.5) -- (20,2.5);
\draw (17,-1) -- (17,6);
\draw (18.5,1.5) -- (18.5,3.5);
\draw (22,3.5) -- (22,6);
\node at (23.25,4) {\footnotesize $\cdots$};
\draw (24.5,3.5) -- (24.5,6);
\draw (25.5,1.5) -- (25.5,-1);
\node at (26.75,1) {\footnotesize $\cdots$};
\draw (28,1.5) -- (28,-1);
\node at (20.75,-1) {\footnotesize $\bar{Q}$};
\node at (19.5,5.85) {\footnotesize $\bar{\Gamma}$};
\node at (20.45,2.5) {\footnotesize $\bar{L}_1$};
\node at (17,6.35) {\footnotesize $\bar{L}_2$};
\node at (18.5,4.15) {\footnotesize $\bar{F}$};
\node at (22,6.35) {\footnotesize $\bar{E}_1$};
\node at (24.5,6.35) {\footnotesize $\bar{E}_t$};
\node at (25.5,-1.85) {\footnotesize $\bar{E}_{t+1}'$};
\node at (28,-1.85) {\footnotesize $\bar{E}_{m+4}'$};

\node at (30.5,3) {$\overset{g}{\longrightarrow}$};

\draw (33,0) -- (41,0);
\draw (33,5) -- (41,5);
\draw (34.5,-1) -- (34.5,6);
\node at (38.5,5.85) {\footnotesize $g_{\ast}(\bar{\Gamma})$};
\node at (38.5,-1) {\footnotesize $g_{\ast}(\bar{Q})$};
\node at (34.5,6.35) {\footnotesize $g_{\ast}(\bar{L}_2)$};

\end{tikzpicture}
\caption{Birational map $g \circ f^{-1}:\tilde{S} \dashrightarrow \F_{t-2}$}\label{fig(2)}
\end{center}
\end{figure}
\begin{lem}\label{curve(2)}
With the same notations as above, there exists a $0$-curve $\tilde{C}_{\tilde{\sf q}}$ on $\tilde{S}$ passing through $\tilde{\sf q}$ such that: 
\begin{align*}
\tilde{C}_{\tilde{\sf q}} \sim \tilde{Q} + (m+1)\tilde{F} - \sum_{i=3}^{m+4}\tilde{E}_i. 
\end{align*}
\end{lem}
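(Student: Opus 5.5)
The plan is to imitate the proof of Lemma \ref{curve}: produce a pencil in the class $\tilde{\Delta}' := \tilde{Q} + (m+1)\tilde{F} - \sum_{i=3}^{m+4}\tilde{E}_i$, take the member through $\tilde{\sf q}$, and show it is irreducible.

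\textbf{Numerics.} Using $K_{\tilde{S}} \sim -2\tilde{Q}-(m+2)\tilde{F}+\sum_{i=1}^{m+4}\tilde{E}_i$, one checks the following values:
\begin{align*}
(\tilde{\Delta}')^2 = -m+2(m+1)-(m+2) = 0, \qquad (\tilde{\Delta}'\cdot K_{\tilde{S}}) = -2,
\end{align*}
\begin{align*}
(\tilde{\Delta}'\cdot\tilde{Q})=1, \qquad (\tilde{\Delta}'\cdot\tilde{F})=1.
\end{align*}
Riemann--Roch on the rational surface $\tilde{S}$ then gives $\dim|\tilde{\Delta}'|\ge 1$.

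\textbf{$\tilde{Q}$ is not a fixed component.} By the same argument as in Lemma \ref{curve}, $|\tilde{\Delta}'-\tilde{Q}| = |(m+1)\tilde{F}-\sum_{i=3}^{m+4}\tilde{E}_i|$ is empty. Indeed, its image on $\F_m$ would be $m+1$ fibres containing the $m+2$ points $\hat{\sf q}_3,\dots,\hat{\sf q}_{m+4}$, and these points lie on distinct fibres. Since $\tilde{Q}$ is not fixed, the base locus of $|\tilde{\Delta}'|$ is finite, and because $(\tilde{\Delta}')^2=0$ the pencil is base point free. Hence $|\tilde{\Delta}'|$ defines a $\bP^1$-fibration on which $\tilde{Q}$ is a section, since $(\tilde{\Delta}'\cdot\tilde{Q})=1$. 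Imposing passage through $\tilde{\sf q}$ singles out a member $\tilde{C}_{\tilde{\sf q}}$, namely the fibre through $\tilde{\sf q}$.

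\textbf{Irreducibility (main obstacle).} Suppose $\tilde{C}_{\tilde{\sf q}}$ is reducible. Its components have negative self-intersection and are not $\tilde{Q}$, since $\tilde{Q}$ is horizontal. So they are $(-1)$-curves, and a singular fibre of class $\tilde{\Delta}'$ with $(\tilde{\Delta}'\cdot K_{\tilde{S}})=-2$ is a degenerate conic $\tilde{L}_1+\tilde{L}_2$. Exactly one component, say $\tilde{L}_1$, meets the section $\tilde{Q}$, and it must do so at $\tilde{\sf q}$. By Lemma \ref{lem: (-1)-curve (2)} (3), $\tilde{L}_1\in\{\tilde{E}_i',\tilde{E}_i''\}$ for some $i$. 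Now compute, for the $0$-curve $\tilde{\Gamma}_{\tilde{\sf q}}$ of Lemma \ref{curve}:
\begin{align*}
(\tilde{\Gamma}_{\tilde{\sf q}}\cdot\tilde{E}_i') = 1-1 = 0, \qquad (\tilde{\Gamma}_{\tilde{\sf q}}\cdot\tilde{E}_i'') = -m+(m+1)+(m+2)-(m+4)+1 = 0.
\end{align*}
But $\tilde{\Gamma}_{\tilde{\sf q}}$ is irreducible, distinct from $\tilde{L}_1$, and passes through $\tilde{\sf q}$, so $(\tilde{\Gamma}_{\tilde{\sf q}}\cdot\tilde{L}_1)\ge 1$. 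This is a contradiction. (Alternatively, one could appeal to the general position condition that no two $(-1)$-curves meet on $\tilde{Q}$.)

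Hence $\tilde{C}_{\tilde{\sf q}}$ is irreducible. It is a full fibre of a $\bP^1$-fibration, hence a smooth rational curve. Since its self-intersection is $0$, it is a $0$-curve through $\tilde{\sf q}$, which proves the lemma. The delicate point is this irreducibility step, specifically identifying the possible $(-1)$-curve component through $\tilde{\sf q}$ via Lemma \ref{lem: (-1)-curve (2)} and excluding it by the intersection numbers with $\tilde{\Gamma}_{\tilde{\sf q}}$.
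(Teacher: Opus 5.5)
Your route is the one the paper intends; its proof only refers back to Lemma \ref{curve}. Your numerics, the emptiness of $|\tilde{\Delta}'-\tilde{Q}|$, and the irreducibility step are correct.

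\textbf{Gap.} One inference is unjustified. From the fact that $\tilde{Q}$ is not a fixed component, you conclude that the base locus of $|\tilde{\Delta}'|$ is finite. That says nothing about other possible fixed components. Everything you do afterwards needs the pencil to have no fixed part: the $\bP^1$-fibration, Zariski's lemma making the components of a reducible member negative curves and hence $(-1)$-curves, and the degenerate-conic shape.

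\textbf{Repair.} The fix uses curves you already have. By Lemma \ref{lem: (-1)-curve (2)} (2), both $\tilde{E}_1''+\tilde{E}_2$ and $\tilde{E}_2''+\tilde{E}_1$ lie in $|\tilde{\Delta}'|$, and they have no common component. So the fixed part is zero. Since $(\tilde{\Delta}')^2=0$, these two members are then disjoint, so $|\tilde{\Delta}'|$ is base point free. In addition, $|\tilde{\Delta}'-\tilde{Q}|=\emptyset$ makes the restriction $H^0(\tilde{S},\mathscr{O}_{\tilde{S}}(\tilde{\Delta}'))\to H^0(\tilde{Q},\mathscr{O}_{\tilde{Q}}(\tilde{\Delta}'))\cong H^0(\bP^1,\mathscr{O}_{\bP^1}(1))$ injective. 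Hence $h^0(\tilde{\Delta}')=2$ and the member through $\tilde{\sf q}$ is unique.

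Once this is patched, your irreducibility step is more careful than a verbatim copy of Lemma \ref{curve}. Since $(\tilde{\Delta}'\cdot\tilde{Q})=1$, a reducible member has only one component through $\tilde{\sf q}$. So the contradiction used in Lemma \ref{curve} (two $(-1)$-curves meeting on $\tilde{Q}$) is not available here. Your computation $(\tilde{\Gamma}_{\tilde{\sf q}}\cdot\tilde{E}_i')=(\tilde{\Gamma}_{\tilde{\sf q}}\cdot\tilde{E}_i'')=0$ is exactly the input that closes the argument. For the same reason, your parenthetical alternative via the general position condition does not apply as stated.
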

\begin{proof}
This lemma can be shown by a similar argument to Lemma \ref{curve}. 
\end{proof}
Let $\tilde{C}_{\tilde{\sf q}}$ be the same as in Lemma \ref{curve(2)}. 
Put $C_{\tilde{\sf q}} := \pi_{\ast}(\tilde{C}_{\tilde{\sf q}})$. 
\begin{lem}\label{lem(2)}
With the same notations as above, we obtain: 
\begin{align*}
S \setminus \Supp \left( \Gamma_{\tilde{\sf q}} + F_{\tilde{\sf q}} + C_{\tilde{\sf q}} + E_1 + E_2 + \sum_{i=3}^{m+4}E_i' \right) \cong \A^1 \times \A^1_{\ast \ast}. 
\end{align*}
\end{lem}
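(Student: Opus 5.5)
The plan is to follow the proof of Lemma \ref{lem(1)} with $t=2$ and keep track of one extra boundary curve, $\tilde{C}_{\tilde{\sf q}}$. On $\F_0\cong\bP^1\times\bP^1$ we should end up with two disjoint sections $g_\ast(\bar{Q})$ and $g_\ast(\bar{\Gamma})$ of one ruling, a fiber $g_\ast(\bar{L}_2)$, and a third section $g_\ast(\bar{C})$ of the same ruling disjoint from the first two. The complement is then $(\bP^1\setminus\{3\ \text{points}\})\times\A^1\cong\A^1\times\A^1_{\ast\ast}$.

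First I compute intersection numbers on $\tilde{S}$ from the classes in Lemmas \ref{curve} and \ref{curve(2)}:
\begin{align*}
(\tilde{C}_{\tilde{\sf q}}\cdot\tilde{Q})=1,\quad (\tilde{C}_{\tilde{\sf q}}\cdot\tilde{F})=1,\quad (\tilde{C}_{\tilde{\sf q}}\cdot\tilde{\Gamma}_{\tilde{\sf q}})=1,\quad (\tilde{C}_{\tilde{\sf q}}\cdot\tilde{E}_1)=(\tilde{C}_{\tilde{\sf q}}\cdot\tilde{E}_2)=0,\quad (\tilde{C}_{\tilde{\sf q}}\cdot\tilde{E}_i')=0\ (i\ge 3).
\end{align*}
Since $\tilde{C}_{\tilde{\sf q}}$ passes through $\tilde{\sf q}$, it meets $\tilde{Q}$ only there, and transversally. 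Because $\tilde{\Gamma}_{\tilde{\sf q}}$ is tangent to $\tilde{Q}$ at $\tilde{\sf q}$, the curve $\tilde{C}_{\tilde{\sf q}}$ meets $\tilde{\Gamma}_{\tilde{\sf q}}$ only at $\tilde{\sf q}$, transversally, and there its tangent direction differs from that of $\tilde{Q}$ and $\tilde{\Gamma}_{\tilde{\sf q}}$. Moreover $(\tilde{C}_{\tilde{\sf q}}\cdot\tilde{F})=1$ and both curves pass through $\tilde{\sf q}$, so $\tilde{C}_{\tilde{\sf q}}$ meets $\tilde{F}_{\tilde{\sf q}}$ only at $\tilde{\sf q}$. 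The boundary divisor $\tilde{Q}+\tilde{\Gamma}_{\tilde{\sf q}}+\tilde{F}_{\tilde{\sf q}}+\tilde{C}_{\tilde{\sf q}}+\tilde{E}_1+\tilde{E}_2+\sum_{i\ge3}\tilde{E}_i'$ therefore has all its interesting incidences concentrated at $\tilde{\sf q}$.

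Next I take $f:\bar{S}\to\tilde{S}$ to be the same two blow-ups as in Lemma \ref{lem(1)}: first at $\tilde{\sf q}$, then at the point of the first exceptional curve corresponding to the common tangent direction of $\tilde{Q}$ and $\tilde{\Gamma}_{\tilde{\sf q}}$. Set $\bar{C}:=f^{-1}_\ast(\tilde{C}_{\tilde{\sf q}})$. It is a $(-1)$-curve meeting $\bar{L}_1$ once, and it is disjoint from $\bar{Q}$, $\bar{\Gamma}$, $\bar{L}_2$, the $\bar{E}_1,\bar{E}_2$, and the $\bar{E}_i'$ for $i\ge3$. Then I contract $\bar{F}+\bar{L}_1$ together with $\bar{E}_1,\bar{E}_2,\bar{E}_3',\dots,\bar{E}_{m+4}'$, which gives $g:\bar{S}\to\F_0$ exactly as in Lemma \ref{lem(1)}. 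Under $g$, the curve $\bar{C}$ does not meet $\bar{E}_1,\bar{E}_2,\bar{E}_i'$. If it also misses $\bar{F}$, then contracting $\bar{F}$ leaves its self-intersection $-1$, and contracting $\bar{L}_1$ raises it to $0$. The image is then a $0$-curve meeting $g_\ast(\bar{L}_2)$ once and missing $g_\ast(\bar{Q})$ and $g_\ast(\bar{\Gamma})$, i.e.\ a third section of the same ruling disjoint from the other two, as required.

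The main obstacle is the possibility that $\tilde{C}_{\tilde{\sf q}}$ and $\tilde{F}_{\tilde{\sf q}}$ share a tangent direction at $\tilde{\sf q}$. In that case $\bar{C}$ would meet $\bar{F}$ rather than $\bar{L}_1$, and the image of $\bar{C}$ would acquire a singular point or the wrong self-intersection. I would first try to exclude this by using the one-dimensional freedom in $|\tilde{Q}+(m+1)\tilde{F}-\sum_{i=3}^{m+4}\tilde{E}_i|$, a pencil by the Riemann--Roch count as in Lemma \ref{curve}. Only one member of the pencil is tangent to $\tilde{F}_{\tilde{\sf q}}$ at $\tilde{\sf q}$, so choosing $\tilde{C}_{\tilde{\sf q}}$ to be another irreducible member through $\tilde{\sf q}$ avoids it. Irreducibility of that member follows as in Lemma \ref{curve}: any reducible member would contain two $(-1)$-curves meeting on $\tilde{Q}$ or a fiber component, which the general position assumption forbids. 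If instead the statement fixes $\tilde{C}_{\tilde{\sf q}}$, I would blow up along the common tangent and check by the same contraction bookkeeping that the complement is unchanged. Finally, as in Lemma \ref{lem(1)}, the chain of isomorphisms through $\tilde{S}$ and $\bar{S}$ identifies the complement in $S$ with $\F_0$ minus three disjoint sections and one fiber, which is $\A^1\times\A^1_{\ast\ast}$.
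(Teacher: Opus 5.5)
Your main construction is correct, but it is not the paper's route. The paper uses the same two blow-ups $f$, then contracts $\bar{F}$ and $\bar{C}$ \emph{separately}, together with $\bar{E}_1,\bar{E}_2,\bar{E}_3',\dots,\bar{E}_{m+4}'$. Here $\bar{F}$ and $\bar{C}$ are two disjoint $(-1)$-curves, each meeting $\bar{L}_1$ once. This lands on $\F_1$, where $g_\ast(\bar{L}_2)$ is the $(-1)$-section and $g_\ast(\bar{Q})$, $g_\ast(\bar{\Gamma})$, $g_\ast(\bar{L}_1)$ are three fibers; so $\bar{L}_1$ stays in the boundary and $\bar{C}$ is contracted. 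You instead reuse the contraction of Lemma \ref{lem(1)} with $t=2$, contracting the chain $\bar{F}+\bar{L}_1$, and keep $\bar{C}$ in the boundary. On $\F_0$ it becomes a third fiber of the ruling that contains $g_\ast(\bar{Q})$ and $g_\ast(\bar{\Gamma})$. The two models differ by an elementary transformation: after contracting $\bar{F}$ and the $\bar{E}$'s, both $\bar{L}_1$ and $\bar{C}$ are $(-1)$-curves meeting once, and you contract one or the other. Your version needs no new input beyond Lemma \ref{lem(1)} except the intersection data of $\tilde{C}_{\tilde{\sf q}}$, which you computed correctly. The paper's version treats $\bar{F}$ and $\bar{C}$ symmetrically and avoids a chain contraction. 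Both rely on the same transversality facts at $\tilde{\sf q}$.

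One correction: the ``main obstacle'' you raise cannot occur, and the fix you propose would not work anyway.
\begin{itemize}
\item \textbf{Why it cannot occur.} $\tilde{C}_{\tilde{\sf q}}$ and $\tilde{F}_{\tilde{\sf q}}$ are distinct irreducible curves through $\tilde{\sf q}$ with $(\tilde{C}_{\tilde{\sf q}}\cdot\tilde{F})=1$. So their local intersection multiplicity at $\tilde{\sf q}$ is $1$ and they are transversal there. This is exactly the argument you already used for $\tilde{Q}$ and $\tilde{\Gamma}_{\tilde{\sf q}}$. Note that $\tilde{F}_{\tilde{\sf q}}$ is a smooth fiber, since $\tilde{\Gamma}_{\tilde{\sf q}}$ passes through $\tilde{\sf q}$ while $(\tilde{\Gamma}_{\tilde{\sf q}}\cdot\tilde{E}_i')=0$ for all $i$. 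Hence $\bar{C}\cap\bar{F}=\emptyset$ automatically.
\item \textbf{Why the fix fails.} There is no ``one-dimensional freedom'' to exploit. $\tilde{C}_{\tilde{\sf q}}$ is a smooth rational $0$-curve, so $|\tilde{C}_{\tilde{\sf q}}|$ is a base-point free pencil defining a $\bP^1$-fibration. Exactly one member passes through $\tilde{\sf q}$, so no other irreducible member through $\tilde{\sf q}$ exists to choose.
\end{itemize}
Replace that paragraph with the transversality observation and your proof is complete.
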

\begin{proof}
Let $f:\bar{S} \to \tilde{S}$ be blow-ups at $\tilde{\sf q}$ and first infinitely near point of $\tilde{\sf q}$, and let $\bar{L}_1+\bar{L}_2$ be the reduced exceptional divisor of $f$, where $\bar{L}_1$ and $\bar{L}_2$ are a $(-2)$-curve and a $(-1)$-curve on $\bar{S}$, respectively. 
Set $\bar{\Gamma} := f^{-1}_{\ast}(\tilde{\Gamma}_{\tilde{\sf q}})$, $\bar{F} := f^{-1}_{\ast}(\tilde{F}_{\tilde{\sf q}})$, $\bar{C} := f^{-1}_{\ast}(\tilde{C}_{\tilde{\sf q}})$, $\bar{Q} := f^{-1}_{\ast}(\tilde{Q})$, $\bar{E}_1 := f^{-1}_{\ast}(\tilde{E}_1)$, $\bar{E}_2 := f^{-1}_{\ast}(\tilde{E}_2)$, and $\bar{E}_i' := f^{-1}_{\ast}(\tilde{E}_i')$ for $i=3,\dots,m+4$. 
Then the union $\bar{F} + \bar{C} + \bar{E}_1 + \bar{E}_2 + \sum_{i=3}^{m+4}\bar{E}_i'$ is disjoint and contractible. 
By contracting this disjoint union, we obtain the birational morphism $g:\bar{S} \to \F_1$. 
Then $g_{\ast}(\bar{Q})$, $g_{\ast}(\bar{\Gamma})$ and $g_{\ast}(\bar{L}_1)$ are $0$-curves, and $g_{\ast}(\bar{L}_2)$ is a $(-1)$-curve. 
See also Figure \ref{fig(3)}. 
Furthermore, since the pair $(\F_1, g_{\ast}(\bar{Q}) + g_{\ast}(\bar{\Gamma}) + g_{\ast}(\bar{L}_1) + g_{\ast}(\bar{L}_2))$ is a minimal normal compactification of $\A^1 \times \A^1_{\ast \ast}$ (see also {\cite{Koj02}}), we obtain $\F_1 \setminus \Supp \left( g_{\ast}(\bar{Q}) + g_{\ast}(\bar{\Gamma}) + g_{\ast}(\bar{L}_1) + g_{\ast}(\bar{L}_2) \right) \cong \A^1 \times \A^1_{\ast \ast}$. 
Hence, we have: 
\begin{align*}
&S \setminus \Supp \left( \Gamma_{\tilde{\sf q}} + F_{\tilde{\sf q}} + C_{\tilde{\sf q}} + E_1 + E_2 + \sum_{i=3}^{m+4}E_i' \right) \\
&\qquad \cong \tilde{S} \setminus \Supp \left( \tilde{Q} + \tilde{\Gamma}_{\tilde{\sf q}} + \tilde{F}_{\tilde{\sf q}} + \tilde{C}_{\tilde{\sf q}} + \tilde{E}_1 + \tilde{E}_2 + \sum_{i=3}^{m+4}\tilde{E}_i' \right) \\
&\qquad \cong \bar{S} \setminus \Supp \left( \bar{Q} + \bar{\Gamma} + \bar{F} + \bar{C} + \bar{L}_1 + \bar{L}_2 + \bar{E}_1 + \bar{E}_2 + \sum_{i=3}^{m+4}\bar{E}_i' \right) \\
&\qquad \cong \F_1 \setminus \Supp \left( g_{\ast}(\bar{Q}) + g_{\ast}(\bar{\Gamma}) + g_{\ast}(\bar{L}_1) + g_{\ast}(\bar{L}_2) \right) \\
&\qquad \cong \A^1 \times \A^1_{\ast \ast}. 
\end{align*}
This completes the proof. 
\end{proof}
\begin{figure}[t]
\begin{center}
\begin{tikzpicture}[scale=0.35]
\draw (0,0) -- (13,0);
\draw (2.5,-1) -- (2.5,6);
\node at (.35,6.35) {\footnotesize $\tilde{\Gamma}_{\tilde{\sf q}}$};
\node at (.35,.85) {\footnotesize $\tilde{Q}$};
\node at (2.5,6.35) {\footnotesize $\tilde{F}_{\tilde{\sf q}}$};
\draw (0,5.5) 
.. controls (1,5.5) and (1,.1) .. (2.5,0)
.. controls (4,.1) and (4,5.5) .. (5,5.5)
.. controls (5,5.5) and (12,5.5) .. (13,5.5);
\draw (6,6) -- (5.5,3.5);
\draw (7.5,6) -- (7,3.5);
\draw (9,-1) -- (8.5,1.5);
\node at (10.25,1) {\footnotesize $\cdots$};
\draw (12.5,-1) -- (12,1.5);
\draw (0,-1) -- (13,4.2);
\node at (10.5,4.2) {\footnotesize $\tilde{C}_{\tilde{\sf q}}$};
\node at (6,6.35) {\footnotesize $\tilde{E}_1$};
\node at (7.5,6.35) {\footnotesize $\tilde{E}_2$};
\node at (9,-1.85) {\footnotesize $\tilde{E}_3'$};
\node at (12.5,-1.85) {\footnotesize $\tilde{E}_{m+4}'$};

\node at (14.5,3) {$\overset{f}{\longleftarrow}$};

\draw (16,0) -- (29,0);
\draw (16,5) -- (29,5);
\draw (16,2.5) -- (21.5,2.5);
\draw (17,-1) -- (17,6);
\draw (18.5,1.5) -- (18.5,3.5);
\draw (20,1.5) -- (20,3.5);
\draw (23,3.5) -- (23,6);
\draw (24.5,3.5) -- (24.5,6);
\draw (25.5,1.5) -- (25.5,-1);
\node at (26.75,1) {\footnotesize $\cdots$};
\draw (28,1.5) -- (28,-1);
\node at (23,.85) {\footnotesize $\bar{Q}$};
\node at (21,5.85) {\footnotesize $\bar{\Gamma}$};
\node at (21.95,2.5) {\footnotesize $\bar{L}_1$};
\node at (17,6.35) {\footnotesize $\bar{L}_2$};
\node at (18.5,4.15) {\footnotesize $\bar{F}$};
\node at (20,4.15) {\footnotesize $\bar{C}$};
\node at (23,6.35) {\footnotesize $\bar{E}_1$};
\node at (24.5,6.35) {\footnotesize $\bar{E}_2$};
\node at (25.5,-1.85) {\footnotesize $\bar{E}_3'$};
\node at (28,-1.85) {\footnotesize $\bar{E}_{m+4}'$};

\node at (30.5,3) {$\overset{g}{\longrightarrow}$};

\draw (33,0) -- (41,0);
\draw (33,2.5) -- (41,2.5);
\draw (33,5) -- (41,5);
\draw (34.5,-1) -- (34.5,6);
\node at (38.5,5.85) {\footnotesize $g_{\ast}(\bar{\Gamma})$};
\node at (38.5,3.35) {\footnotesize $g_{\ast}(\bar{L}_1)$};
\node at (38.5,.85) {\footnotesize $g_{\ast}(\bar{Q})$};
\node at (34.5,6.35) {\footnotesize $g_{\ast}(\bar{L}_2)$};
\end{tikzpicture}
\caption{Birational map $g \circ f^{-1}:\tilde{S} \dashrightarrow \F_1$}\label{fig(3)}
\end{center}
\end{figure}
We consider a $(-1)$-curve $\tilde{C}$ on $\tilde{S}$ such that: 
\begin{align*}
\tilde{C} \sim \tilde{Q} + (m+1)\tilde{F} - \sum_{i=2}^{m+4}\tilde{E}_i. 
\end{align*}
Note that such a $(-1)$-curve $\tilde{C}$ exists by Lemma \ref{lem: (-1)-curve (2)} (2). 
Put $C := \pi_{\ast}(\tilde{C})$. 
\begin{lem}\label{lem(3)}
With the same notations as above, we obtain: 
\begin{align*}
S \setminus \Supp \left( \Gamma_{\tilde{\sf q}} + C + E_1 + \sum_{i=2}^{m+4}E_i' \right) \cong \A^1 \times \A^1_{\ast}. 
\end{align*}
\end{lem}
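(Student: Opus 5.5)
The plan is to follow the proofs of Lemmas \ref{lem(1)} and \ref{lem(2)}. We blow up twice over $\tilde{\sf q}$ to separate $\tilde{Q}$ and $\tilde{\Gamma}_{\tilde{\sf q}}$. Then we contract boundary $(-1)$-curves until we reach a Hirzebruch surface whose boundary is a standard compactification of $\A^1\times\A^1_{\ast}$. Here the fiber $\tilde F_{\tilde{\sf q}}$ is \emph{not} part of the boundary.

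\textbf{Step 1: intersection numbers on $\tilde S$.} From $\tilde{\Gamma}_{\tilde{\sf q}}\sim \tilde Q+(m+2)\tilde F-\sum_{i=1}^{m+4}\tilde E_i$ and $\tilde C\sim \tilde Q+(m+1)\tilde F-\sum_{i=2}^{m+4}\tilde E_i$ we get the following.
\begin{itemize}
\item $(\tilde{\Gamma}_{\tilde{\sf q}}\cdot\tilde C)=-m+(m+1)+(m+2)-(m+3)=0$, so these curves are disjoint. Hence $\tilde C$ does not pass through $\tilde{\sf q}$.
\item $(\tilde C\cdot \tilde Q)=1$, $(\tilde C\cdot\tilde E_1)=0$, and $(\tilde C\cdot \tilde E_i')=0$ for $i\ge 2$.
\item $(\tilde\Gamma_{\tilde{\sf q}}\cdot\tilde E_1)=1$ and $(\tilde\Gamma_{\tilde{\sf q}}\cdot\tilde E_i')=0$ for $i\ge2$.
\item $(\tilde Q\cdot\tilde E_1)=0$ and $(\tilde Q\cdot\tilde E_i')=1$.
\end{itemize}
Since $\pi$ contracts $\tilde Q$, the complement in question is isomorphic to $\tilde S\setminus\Supp(\tilde Q+\tilde\Gamma_{\tilde{\sf q}}+\tilde C+\tilde E_1+\sum_{i\ge2}\tilde E_i')$.

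\textbf{Step 2: blow up over $\tilde{\sf q}$.} As in Lemma \ref{lem(1)}, let $f:\bar S\to\tilde S$ be the blow-up at $\tilde{\sf q}$ and at the infinitely near point determined by the common tangent of $\tilde Q$ and $\tilde\Gamma_{\tilde{\sf q}}$. The exceptional curves are $\bar L_1$ (a $(-2)$-curve) and $\bar L_2$ (a $(-1)$-curve).
\begin{itemize}
\item We have $(\bar Q)^2=-m-2$ and $(\bar\Gamma)^2=-2$.
\item $\bar L_2$ meets each of $\bar Q$, $\bar\Gamma$, $\bar L_1$ once, and $\bar Q$, $\bar\Gamma$, $\bar L_1$ are pairwise disjoint.
\item The other curves keep their intersection data.
\end{itemize}
The complement is unchanged after adding $\bar L_1+\bar L_2$ to the boundary.

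\textbf{Step 3: contract boundary curves.} Contract the disjoint $(-1)$-curves $\bar E_1,\bar E_2',\dots,\bar E_{m+4}',\bar C$. Then $\bar\Gamma$ becomes a $(-1)$-curve, and $\bar Q$ becomes a $(-m-2+(m+3)+1)=2$-curve. Next contract the image of $\bar\Gamma$, which meets only the image of $\bar L_2$. Then $\bar L_2$ becomes a $0$-curve, while $\bar Q$ and $\bar L_1$ are unaffected.

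We have contracted $m+6$ curves from a surface of Picard rank $m+8$, so the result $g:\bar S\to Y$ has $\rho(Y)=2$. On $Y$ there are a $(-2)$-curve $g_\ast\bar L_1$, a $0$-curve $g_\ast\bar L_2$ meeting both others once, and a $2$-curve $g_\ast\bar Q$ disjoint from $g_\ast\bar L_1$. Hence $Y\cong\F_2$ with the following identifications:
\begin{itemize}
\item $g_\ast\bar L_1$ is the minimal section;
\item $g_\ast\bar L_2$ is a fiber;
\item $g_\ast\bar Q\sim g_\ast\bar L_1+2g_\ast\bar L_2$ is a section disjoint from the minimal section.
\end{itemize}
The complement of two disjoint sections in $\F_2$ is a $\mathbb G_m$-bundle over $\bP^1$. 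Removing a fiber as well leaves $\A^1\times\A^1_{\ast}$. This is again a minimal normal compactification in the sense of \cite{Koj02}, as in Lemma \ref{lem(1)}. Chaining the isomorphisms as in the previous lemmas gives the claim.

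\textbf{Main obstacle.} The delicate point is the configuration bookkeeping after $f$, in particular verifying $(\bar Q\cdot\bar L_1)=(\bar\Gamma\cdot\bar L_1)=0$ and that the contracted curves are disjoint when contracted. This uses the tangency $I_{\tilde{\sf q}}(\tilde\Gamma_{\tilde{\sf q}},\tilde Q)=2$ from Lemma \ref{curve} and the fact $\tilde{\sf q}\notin\tilde C$. One must also note that $\bar F$ (not in the boundary) plays no role.
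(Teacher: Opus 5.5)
Your proof is correct; it reaches the result by a slightly different route from the paper.

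The paper does no preliminary blow-ups. It contracts the same disjoint $(-1)$-curves $\tilde C,\tilde E_1,\tilde E_2',\dots,\tilde E_{m+4}'$ directly on $\tilde S$. This gives $f:\tilde S\to\bP^2$, where $f_*\tilde Q$ is a conic (self-intersection $-m+(m+4)=4$) and $f_*\tilde\Gamma_{\tilde{\sf q}}$ is a line ($0+1=1$) tangent to the conic at $f(\tilde{\sf q})$. The paper then quotes the standard fact that the complement of a smooth conic and a tangent line in $\bP^2$ is $\A^1\times\A^1_{\ast}$.

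Your intermediate surface, after contracting $\bar E_1,\bar E_i',\bar C$, is exactly that $\bP^2$ blown up twice at the tangency point. This is why $\bar Q$ becomes a $2$-curve and $\bar\Gamma$ a $(-1)$-curve. Your final contraction of $\bar\Gamma$ down to $\F_2$ therefore amounts to the paper's contraction plus a proof of the conic-plus-tangent-line fact. In $\F_2$ the boundary is the minimal section, a fiber, and a disjoint section, and the complement is visibly a trivialized $\mathbb G_m$-bundle over $\A^1$.

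The paper's version is shorter and avoids the bookkeeping over the infinitely near point. Yours is self-contained and parallels the proofs of Lemmas \ref{lem(1)} and \ref{lem(2)}. In particular, you check that no contracted curve passes through $\tilde{\sf q}$, using $(\tilde\Gamma_{\tilde{\sf q}}\cdot\tilde C)=(\tilde\Gamma_{\tilde{\sf q}}\cdot\tilde E_i')=0$. The paper's claim of tangency at $f(\tilde{\sf q})$ uses this fact implicitly.
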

\begin{proof}
Since $(\tilde{C} \cdot \tilde{E}_1) = (\tilde{C} \cdot \tilde{E}_2') = \dots = (\tilde{C} \cdot \tilde{E}_{m+4}') = 0$, 
the union $\tilde{C}+\tilde{E}_1+\sum_{i=2}^{m+4}\tilde{E}_i'$ is disjoint and contractible. 
By contracting this disjoint union, we obtain the birational morphism $f:\tilde{S} \to \bP^2$. 
Then $f_{\ast}(\tilde{Q})$ is an irreducible conic on $\bP^2$ and $f_{\ast}(\tilde{\Gamma}_{\tilde{\sf q}})$ is a tangent line of $f_{\ast}(\tilde{Q})$ at the point $f(\tilde{\sf q})$. 
See also Figure \ref{fig(4)}. 
Hence, we have: 
\begin{align*}
S \setminus \Supp \left( \Gamma_{\tilde{\sf q}} +C + E_1 + \sum_{i=2}^{m+4}E_i' \right) 
&\cong \tilde{S} \setminus \Supp \left( \tilde{Q} + \tilde{\Gamma}_{\tilde{\sf q}} + \tilde{C} + \tilde{E}_1 + \sum_{i=2}^{m+4}\tilde{E}_i' \right) \\
&\cong \bP^2 \setminus \Supp \left(f_{\ast}(\tilde{Q}) + f_{\ast}(\tilde{\Gamma}_{\tilde{\sf q}}) \right) \\
&\cong \A^1 \times \A^1_{\ast}.
\end{align*}
This completes the proof. 
\end{proof}
\begin{figure}[t]
\begin{center}
\begin{tikzpicture}[scale=0.35]
\draw (0,0) -- (13,0);
\node at (.35,6.35) {\footnotesize $\tilde{\Gamma}_{\tilde{\sf q}}$};
\node at (.35,.85) {\footnotesize $\tilde{Q}$};
\draw (0,5.5) 
.. controls (1,5.5) and (1,.1) .. (2.5,0)
.. controls (4,.1) and (4,5.5) .. (5,5.5)
.. controls (5,5.5) and (12,5.5) .. (13,5.5);
\draw (6.5,6) -- (6,3.5);
\draw (9,-1) -- (8.5,1.5);
\node at (10.25,1) {\footnotesize $\cdots$};
\draw (12.5,-1) -- (12,1.5);
\draw (6,-1) 
.. controls (7,3.5) and (9,4.4) .. (9.5,4.5)
.. controls (10,4.5) and (12,4.5) .. (13,4.5);
\node at (11.5,3.6) {\footnotesize $\tilde{C}$};
\node at (6.5,6.35) {\footnotesize $\tilde{E}_1$};
\node at (9,-1.85) {\footnotesize $\tilde{E}_2'$};
\node at (12.5,-1.85) {\footnotesize $\tilde{E}_{m+4}'$};

\node at (14.5,3) {$\overset{f}{\longrightarrow}$};

\draw (16,5) -- (24,5);
\draw (16,.5) 
.. controls (17,.5) and (17,4.9) .. (19,5)
.. controls (21,4.9) and (21,.5) .. (22,.5);
.. controls (22,.5) and (23,.5) .. (24,.5);

\node at (23.5,.75) {\footnotesize $f_{\ast}(\tilde{Q})$};
\node at (22,5.85) {\footnotesize $f_{\ast}(\tilde{\Gamma})$};
\end{tikzpicture}
\caption{Birational morphism $f:\tilde{S} \to \bP^2$}\label{fig(4)}
\end{center}
\end{figure}
\section{Proof of Theorem \ref{thm: main}}\label{3}
In this section, we prove Theorem \ref{thm: main}. 
Let $S$ be a log del Pezzo surface obtained by the blow-up of the weighted projective plane $\bP(1,1,m)$ at $m+4$ points in general position, where $m \ge 2$. 
Note that $S$ has a unique singular point ${\sf p}$ of type $\frac{1}{m}(1,1)$. 
Let $\pi:\tilde{S} \to S$ be the minimal resolution at ${\sf p}$, and let $\tilde{Q}$ be the reduced exceptional curve of $\pi$. 
Note that $\tilde{Q}$ is a single $(-m)$-curve on $\tilde{S}$. 

Let $H$ be an ample $\Q$-divisor on $S$ and let $\mu_H$ be the Fujita invariant of $H$. 
We may assume that $\mu_H=1$. 
Indeed, put $H' := \frac{1}{\mu_H}H$. Since $\mu_{H'} = \frac{1}{\mu_H} \cdot \mu_H = 1$ and every $H'$-polar cylinder is an $H$-polar cylinder, it is enough to replace $H$ with $H'$. 
Recall that $H$ is of type either ${\rm B}(r_H)$ or ${\rm C}(\ell_H)$. 
\subsection{Type ${\rm B}(r_H)$}\label{3-1}
In this subsection, we shall prove Theorem \ref{thm: main} (1). 
We keep the notation from the beginning of Section \ref{3} and assume further that $H$ is of type ${\rm B}(r_H)$ and $r_H^{sm}>0$. 
Then we can write: 
\begin{align*}
K_S + \mu_HH \sim_{\Q} \sum_{i=1}^{r_H}a_iL_i,
\end{align*}
where every $a_i$ is a positive number and the union $\sum_{i=1}^{r_H}L_i$ is contractible. 
For simplicity, we set $r := r_H$ and $s := r_H^{sm}$. 
We may assume that $(L_i)^2 = -1$ for $i=1,\dots,s$. 
\smallskip

By the assumption, the contraction $\phi_H:S \to S'$ of the Fujita face $\Delta_H$ is a birational morphism. 
By Proposition \ref{prop}, $S'$ is isomorphic to either $S_{m'}^{n'}$ or $\overline{S}_{m'}^{m'+1}$, where $m' := m-(r-s)$ and $n' := m+4-r$.  
We consider the following two cases separately. 
\medskip

\noindent
{\bf Case 1:} ($S' \cong S_{m'}^{n'}$). 
In this case, $S'$ is obtained by blowing-up $\bP(1,1,m')$ at $n'$ points in general position. 
Hence, there exists a birational morphism $\sigma: S' \to \bP(1,1,m')$. 
The morphism $\sigma \circ \phi_H$ induces the birational morphism $\psi': \tilde{S} \to \F_{m'}$. 
Then the composition $\tilde{S} \overset{\psi'}{\to} \F_{m'} \to \bP^1$ of $\psi'$ and the natural $\bP^1$-bundle $\F_{m'} \to \bP^1$ is a $\bP^1$-fibration with exactly $m+4$ singular fibers $\tilde{F}_1,\dots,\tilde{F}_{m+4}$, which are degenerate conics. 
Hence, for $i=1,\dots,m+4$, there exist two $(-1)$-curves $\tilde{E}_i$ and $\tilde{E}_i'$ on $\tilde{S}$ such that $\tilde{F}_i = \tilde{E}_i + \tilde{E}_i'$ and $(\tilde{E}_i \cdot \tilde{Q}) = 0$. 
Letting $\psi:\tilde{S} \to \F_m$ be the contraction of $\tilde{E}_1,\dots,\tilde{E}_{m+4}$, the composition $\varphi: \tilde{S} \overset{\psi}{\to} \F_m \to \bP^1$ of $\psi$ and the natural $\bP^1$-bundle $\F_m \to \bP^1$ is a $\bP^1$-fibration with a section $\tilde{Q}$ and exactly $m+4$ singular fibers $\tilde{F}_1,\dots,\tilde{F}_{m+4}$. 
See also the following commutative diagram: 
\begin{align*}
\xymatrix@C=36pt@R=18pt{
S \ar[d]_{\phi_H} & \tilde{S} \ar[l]_{\pi} \ar[dd]_{\psi'} \ar@{=}[r] & \tilde{S} \ar[dd]_{\psi} \ar@/^24pt/[ddd]^{\varphi} \\
S_{m'}^{n'} \ar[d]_{\sigma} && \\
\bP(1,1,m') & \F_{m'} \ar[l] \ar[d] & \F_m \ar[d] \\
& \bP^1 \ar@{=}[r] & \bP^1
}
\end{align*}
Note that $\pi^{-1}_{\ast}(L_1),\dots,\pi^{-1}_{\ast}(L_r)$ are fiber components of $\varphi$. 
Hence, we may assume that: 
\begin{align*}
\pi^{-1}_{\ast}(L_i) = \left\{\begin{array}{cc}\tilde{E}_i & i \le s \\ \tilde{E}_i' & i > s\end{array}\right..
\end{align*}
\smallskip

Let $\tilde{F}$ be a general fiber of $\varphi$, and let $\tilde{\sf q}$ and $\tilde{\Gamma}_{\tilde{\sf q}}$ be the same as in Lemma \ref{curve}. 
Recall that:
\begin{align*}
\tilde{\Gamma}_{\tilde{\sf q}} \sim \tilde{Q} + (m+2)\tilde{F} - \tilde{E}_1-\dots-\tilde{E}_{m+4}
\end{align*}
and $I_{\tilde{\sf q}}(\tilde{\Gamma}_{\tilde{\sf q}},\tilde{Q}) = 2$. 
Let $\tilde{F}_{\tilde{\sf q}}$ be the fiber of $\varphi$ passing through $\tilde{\sf q}$. 
Note that the configuration of the $\bP^1$-fibration $\varphi$ looks like that in Figure \ref{fig(1)}. 
\smallskip

Without loss of generality, we may assume that $a_1 \ge a_2 \ge \dots \ge a_s$. 
Moreover, we write $a_i := 0$ for every $i=r+1,\dots,m+4$ when $r < m+4$. 
Put $\Gamma_{\tilde{\sf q}} := \pi_{\ast}(\tilde{\Gamma}_{\tilde{\sf q}})$, $F_{\tilde{\sf q}} := \pi_{\ast}(\tilde{F}_{\tilde{\sf q}})$, $E_i := \pi_{\ast}(\tilde{E}_i)$ and $E_i' := \pi_{\ast}(\tilde{E}_i')$ for $i=1,\dots ,m+4$. 
Note that: 
\begin{align*}
H \sim_{\Q} (m+2)F_{\tilde{\sf q}} - \sum_{i=1}^s(1-a_i)E_i - \sum_{j=s+1}^{m+4} (E_j - a_jE_j'), 
\end{align*}
where $\sum_{j=s+1}^{m+4} (E_j - a_jE_j') = 0$ provided $s=m+4$. 
We consider the following four subcases separately. 
\smallskip

\noindent
{\bf Subcase 1-1:} ($s=m+4$). 
In this subcase, let $\varepsilon$ be a positive number satisfying $\varepsilon < a_s = a_{m+4}$, and let $D$ be the effective $\Q$-divisor on $S$ defined by: 
\begin{align*}
D := (1-a_{m+4}+\varepsilon) \Gamma_{\tilde{\sf q}} + (m+2)(a_{m+4}-\varepsilon)F_{\tilde{\sf q}} +\sum_{i=1}^{m+4}(a_i-a_{m+4}+\varepsilon)E_i. 
\end{align*}
By Lemma \ref{lem(1)}, we know $S \setminus \Supp(D) \cong \A^1 \times \A^1_{\ast}$. 
Moreover, since $\Gamma_{\tilde{\sf q}} \sim_{\Q} (m+2)F_{\tilde{\sf q}} - \sum_{i=1}^{m+4}E_i$, we have: 
\begin{align*}
D &\sim_{\Q} (1-a_{m+4}+\varepsilon) \left\{ (m+2)F_{\tilde{\sf q}} -\sum_{i=1}^{m+4}E_i\right\} + (m+2)(a_{m+4}-\varepsilon)F_{\tilde{\sf q}} + \sum_{i=1}^{m+4}(a_i-a_{m+4}+\varepsilon)E_i \\
&\sim_{\Q} (m+2)F_{\tilde{\sf q}} - \sum _{i=1}^{m+4}(1-a_i)E_i \\
&\sim_{\Q} H. 
\end{align*}
Hence, $S \setminus \Supp(D)$ is an $H$-polar cylinder. 
\smallskip

\noindent
{\bf Subcase 1-2:} ($2 < s < m+4$). 
In this subcase, let $\varepsilon$ be a positive number satisfying $\varepsilon < a_s$ and let $D$ be the effective $\Q$-divisor on $S$ defined by: 
\begin{align*}
D := (1-a_s+\varepsilon) \Gamma_{\tilde{\sf q}} + (s-2)(a_s-\varepsilon)F_{\tilde{\sf q}} +\sum_{i=1}^s(a_i-a_s+\varepsilon)E_i + \sum _{j=s+1}^{m+4}(a_s+a_j-\varepsilon)E_j'. 
\end{align*}
By Lemma \ref{lem(1)}, we know $S \setminus \Supp(D) \cong \A^1 \times \A^1_{\ast}$. 
Moreover, since $\Gamma_{\tilde{\sf q}} \sim_{\Q} (m+2)F_{\tilde{\sf q}} - \sum_{i=1}^{m+4}E_i$ and $E_j' \sim_{\Q} F_{\tilde{\sf q}} - E_j$ for $j=s+1,\dots ,m+4$, we have: 
\begin{align*}
D &\sim_{\Q} (1-a_s+\varepsilon) \left\{ (m+2)F_{\tilde{\sf q}} -\sum_{i=1}^sE_i -\sum_{j=s+1}^{m+4}E_j \right\} \\
&\qquad+ (s-2)(a_s-\varepsilon)F_{\tilde{\sf q}} + \sum_{i=1}^s(a_i-a_s+\varepsilon)E_i + \sum _{j=s+1}^{m+4}\left\{ (a_s-\varepsilon)(F_{\tilde{\sf q}}-E_j) + a_j E_j' \right\} \\
&\sim_{\Q} (m+2)F_{\tilde{\sf q}} - \sum_{i=1}^s(1-a_i)E_i - \sum_{j=s+1}^{m+4} (E_j - a_jE_j') \\
&\sim_{\Q} H. 
\end{align*}
Hence, $S \setminus \Supp(D)$ is an $H$-polar cylinder. 
\smallskip

\noindent
{\bf Subcase 1-3:} ($s=2$). 
In this subcase, we take a $0$-curve $\tilde{C}_{\tilde{\sf q}}$ on $\tilde{S}$ passing through $\tilde{\sf q}$ such that
\begin{align*}
\tilde{C}_{\tilde{\sf q}} \sim \tilde{Q} + m\tilde{F}_{\tilde{\sf q}} - \sum_{i=3}^{m+4}\tilde{E}_i. 
\end{align*}
Note that such a $0$-curve $\tilde{C}_{\tilde{\sf q}}$ exists by Lemma \ref{curve(2)}. 
Put $C_{\tilde{\sf q}} := \pi_{\ast}(\tilde{C}_{\tilde{\sf q}})$. 
Let $\varepsilon$ be a positive rational number satisfying $2\varepsilon < \min\{ 1,a_2\}$, and let $D$ be the effective $\Q$-divisor on $S$ defined by: 
\begin{align*}
D := (1-2\varepsilon) \Gamma_{\tilde{\sf q}} + 2\varepsilon F_{\tilde{\sf q}} + \varepsilon C_{\tilde{\sf q}} + (a_1-2\varepsilon)E_1  + (a_2-2\varepsilon)E_2 + \sum_{i=3}^{m+4}(a_i+\varepsilon)E_i'. 
\end{align*}
By Lemma \ref{lem(2)},  we know $S \setminus \Supp(D) \cong \A^1 \times \A^1_{\ast \ast}$. 
Moreover, since $\Gamma_{\tilde{\sf q}} \sim_{\Q} (m+2)F_{\tilde{\sf q}} - \sum_{i=1}^{m+4}E_i$, $C_{\tilde{\sf q}} \sim_{\Q} mF_{\tilde{\sf q}} - \sum_{i=3}^{m+4}E_i'$ and $E_i' \sim_{\Q} F_{\tilde{\sf q}} - E_i$ for $i=3,\dots ,m+4$, we have: 
\begin{align*}
D &\sim_{\Q} (1-2\varepsilon)\left\{ (m+2)F_{\tilde{\sf q}} - E_1 - E_2 - \sum_{i=3}^{m+4}E_i \right\} + 2\varepsilon F_{\tilde{\sf q}} \\
&\qquad+ \varepsilon \left( m F_{\tilde{\sf q}} - \sum_{i=3}^{m+4}E_i \right) + (a_1-\varepsilon)E_1 + (a_2-\varepsilon)E_2 + \sum_{i=3}^{m+4}\left\{ \varepsilon(F_{\tilde{\sf q}}-E_i) + a_iE_i'\right\} \\
&\sim_{\Q} (m+2) F_{\tilde{\sf q}} - (1-a_1)E_1 - (1-a_2)E_2 - \sum _{i=3}^{m+4}(E_i-a_iE_i') \\
&\sim_{\Q} H. 
\end{align*}
Hence, $S \setminus \Supp(D)$ is an $H$-polar cylinder. 
\smallskip

\noindent
{\bf Subcase 1-4:} ($s=1$). 
In this case, we take a $(-1)$-curve $\tilde{C}$ on $\tilde{S}$ such that:
\begin{align*}
\tilde{C} \sim \tilde{Q} + (m+1)\tilde{F} - \sum_{i=2}^{m+4}\tilde{E}_i. 
\end{align*}
Note that such a $(-1)$-curve $\tilde{C}$ exists by Lemma \ref{lem: (-1)-curve (2)} (2). 
Put $C := \pi_{\ast}(\tilde{C})$. 
Let $\varepsilon$ be a positive rational number satisfying $2\varepsilon < \min\{1,a_1\}$, and let $D$ be the effective $\Q$-divisor on $S$ defined by: 
\begin{align*}
D := (1-2\varepsilon) \Gamma_{\tilde{\sf q}} + \varepsilon C + (a_1-2\varepsilon)E_1 + \sum_{i=2}^{m+4}(a_i+\varepsilon)E_i'. 
\end{align*}
By Lemma \ref{lem(3)}, we know $S \setminus \Supp(D) \cong \A^1 \times \A^1_{\ast}$. 
Moreover, since $\Gamma_{\tilde{\sf q}} \sim_{\Q} (m+2)F_{\tilde{\sf q}} - E_1 - \sum_{i=2}^{m+4}E_i$, $C \sim_{\Q} (m+1)F_{\tilde{\sf q}} - \sum_{i=2}^{m+4}E_i'$ and $E_i' \sim_{\Q} F_{\tilde{\sf q}} - E_i$ for $i=2,\dots ,m+4$, we have: 
\begin{align*}
D &\sim_{\Q} (1-2\varepsilon)\left\{ (m+2)F_{\tilde{\sf q}} - E_1- \sum_{i=2}^{m+4}E_i \right\} + \varepsilon \left( (m+1) F_{\tilde{\sf q}} - \sum_{i=2}^{m+4}E_i \right) \\
&\qquad+ (a_1-2\varepsilon)E_1 + \sum_{i=2}^{m+4}\left\{\varepsilon(F_{\tilde{\sf q}}-E_j) +a_iE_i'\right\} \\
&\sim_{\Q} (m+2)F_{\tilde{\sf q}} - (1-a_1)E_1 - \sum _{i=2}^{m+4}(E_i - a_iE_i') \\
&\sim_{\Q} H. 
\end{align*}
Hence, $S \setminus \Supp(D)$ is an $H$-polar cylinder. 
\medskip

\noindent
{\bf Case 2:} ($S' \cong \overline{S}_{m'}^{m'+1}$). 
In this case, we note that $\phi_H$ can be factored as $\alpha'': S \to S''$ and $\alpha': S'' \to S'$ such that $S'' \cong S_{m'}^{m'+1}$. 
Hence, we know $s=4$. 
Without loss of generality, we may assume that $a_1 \ge a_2 \ge a_3 \ge a_4 (>0)$. 
Let $\beta'':S \to \overline{S}''$ be the contraction of $L_1,\dots,L_r$ other than $L_4$, and let $\beta':\overline{S}'' \to \overline{S}'$ be the contraction of $L_4'' := \beta''_{\ast}(L_4)$. Then $\overline{S}' = S'$ because $\alpha' \circ \alpha'' = \phi_H = \beta' \circ \beta''$. 
In particular, $\overline{S}'' \cong S_{m'}^{m'+1}$. 
Since $\overline{S}''$ is obtained by blowing-up $\bP(1,1,m')$ at $m'+1$ points in general position, there exists a birational morphism $\sigma: \overline{S}'' \to \bP(1,1,m')$. 
The morphism $\sigma \circ \beta''$ induces the birational morphism $\psi': \tilde{S} \to \F_{m'}$. 
Then the composition $\tilde{S} \overset{\psi'}{\to} \F_{m'} \to \bP^1$ of $\psi'$ and the natural $\bP^1$-bundle $\F_{m'} \to \bP^1$ is a $\bP^1$-fibration with exactly $m+4$ singular fibers $\tilde{F}_1,\dots,\tilde{F}_{m+4}$, which are degenerate conics. 
Hence, for $i=1,\dots,m+4$, there exist two $(-1)$-curves $\tilde{E}_i$ and $\tilde{E}_i'$ on $\tilde{S}$ such that $\tilde{F}_i = \tilde{E}_i + \tilde{E}_i'$ and $(\tilde{E}_i \cdot \tilde{Q}) = 0$. 
Letting $\psi:\tilde{S} \to \F_m$ be the contraction of $\tilde{E}_1,\dots,\tilde{E}_{m+4}$, the composition $\varphi: \tilde{S} \overset{\psi}{\to} \F_m \to \bP^1$ of $\psi$ and the natural $\bP^1$-bundle $\F_m \to \bP^1$ is a $\bP^1$-fibration with a section $\tilde{Q}$ and exactly $m+4$ singular fibers $\tilde{F}_1,\dots,\tilde{F}_{m+4}$. 
See also the following commutative diagram: 
\begin{align*}
\xymatrix@C=36pt@R=18pt{
&S \ar[ld]_{\phi_H} \ar[d]_{\beta''} & \tilde{S} \ar[l]_{\pi} \ar[dd]_{\psi'} \ar@{=}[r] & \tilde{S} \ar[dd]_{\psi} \ar@/^24pt/[ddd]^{\varphi} \\
\overline{S}_{m'}^{m'+1} & S_{m'}^{m'+1} \ar[l]^{\beta'} \ar[d]_{\sigma} && \\
&\bP(1,1,m') & \F_{m'} \ar[l] \ar[d] & \F_m \ar[d] \\
&& \bP^1 \ar@{=}[r] & \bP^1
}
\end{align*}
Note that $\pi^{-1}_{\ast}(L_1),\dots,\pi^{-1}_{\ast}(L_r)$ except for $\pi^{-1}_{\ast}(L_4)$ are fiber components of $\varphi$. 
Hence, we may assume that: 
\begin{align*}
\pi^{-1}_{\ast}(L_i) = \left\{\begin{array}{cc}\tilde{E}_i & i \le 3 \\ \tilde{E}_i' & i > 4\end{array}\right..
\end{align*}
Then we notice:
\begin{align*}
\tilde{L}_4 := \pi^{-1}_{\ast}(L_4) \sim \tilde{Q} + m \tilde{F} - \tilde{E}_4 - \sum_{j=5}^{m+4}\tilde{E}_j.
\end{align*} 
\smallskip

Let $\tilde{F}$ be a general fiber of $\varphi$, and let $\tilde{\sf q}$ and $\tilde{\Gamma}_{\tilde{\sf q}}$ be the same as in Lemma \ref{curve}. 
Recall that:
\begin{align*}
\tilde{\Gamma}_{\tilde{\sf q}} \sim \tilde{Q} + (m+2)\tilde{F} - (\tilde{E}_1+\dots+\tilde{E}_{m+4})
\end{align*}
and $I_{\tilde{\sf q}}(\tilde{\Gamma}_{\tilde{\sf q}},\tilde{Q}) = 2$. 
Let $\tilde{F}_{\tilde{\sf q}}$ be the fiber of $\varphi$ passing through $\tilde{\sf q}$. 
Note that the configuration of the $\bP^1$-fibration $\varphi$ looks like that in Figure \ref{fig(6)}. 
\medskip

We note $1-a_4>0$ because $s=4$. 
Moreover, we write $a_i := 0$ for every $i=r+1,\dots,m+4$ when $r < m+4$. 
Set $\Gamma_{\tilde{\sf q}} := \pi_{\ast}(\tilde{\Gamma}_{\tilde{\sf q}})$, $F_{\tilde{\sf q}} := \pi_{\ast}(\tilde{F}_{\tilde{\sf q}})$, $E_i := \pi_{\ast}(\tilde{E}_i)$ and $E_i' := \pi_{\ast}(\tilde{E}_i')$ for $i=1,\dots,m+4$. 

Let $\varepsilon$ be a positive number satisfying $\varepsilon < \min\{a_4,1-a_4\}$, and let $D$ be the effective $\Q$-divisor on $S$ defined by: 
\begin{align*}
D := (1-a_4+\varepsilon)\Gamma_{\tilde{\sf q}} + \sum_{i=1}^3(a_i-a_4+\varepsilon)E_i + \varepsilon L_4 + 2(a_4-\varepsilon)E_4' + \sum_{j=5}^{m+4}\left\{a_j+2(a_4-\varepsilon)\right\}E_j'. 
\end{align*}
Since the configuration of the $\bP^1$-fibration $\varphi: \tilde{S} \to \bP^1$ looks like that in Figure \ref{fig(6)}, we know that $S \setminus \Supp(D) \cong \A^1 \times \A^1_{\ast}$ (cf.\ Lemma \ref{lem(3)}). 
Moreover, since $\Gamma_{\tilde{\sf q}} \sim_{\Q} (m+2)F_{\tilde{\sf q}} - \sum_{i=1}^{m+4}E_i$, $L_4 \sim_{\Q} mF_{\tilde{\sf q}} - \sum_{i=4}^{m+4}E_i$, and $E_i' \sim_{\Q} F_{\tilde{\sf q}} - E_i$ for $i=4,\dots,m+4$, we have: 
\begin{align*}
D &\sim_{\Q} (1-a_4+\varepsilon) \left\{ (m+2)F_{\tilde{\sf q}} - \sum_{i=1}^3E_i - E_4 - \sum_{j=5}^{m+4}E_j \right\} \\
&\qquad + \sum_{i=1}^3(a_i-a_4+\varepsilon)E_i 
+\varepsilon \left( mF_{\tilde{\sf q}} - E_4 - \sum_{j=5}^{m+4}E_j\right) \\
&\qquad\qquad+ 2(a_4-\varepsilon)(F_{\tilde{\sf q}} -E_4) + \sum_{j=5}^{m+4}2(a_4-\varepsilon) (F_{\tilde{\sf q}} - E_j) + \sum_{j=5}^ra_jE_j' \\
&\sim_{\Q} \left\{ (m+2)F_{\tilde{\sf q}} - \sum_{i=1}^3E_i - E_4 - \sum_{j=5}^{m+4}E_j \right\} + \sum_{i=1}^3a_iE_i + a_4\left( mF_{\tilde{\sf q}} - E_4 - \sum_{j=5}^rE_j \right) + \sum_{j=5}^ra_jE_j' \\
&\sim_{\Q} -K_S + \sum_{i=1}^3a_iL_i + a_4L_4 + \sum_{j=5}^ra_jL_j \\
&\sim_{\Q} H. 
\end{align*}
Hence, $S \setminus \Supp(D)$ is an $H$-polar cylinder. 
\begin{figure}[t]
\begin{center}
\begin{tikzpicture}[scale=0.7]
\draw (0,0) -- (16,0);
\node at (.35,6) {\footnotesize $\tilde{\Gamma}_{\tilde{\sf q}}$};
\node at (-.65,0) {\footnotesize $\tilde{Q}$};
\draw (0,5.5) 
.. controls (1,5.5) and (1,.1) .. (2.5,0)
.. controls (4,.1) and (4,5.5) .. (5,5.5)
.. controls (5,5.5) and (12,5.5) .. (16,5.5);
\draw (6,6) -- (5.5,3.5);
\draw (7.25,6) -- (6.75,3.5);
\draw (8.5,6) -- (8,3.5);
\draw (10,6) -- (9.5,3.5);
\draw (11.5,-1) -- (11,1.5);
\draw (12.75,-1) -- (12.25,1.5);
\node at (13.25,1) {\footnotesize $\cdots$};
\draw (14.75,-1) -- (14.25,1.5);
\node at (6,6.35) {\footnotesize $\tilde{E}_1$};
\node at (7.25,6.35) {\footnotesize $\tilde{E}_2$};
\node at (8.5,6.35) {\footnotesize $\tilde{E}_3$};
\node at (10,6.35) {\footnotesize $\tilde{L}_4$};
\node at (11.5,-1.85) {\footnotesize $\tilde{E}_4'$};
\node at (12.75,-1.85) {\footnotesize $\tilde{E}_5'$};
\node at (14.75,-1.85) {\footnotesize $\tilde{E}_{m+4}'$};

\end{tikzpicture}
\caption{Configuration of the $\bP^1$-fibration $\varphi: \tilde{S} \to \bP^1$}\label{fig(6)}
\end{center}
\end{figure}
\medskip

The proof of Theorem \ref{thm: main} (1) is thus completed. 
\subsection{Type ${\rm C}(\ell_H)$}\label{3-2}
In this subsection, we shall prove Theorem \ref{thm: main} (2). 
We keep the notation from the beginning of Section \ref{3} and assume further that $H$ is of type ${\rm C}(\ell_H)$. 
Then we can write: 
\begin{align*}
K_S + \mu_HH \sim_{\Q} aB + \sum_{i=1}^{m+3}a_iL_i,
\end{align*}
where $a$ is a positive number, every $a_i$ is a non-negative number, $B$ is a $0$-curve and the union $\sum_{i=1}^{m+3}L_i$ is contractible. 
For simplicity, we set $r := \ell_H$ and $s := \ell_H^{sm}$. 
Then the contraction $\phi_H:S \to \bP^1$ associated to the Fujita face $\Delta_H$ is a $\bP^1$-fibration. 
Since a general fiber of $\phi_H$ does not pass through ${\sf p}$, $\phi_H \circ \pi:\tilde{S} \to \bP^1$ is also a $\bP^1$-fibration. 
Then the $\bP^1$-fibration $\phi_H \circ \pi:\tilde{S} \to \bP^1$ admits exactly five singular fibers $\tilde{B}_1$, $\tilde{B}_2$, $\tilde{B}_3$, $\tilde{B}_4$ and $\tilde{B}_5$ (see \S \ref{2-1}). 
Set $\tilde{L}_i := \pi^{-1}_{\ast}(L_i)$ for $i=1,\dots,m+3$. Note that they are $(-1)$-curves on $\tilde{S}$. 
Since $\tilde{L}_1,\dots,\tilde{L}_{m+3}$ and $\tilde{Q}$ are fiber components of $\phi_H \circ \pi$, there exist $(-1)$-curves $\tilde{L}_1'$, $\tilde{L}_2'$, $\tilde{L}_3'$, $\tilde{L}_4'$ and $\tilde{L}_{m+4}$ on $\tilde{S}$ such that we may assume that all singular fibers of $\phi_H \circ \pi$ are given by $\tilde{B}_i = \tilde{L}_i + \tilde{L}_i'$ for $i=1,2,3,4$ and $\tilde{B}_5 = \tilde{Q} + \tilde{L}_5 + \dots + \tilde{L}_{m+3} + \tilde{L}_{m+4}$. 
Let $\tau_1:\tilde{S} \to \hat{S}$ be the contraction of $\tilde{L}_1,\dots,\tilde{L}_{m+3}$. 
Since $\hat{Q} := \tau_{1,\ast}(\tilde{Q})$ is a $(-1)$-curve on $\hat{S}$ by virtue of $(-K_{\hat{S}})^2 = (-K_{\tilde{S}})^2 + (m+3) = 7$ and $(\hat{Q})^2 = (\tilde{Q})^2 + (m+3-4) = -1$, $\hat{S}$ is a smooth del Pezzo surface of degree $7$. 
Let $\pi_1: \hat{S} \to \check{S}$ be the contraction of $\hat{Q}$. 
Then either $\check{S} \cong \F_1$ or $\check{S} \cong \bP^1 \times \bP^1$. 
We consider the following two cases separately. 
\smallskip

\noindent
{\bf Case 1:} ($\check{S} \cong \F_1$). 
In this case, there exists a unique $(-1)$-curve $\hat{L}$ on $\hat{S}$ such that $(\hat{Q} \cdot \hat{L}) = 0$. 
Hence, we obtain the contraction $\tau_2:\hat{S} \to \F_1$ of $\hat{L}$. 
We know that there exist exactly $m+4$ singular fibers $\tilde{F}_1,\dots,\tilde{F}_{m+4}$ of the $\bP^1$-fibration $\tilde{S} \overset{\tau_2 \circ \tau_1}{\longrightarrow} \F_1 \to \bP^1$ and they are degenerate conics, where $\F_1 \to \bP^1$ is the natural $\bP^1$-bundle. 
Hence, for $i=1,\dots,m+4$, there exist two $(-1)$-curves $\tilde{E}_i$ and $\tilde{E}_i'$ on $\tilde{S}$ such that $\tilde{F}_i = \tilde{E}_i + \tilde{E}_i'$ and $(\tilde{E}_i \cdot \tilde{Q}) = 0$. 
Considering the contraction $\psi:\tilde{S} \to \F_m$ of $\tilde{E}_1,\dots,\tilde{E}_{m+4}$, we have a $\bP^1$-fibration $\varphi:\tilde{S} \overset{\psi}{\to} \F_m \to \bP^1$ such that $\tilde{Q}$ is a section and every $\tilde{L}_i$ is a fiber component of $\varphi$, where $\F_m \to \bP^1$ is the natural $\bP^1$-bundle. 
See also the following commutative diagram: 
\begin{align*}
\xymatrix@C=24pt@R=24pt{
\bP^1&S \ar[l]_{\phi_H} & \tilde{S} \ar[l]_{\pi} \ar[d]_{\tau_1} \ar@{=}[r] & \tilde{S} \ar[dd]_{\psi} \ar@/^24pt/[ddd]^{\varphi} \\
&\check{S} & \hat{S} \ar[l]_{\pi_1} \ar[d]_{\tau_2} && \\
&& \F_1 \ar[d] & \F_m \ar[d] \\
&& \bP^1 \ar@{=}[r] & \bP^1
}
\end{align*}
We may assume that: 
\begin{align*}
\tilde{L}_i = \left\{ \begin{array}{cl}\tilde{E}_i & \text{if $i \le 4$} \\ \tilde{E}_i' & \text{if $i \ge 5$} \end{array}\right. 
\end{align*}
for $i=1,\dots ,m+4$. 
Let $\tilde{F}$ be a general fiber of $\varphi$. 
We note that: 
\begin{align*}
\pi^{-1}_{\ast}(B) &\sim \tilde{Q} + \tilde{E}_5' + \dots +\tilde{E}_{m+4}' \\
&\sim \tilde{Q} + m\tilde{F} - \tilde{E}_5 - \dots - \tilde{E}_{m+4}.
\end{align*}
%
Put $F := \pi_{\ast}(\tilde{F})$, $E_i := \pi_{\ast}(\tilde{E}_i)$ and $E_i' := \pi_{\ast}(\tilde{E}_i')$ for $i=1,\dots ,m+4$. 
\smallskip

Suppose that $s>0$. 
By similar arguments on Subcases 1-2, 1-3 and 1-4 according to $s$, there exists an effective $\Q$-divisor $D'$ on $S$ such that $D' \sim_{\Q} H - aB$, $S \setminus \Supp(D') \cong \A^1 \times \A^1_{\ast}$, and $E_5' \cup \dots \cup E_{m+4}' \subseteq \Supp(D')$. 
By virtue of $B \sim_{\Q} E_5' + \dots + E_{m+4}'$, we know that $D := D' + a\sum_{i=5}^{m+4}E_i'$ is effective $\Q$-divisor satisfying $D \sim_{\Q} H$ and $S \setminus \Supp (D) = S \setminus \Supp(D') \cong \A^1 \times \A^1_{\ast}$. 
Hence, $S \setminus \Supp (D)$ is an $H$-polar cylinder. 
\smallskip

In what follows, we assume that $s=0$ and $a>3$. Put $L_i' := \pi_{\ast}(\tilde{L}_i)$ for $i=1,2,3,4$. Moreover, we write $a_{m+4} := 0$. 
Let $\varepsilon$ be a positive number satisfying $4\varepsilon < a-3$, and let $D$ be the effective $\Q$-divisor on $S$ defined by: 
\begin{align*}
D := 2F + \sum_{i=1}^4 \left\{ \varepsilon E_i + (1+\varepsilon)L_i'\right\} + \sum_{j=5}^{m+4}( a+a_j-3 - 4\varepsilon)E_j'. 
\end{align*}
Since $(\tilde{F} \cdot \pi^{-1}_{\ast}(B)) = 1$, the configuration of the $\bP^1$-fibration $\phi_H \circ \pi: \tilde{S} \to \bP^1$ looks like that in Figure \ref{fig(5)}. 
Thus, we know that $S \setminus \Supp(D) \cong \A^1 \times (\A^1 \setminus \{\text{$4$ points}\})$. 
Moreover, since $L_i' \sim_{\Q} B-E_i \sim_{\Q} mF - (E_i+E_5+\dots+E_{m+4})$ for $i=1,2,3,4$, we have: 
\begin{align*}
D &\sim_{\Q} 2F + \sum_{i=1}^4  \left\{ \varepsilon E_i + (1+\varepsilon)(mF-E_i - E_5-\dots-E_{m+4}) \right\} \\
&\qquad+ ( a-3 + 4\varepsilon)(mF - E_5 - \dots - E_{m+4}) + \sum_{j=5}^{m+4}a_jE_j'  \\
&\sim_{\Q} \left\{(m+2)F -(E_1 + \dots + E_{m+4})\right\} + a(mF - E_5-\dots -E_{m+4}) + \sum_{i=1}^40E_i + \sum_{j=5}^{m+4}a_jL_j \\
&\sim_{\Q} -K_S + aB + \sum_{i=1}^{m+3}a_iL_i \\
&\sim_{\Q} H. 
\end{align*}
Hence, $S \setminus \Supp(D)$ is an $H$-polar cylinder. 
\begin{figure}[t]
\begin{center}
\begin{tikzpicture}[scale=0.7]
\draw (2,-1) -- (1,3);
\draw (2,6) -- (1,2);
\draw (3.5,-1) -- (2.5,3);
\draw (3.5,6) -- (2.5,2);
\draw (5,-1) -- (4,3);
\draw (5,6) -- (4,2);
\draw (6.5,-1) -- (5.5,3);
\draw (6.5,6) -- (5.5,2);
\draw (7.5,1.5) -- (9.5,1.5);
\draw (7.5,4.5) -- (9.5,4.5);
\draw[very thick] (8.5,6) -- (8.5,-1);
\node at (8,3.35) {\large $\vdots$};
\node at (8,2.65) {\large $\vdots$};
\draw (0,0) -- (10.5,0);

\node at (-.4,0) {$\tilde{F}$};
\node at (2,-1.45) {$\tilde{L}_1'$};
\node at (3.5,-1.45) {$\tilde{L}_2'$};
\node at (5,-1.45) {$\tilde{L}_3'$};
\node at (6.5,-1.45) {$\tilde{L}_4'$};
\node at (2,6.35) {$\tilde{E}_1$};
\node at (3.5,6.35) {$\tilde{E}_2$};
\node at (5,6.35) {$\tilde{E}_3$};
\node at (6.5,6.35) {$\tilde{E}_4$};
\node at (9.9,1.5) {$\tilde{E}_5'$};
\node at (10.3,4.5) {$\tilde{E}_{m+4}'$};
\end{tikzpicture}
\caption{Configuration of the $\bP^1$-fibration $\phi_H \circ \pi: \tilde{S} \to \bP^1$}\label{fig(5)}
\end{center}
\end{figure}
\medskip

\noindent
{\bf Case 2:} ($\check{S} \cong \bP^1 \times \bP^1$). 
In this case, let $\tau_1':\tilde{S} \to \hat{S}'$ be the contraction of $\tilde{L}_1,\dots,\tilde{L}_3,\tilde{L}_4',\tilde{L}_5,\dots,\tilde{L}_{m+3}$ and $\pi_1':\hat{S}' \to \check{S}'$ be the contraction of $\hat{Q}' := \tau'_{1,\ast}(\tilde{Q})$. 
Then we know that $\check{S}' \cong \F_1$. 
If $s < 4$, then we have: 
\begin{align*}
K_S + \mu_HH \sim_{\Q} aB + a_1L_1 + a_2L_2 + a_3L_3 + a_4\pi_{\ast}(\tilde{L}_4') + \sum_{j=5}^{m+3}a_jL_j, 
\end{align*}
because of $a_4=0$, so that we can reduce the argument to Case 1 by replacing the role of $L_4$ with that of $\pi_{\ast}(\tilde{L}_4')$. 
Thus, we may assume $s=4$ in what follows. 

Since $\hat{S}'$ is a smooth del Pezzo surface of degree $7$, there exists a unique $(-1)$-curve $\hat{L}'$ on $\hat{S}'$ such that $(\hat{Q}' \cdot \hat{L}') = 0$. 
Hence, we obtain the contraction $\tau_2':\hat{S}' \to \F_1$ of $\hat{L}'$. 
We know that there exist exactly $m+4$ singular fibers $\tilde{F}_1,\dots,\tilde{F}_{m+4}$ of the $\bP^1$-fibration $\tilde{S} \overset{\tau_2' \circ \tau_1'}{\longrightarrow} \F_1 \to \bP^1$ and they are degenerate conics, where $\F_1 \to \bP^1$ is the natural $\bP^1$-bundle. 
Hence, for $i=1,\dots,m+4$, there exist two $(-1)$-curves $\tilde{E}_i$ and $\tilde{E}_i'$ on $\tilde{S}$ such that $\tilde{F}_i = \tilde{E}_i + \tilde{E}_i'$ and $(\tilde{E}_i \cdot \tilde{Q}) = 0$. 
Considering the contraction $\psi':\tilde{S} \to \F_m$ of $\tilde{E}_1,\dots,\tilde{E}_{m+4}$, we have a $\bP^1$-fibration $\varphi:\tilde{S} \overset{\psi'}{\to} \F_m \to \bP^1$ such that $\tilde{Q}$ is a section and every $\tilde{L}_i$ with $i \not= 4$, as well as $\tilde{L}_4'$ are fiber components of $\varphi$, where $\F_m \to \bP^1$ is the natural $\bP^1$-bundle. 
See also the following commutative diagram: 
\begin{align*}
\xymatrix@C=24pt@R=24pt{
\bP^1&S \ar[l]_{\phi_H} & \tilde{S} \ar[l]_{\pi} \ar[d]_{\tau_1'} \ar@{=}[r] & \tilde{S} \ar[dd]_{\psi'} \ar@/^24pt/[ddd]^{\varphi} \\
&\check{S}' & \hat{S}' \ar[l]_{\pi_1'} \ar[d]_{\tau_2'} && \\
&& \F_1 \ar[d] & \F_m \ar[d] \\
&& \bP^1 \ar@{=}[r] & \bP^1
}
\end{align*}
We may assume that: 
\begin{align*}
\tilde{L}_i = \left\{ \begin{array}{cl}\tilde{E}_i & \text{if $i \le 3$} \\ \tilde{E}_i' & \text{if $i \ge 5$} \end{array}\right. 
\end{align*}
for $i=1,\dots ,m+4$ and: 
\begin{align*}
\tilde{L}_4' = \tilde{E}_4.
\end{align*}
Let $\tilde{F}$ be a general fiber of $\varphi$ and let $\tilde{\sf q}$ and $\tilde{\Gamma}_{\tilde{\sf q}}$ be the same as in Lemma \ref{curve}. 
Recall that: 
\begin{align*}
\tilde{\Gamma}_{\tilde{\sf q}} \sim \tilde{Q} + (m+2)\tilde{F} - (\tilde{E}_1+\dots+\tilde{E}_{m+4})
\end{align*}
and $I_{\tilde{\sf q}}(\tilde{\Gamma}_{\tilde{\sf q}},\tilde{Q}) = 2$. 
Let $\tilde{F}_{\tilde{\sf q}}$ be the fiber of $\varphi$ passing through $\tilde{\sf q}$. 
We note that: 
\begin{align*}
\pi^{-1}_{\ast}(B) &\sim \tilde{Q} + \tilde{E}_5' + \dots +\tilde{E}_{m+4}' \\
&\sim \tilde{Q} + m\tilde{F}_{\tilde{\sf q}} - \tilde{E}_5 - \dots - \tilde{E}_{m+4}, \\
\tilde{L}_4 &\sim \pi^{-1}_{\ast}(B) - \tilde{E}_4 \\
&\sim \tilde{Q} + m\tilde{F}_{\tilde{\sf q}} - \tilde{E}_4 - \tilde{E}_5 - \dots - \tilde{E}_{m+4}.
\end{align*}
Put $\Gamma_{\tilde{\sf q}} := \pi_{\ast}(\tilde{\Gamma}_{\tilde{\sf q}})$, $F_{\tilde{\sf q}} := \pi_{\ast}(\tilde{F}_{\tilde{\sf q}})$, $E_i := \pi_{\ast}(\tilde{E}_i)$ and $E_i' := \pi_{\ast}(\tilde{E}_i')$ for $i=1,\dots ,m+4$. 

Without loss of generality, we may assume that $a_1 \ge a_2 \ge a_3 \ge a_4 (>0)$. We note $1-a_4>0$ because $s=4$. 
Moreover, we write $a_{m+4} := 0$. 

Let $\varepsilon$ be a positive number satisfying $\varepsilon < \min\{a_4,1-a_4\}$, and let $D$ be the effective $\Q$-divisor on $S$ defined by: 
\begin{align*}
D := (1-a_4+\varepsilon)\Gamma_{\tilde{\sf q}} + \sum_{i=1}^3(a_i-a_4+\varepsilon)E_i + \varepsilon L_4 + 2(a_4-\varepsilon)E_4' + \sum_{j=5}^{m+4}\left\{a+a_j+2(a_4-\varepsilon)\right\}E_j'. 
\end{align*}
Since the configuration of the $\bP^1$-fibration $\varphi: \tilde{S} \to \bP^1$ looks like that in Figure \ref{fig(6)}, we know that $S \setminus \Supp(D) \cong \A^1 \times \A^1_{\ast}$ (cf.\ Lemma \ref{lem(3)}). 
Moreover, since $\Gamma_{\tilde{\sf q}} \sim_{\Q} (m+2)F_{\tilde{\sf q}} - \sum_{i=1}^{m+4}E_i$, $L_4 \sim_{\Q} mF_{\tilde{\sf q}} - \sum_{i=4}^{m+4}E_i$, $B \sim_{\Q} mF_{\tilde{\sf q}} -\sum_{i=5}^{m+4}E_i$ and $E_i' \sim_{\Q} F_{\tilde{\sf q}} - E_i$ for $i=4,\dots,m+4$, we have: 
\begin{align*}
D &\sim_{\Q} (1-a_4+\varepsilon) \left\{ (m+2)F_{\tilde{\sf q}} - \sum_{i=1}^3E_i - E_4 - \sum_{j=5}^{m+4}E_j \right\} \\
&\qquad + \sum_{i=1}^3(a_i-a_4+\varepsilon)E_i 
+\varepsilon \left( mF_{\tilde{\sf q}} - E_4 - \sum_{j=5}^{m+4}E_j\right) \\
&\qquad\qquad+ 2(a_4-\varepsilon)(F_{\tilde{\sf q}} -E_4) + \sum_{j=5}^{m+4}\left\{a+2(a_4-\varepsilon)\right\} (F_{\tilde{\sf q}} - E_j) + \sum_{j=5}^{m+3}a_jE_j' \\
&\sim_{\Q} \left\{ (m+2)F_{\tilde{\sf q}} - \sum_{i=1}^3E_i - E_4 - \sum_{j=5}^{m+4}E_j \right\} + a \left( mF_{\tilde{\sf q}} - \sum_{j=5}^{m+4}E_j\right) \\
&\qquad+ \sum_{i=1}^3a_iE_i + a_4\left( mF_{\tilde{\sf q}} - E_4 - \sum_{j=5}^{m+4}E_j \right) + \sum_{j=5}^{m+3}a_jE_j' \\
&\sim_{\Q} -K_S + aB + \sum_{i=1}^3a_iL_i + a_4L_4 + \sum_{j=5}^{m+3}a_jL_j \\
&\sim_{\Q} H. 
\end{align*}
Hence, $S \setminus \Supp(D)$ is an $H$-polar cylinder. 
\medskip

The proof of Theorem \ref{thm: main} (2) is thus completed. 

\begin{ack}
{\rm The authors would like to thank the anonymous referee for the careful reading of this article and many useful comments. 
The second author was supported by JSPS KAKENHI Grant Numbers JP24K22823 and JP25K17222. The first author was supported by the National Research Foundation of Korea under Grant number RS-2025-00513064, while the third author was supported under Grant numbers RS-2025-00513064 and RS-2026-25506097.}
\end{ack}

\end{document}